\def\phi{\varphi}
\def\rho{\varrho}
\def\epsilon{\varepsilon}
\numberwithin{equation}{section}
\theoremstyle{plain}
\newtheorem{theorem}[equation]{Theorem}
\newtheorem{lemma}[equation]{Lemma}
\newtheorem{proposition}[equation]{Proposition}
\newtheorem{corollary}[equation]{Corollary}
\theoremstyle{definition}
\newtheorem{definition}[equation]{Definition}
\theoremstyle{remark}
\newtheorem{remark}[equation]{Remark}
\newtheorem{example}[equation]{Example}
\renewcommand{\leq}{\leqslant}
\renewcommand{\geq}{\geqslant}
\begin{document}
\title[Weigthed Triebel-Lizorkin spaces]{Duality of Triebel-Lizorkin spaces
of general weights}
\author[D. Drihem]{Douadi Drihem}
\address{Douadi Drihem\\
 Laboratory of Functional Analysis and Geometry of Spaces\\
Faculty of Mathematics and Informatics\\
Department of Mathematics\\
M'sila University\\
PO Box 166 Ichebelia, 28000 M'sila, Algeria.}
\email{douadidr@yahoo.fr, douadi.drihem@univ-msila.dz}
\thanks{ }
\date{\today }
\subjclass[2010]{ Primary: 42B25, 42B35; secondary: 46E35.}

\begin{abstract}
In this paper, we identify the duals of Triebel-Lizorkin spaces of
generalized smoothness. In some particular cases these function spaces are
just weighted Triebel-Lizorkin spaces. To do these, we will be working at
the level of sequence spaces. The $\varphi $-transform characterization of
these function spaces in the sense of Frazier and Jawerth, and new weighted
version of vector-valued maximal inequality of Fefferman and Stein are the
main tools.
\end{abstract}

\keywords{ Triebel-Lizorkin space, duality, Muckenhoupt class.}
\maketitle

\section{Introduction}

It is well-known that duality is an important concept when we study function
spaces. It applied to real interpolation and embeddings. For classical
function spaces such as Lebesgue spaces, Sobolev spaces,\ Hardy spaces,
Besov and Triebel-Lizorkin spaces are known, see for example \cite{M08}, 
\cite[2.11]{T1} and references therein.

In this direction, the paper attempts to present the duality of
Triebel-Lizorkin spaces of generalized smoothness, were introduced and
studied in \cite{D20} and \cite{D20.1}, which are defined as follows. Select
a Schwartz function $\varphi $ such that 
\begin{equation*}
\text{supp}\mathcal{F}\varphi \subset \big\{\xi :\frac{1}{2}\leq |\xi |\leq 2%
\big\},\quad |\mathcal{F}\varphi (\xi )|\geq c\text{\quad if\quad }\frac{3}{5%
}\leq |\xi |\leq \frac{5}{3}
\end{equation*}%
where $c>0$. Here $\mathcal{F}(\varphi )$ denotes the Fourier transform of $%
\varphi $, defined by%
\begin{equation*}
\mathcal{F}\varphi (\xi ):=(2\pi )^{-n/2}\int_{\mathbb{R}^{n}}e^{-ix\cdot
\xi }\varphi (x)dx,\quad \xi \in \mathbb{R}^{n}.
\end{equation*}

Let%
\begin{equation*}
\mathcal{S}_{\infty }(\mathbb{R}^{n}):=\Big\{\varphi \in \mathcal{S}(\mathbb{%
R}^{n}):\int_{\mathbb{R}^{n}}x^{\beta }\varphi (x)dx=0\text{ for all
multi-indices }\beta \in \mathbb{N}_{0}^{n}\Big\}.
\end{equation*}%
Following Triebel \cite{T1}, we consider $\mathcal{S}_{\infty }(\mathbb{R}%
^{n})$ as a subspace of $\mathcal{S}(\mathbb{R}^{n})$, including the
topology. Let $\mathcal{S}_{\infty }^{\prime }(\mathbb{R}^{n})$ be the
topological dual of $\mathcal{S}_{\infty }(\mathbb{R}^{n})$ and we put $%
\varphi _{k}=2^{kn}\varphi (2^{k}\cdot )$, $k\in \mathbb{Z}$.\textrm{\ }Let $%
0<p<\infty $ and $0<q\leq \infty $. Let $\{t_{k}\}$ be a $p$-admissible
sequence i.e., $t_{k}\in L_{p}^{\mathrm{loc}}(\mathbb{R}^{n})$, $k\in 
\mathbb{Z}$. The Triebel-Lizorkin space\ $\dot{F}_{p,q}(\mathbb{R}%
^{n},\{t_{k}\})$\ is the collection of all $f\in \mathcal{S}_{\infty
}^{\prime }(\mathbb{R}^{n})$\ such that 
\begin{equation*}
\big\|f|\dot{F}_{p,q}(\mathbb{R}^{n},\{t_{k}\})\big\|:=\Big\|\Big(%
\sum\limits_{k=-\infty }^{\infty }t_{k}^{q}|\varphi _{k}\ast f|^{q}\Big)^{%
\frac{1}{q}}|L_{p}(\mathbb{R}^{n})\Big\|<\infty
\end{equation*}%
with the usual modifications if $q=\infty $.

The function spaces $\dot{F}_{p,q}(\mathbb{R}^{n},\{t_{k}\})$ are based on
the weighted class of Tyulenev\ in \cite{Ty15} and \cite{Ty-151} which
introduced a new family of Besov spaces of variable smoothness.

The study of these type of function spaces can be traced back to the late
60s and early 70s. We refer, for instance, to Bownik \cite{M07}, Cobos and
Fernandez \cite{CF88}, Goldman \cite{Go79} and \cite{Go83}, and Kalyabin 
\cite{Ka83}, see also\ Besov \cite{B03} and \cite{B05}, and Kalyabin and
Lizorkin \cite{Kl87}.

The theory of these spaces had a remarkable development in part due to its
usefulness in applications. For instance, they appear in the study of trace
spaces on fractals, see Edmunds and Triebel \cite{ET96} and \cite{ET99},
were they introduced the spaces $B_{p,q}^{s,\Psi }$, where $\Psi $ is a
so-called admissible function, typically of log-type near $0$. For a
complete treatment of these spaces we refer the reader the work of Moura 
\cite{Mo01}. More general function spaces of generalized smoothness can be
found in Farkas and Leopold \cite{FL06}, and reference therein.

Dominguez and Tikhonov in \cite{DT} gave a treatment of function spaces with
logarithmic smoothness (Besov, Sobolev, Triebel-Lizorkin), including various
new characterizations for Besov norms in terms of different, sharp estimates
for Besov norms of derivatives and potential operators (Riesz and Bessel
potentials) in terms of norms of functions themselves and sharp embeddings
between the Besov spaces defined by differences and by Fourier-analytical
decompositions as well as between Besov and Sobolev/Triebel-Lizorkin spaces.

The paper is organized as follows. First we give some preliminaries and
recall some basic facts on the Muckenhoupt classes and the weighted class of
Tyulenev. Also we give some key technical lemmas needed in the proofs of the
main statements. Especially, the weighted version of vector-valued maximal
inequality of Fefferman and Stein. In Section 2, we present some properties
of $\dot{F}_{p,q}(\mathbb{R}^{n},\{t_{k}\})$ spaces. In addition, we
introduce new class of function spaces $\dot{F}_{\infty ,q}(\mathbb{R}%
^{n},\{t_{k}\})$ which unify and generalize the Triebel-Lizorkin spaces $%
\dot{F}_{\infty ,q}(\mathbb{R}^{n},\{2^{ks}\})$. In Section 3, we identify
the duals of $\dot{F}_{p,q}(\mathbb{R}^{n},\{t_{k}\})$ spaces and the most
interesting case is when $p=1.$

\section{Background tools}

In this section we recall some basic facts on Muckenhoupt class and the
weight class $\dot{X}_{\alpha ,\sigma ,p}$. First we make some notation and
conventions.

\subsection{Notation and conventions}

Throughout this paper, we denote by $\mathbb{R}^{n}$ the $n$-dimensional
real Euclidean space, $\mathbb{N}$ the collection of all natural numbers and 
$\mathbb{N}_{0}=\mathbb{N}\cup \{0\}$. The letter $\mathbb{Z}$ stands for
the set of all integer numbers.\ The expression $f\lesssim g$ means that $%
f\leq c\,g$ for some independent constant $c$ (and non-negative functions $f$
and $g$), and $f\approx g$ means $f\lesssim g\lesssim f$. \vskip5pt

For $x\in \mathbb{R}^{n}$ and $r>0$ we denote by $B(x,r)$ the open ball in $%
\mathbb{R}^{n}$ with center $x$ and radius $r$. By supp $f$ we denote the
support of the function $f$, i.e., the closure of its non-zero set. If $%
E\subset {\mathbb{R}^{n}}$ is a measurable set, then $|E|$ stands for the
(Lebesgue) measure of $E$ and $\chi _{E}$ denotes its characteristic
function. By $c$ we denote generic positive constants, which may have
different values at different occurrences. \vskip5pt

A weight is a nonnegative locally integrable function on $\mathbb{R}^{n}$
that takes values in $(0,\infty )$ almost everywhere. For measurable set $%
E\subset \mathbb{R}^{n}$ and a weight $\gamma $, $\gamma (E)$ denotes 
\begin{equation*}
\int_{E}\gamma (x)dx.
\end{equation*}%
Given a measurable set $E\subset \mathbb{R}^{n}$ and $0<p\leq \infty $, we
denote by $L_{p}(E)$ the space of all functions $f:E\rightarrow \mathbb{C}$
equipped with the quasi-norm 
\begin{equation*}
{\big\|}f|L_{p}(E){\big\|}:=\Big(\int_{E}\left\vert f(x)\right\vert ^{p}dx%
\Big)^{1/p}<\infty ,
\end{equation*}%
with $0<p<\infty $ and%
\begin{equation*}
\left\Vert f|L_{\infty }(E)\right\Vert :=\underset{x\in E}{\text{ess-sup}}%
\left\vert f(x)\right\vert <\infty .
\end{equation*}%
For a function $f$ in $L_{1}^{\mathrm{loc}}$, we set 
\begin{equation*}
M_{A}(f):=\frac{1}{|A|}\int_{A}\left\vert f(x)\right\vert dx
\end{equation*}%
for any $A\subset \mathbb{R}^{n}$. Furthermore, we put%
\begin{equation*}
M_{A,p}(f):=\Big(\frac{1}{|A|}\int_{A}\left\vert f(x)\right\vert ^{p}dx\Big)%
^{\frac{1}{p}},
\end{equation*}%
with $0<p<\infty $. Further, given a measurable set $E\subset \mathbb{R}^{n}$
and a weight $\gamma $, we denote the space of all functions $f:\mathbb{R}%
^{n}\rightarrow \mathbb{C}$ with finite quasi-norm 
\begin{equation*}
{\big\|}f|L_{p}(\mathbb{R}^{n},\gamma ){\big\|}={\big\|}f\gamma |L_{p}(%
\mathbb{R}^{n}){\big\|}
\end{equation*}%
by $L_{p}(\mathbb{R}^{n},\gamma )$.

If $1\leq p\leq \infty $ and $\frac{1}{p}+\frac{1}{p^{\prime }}=1$, then $%
p^{\prime }$ is called the conjugate exponent of $p$. Let $0<p,q\leq \infty $%
. The space $L_{p}(\ell _{q})$ is defined to be the set of all sequences $%
\{f_{k}\}$ of functions such that%
\begin{equation*}
\big\|\{f_{k}\}|L_{p}(\ell _{q})\big\|:=\Big\|\Big(\sum_{k=-\infty }^{\infty
}|f_{k}|^{q}\Big)^{\frac{1}{q}}|L_{p}(\mathbb{R}^{n})\Big\|<\infty
\end{equation*}%
with the usual modifications if $q=\infty $ and if $\{t_{k}\}$ is a sequence
of functions then%
\begin{equation*}
\big\|\{f_{k}\}|L_{p}(\ell _{q},\{t_{k}\})\big\|=\big\|\{t_{k}f_{k}\}|L_{p}(%
\ell _{q})\big\|.
\end{equation*}

The symbol $\mathcal{S}(\mathbb{R}^{n})$ is used in place of the set of all
Schwartz functions on $\mathbb{R}^{n}$. In what follows, $Q$\ will denote an
cube in the space $\mathbb{R}^{n}$\ with sides parallel to the coordinate
axes and $l(Q)$\ will denote the side length of the cube $Q$. For $k\in 
\mathbb{Z}$ and $m\in \mathbb{Z}^{n}$, denote by $Q_{k,m}$ the dyadic cube $%
Q_{k,m}:=2^{-k}([0,1)^{n}+m)$. For the collection of all such cubes we use $%
\mathcal{Q}:=\{Q_{k,m}:k\in \mathbb{Z},m\in \mathbb{Z}^{n}\}$.

\subsection{Muckenhoupt weights}

The purpose of this subsection is to review some known properties of\
Muckenhoupt class.

\begin{definition}
Let $1<p<\infty $. We say that a weight $\gamma $ belongs to the Muckenhoupt
class $A_{p}(\mathbb{R}^{n})$ if there exists a constant $C>0$ such that for
every cube $Q$ the following inequality holds 
\begin{equation}
M_{Q}(\gamma )M_{Q,\frac{p^{\prime }}{p}}(\gamma ^{-1})\leq C.
\label{Ap-constant}
\end{equation}
\end{definition}

The smallest constant $C$ for which $\mathrm{\eqref{Ap-constant}}$ holds,
denoted by $A_{p}(\gamma )$. As an example, we can take $\gamma
(x)=|x|^{\alpha },\alpha \in \mathbb{R}$. Then $\gamma \in A_{p}(\mathbb{R}%
^{n})$, $1<p<\infty $, if and only if $-n<\alpha <n(p-1)$.

For $p=1$ we rewrite the above definition in the following way.

\begin{definition}
We say that a weight $\gamma $ belongs to the Muckenhoupt class $A_{1}(%
\mathbb{R}^{n})$ if there exists a constant $C>0$ such that for every cube $%
Q $\ and for a.e.\ $y\in Q$ the following inequality holds 
\begin{equation}
M_{Q}(\gamma )\leq C\gamma (y).  \label{A1-constant}
\end{equation}
\end{definition}

The smallest constant $C$ for which $\mathrm{\eqref{A1-constant}}$ holds,
denoted by $A_{1}(\gamma )$. The above classes have been first studied by
Muckenhoupt\ \cite{Mu72} and use to characterize the boundedness of the
Hardy-Littlewood maximal function on $L^{p}(\gamma )$, see the monographs 
\cite{GR85} and \cite{L. Graf14}\ for a complete account on the theory of
Muckenhoupt weights.

We recall a few basic properties of the class $A_{p}(\mathbb{R}^{n})$
weights, see \cite[Chapter 7]{L. Graf14} and \cite[Chapter 5]{St93}.

\begin{lemma}
\label{Ap-Property}Let $1\leq p<\infty $.\newline
$\mathrm{(i)}$ Let $1<p<\infty $. $\gamma \in A_{p}(\mathbb{R}^{n})$ if and
only if $\gamma ^{1-p^{\prime }}\in A_{p^{\prime }}(\mathbb{R}^{n})$.\newline
$\mathrm{(ii)}$ Let $\gamma \in A_{p}(\mathbb{R}^{n})$. There is $C>0$ such
that for any cube $Q$ and a measurable subset $E\subset Q$%
\begin{equation*}
\Big(\frac{|E|}{|Q|}\Big)^{p-1}M_{Q}(\gamma )\leq CM_{E}(\gamma ).
\end{equation*}%
$\mathrm{(iii)}$ Let $1\leq p<\infty $ and $\gamma \in A_{p}(\mathbb{R}^{n})$%
. Then there exist $\delta \in (0,1)$ and $C>0$ depending only on $n$, $p$,
and $A_{p}(\gamma )$ such that for any cube $Q$ and any measurable subset $S$
of $Q$ we have%
\begin{equation*}
\frac{M_{S}(\gamma )}{M_{Q}(\gamma )}\leq C\Big(\frac{|S|}{|Q|}\Big)^{\delta
-1}.
\end{equation*}%
$\mathrm{(iv)}$ Suppose that $\gamma \in A_{p}(\mathbb{R}^{n})$ for some $%
1<p<\infty $. Then there exists a $1<p_{1}<p<\infty $ such that $\gamma \in
A_{p_{1}}(\mathbb{R}^{n})$.
\end{lemma}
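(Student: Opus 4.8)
All four assertions are classical properties of Muckenhoupt weights, so my plan is to reduce each one to a short argument resting on at most two ingredients: an elementary manipulation of the defining $A_p$ inequality, and the reverse H\"older inequality, which I would invoke as a known deep fact (see \cite[Chapter 7]{L. Graf14}). Throughout I use that, by $p'/p=1/(p-1)$ and $p/p'=p-1$, the condition $\eqref{Ap-constant}$ is equivalent to $M_Q(\gamma)\big(M_Q(\gamma^{-1/(p-1)})\big)^{p-1}\le A_p(\gamma)$.

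For (i) I would simply rewrite this inequality. Setting $\sigma=\gamma^{1-p'}=\gamma^{-1/(p-1)}$ and raising the $A_p$ inequality to the power $1/(p-1)$ produces $M_Q(\sigma)\,M_Q(\gamma)^{1/(p-1)}\le A_p(\gamma)^{1/(p-1)}$. Since $p'-1=1/(p-1)$ and $\sigma^{-1/(p'-1)}=\gamma$, the right factor is exactly $\big(M_Q(\sigma^{-1/(p'-1)})\big)^{p'-1}$, so the displayed bound is precisely the $A_{p'}$ condition for $\sigma$ with $A_{p'}(\sigma)=A_p(\gamma)^{1/(p-1)}$. The reverse implication is the same computation read backwards, so this part is a pure exponent identity.

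For (ii) the plan is to estimate $|E|$ by H\"older's inequality. Writing $1=\gamma^{1/p}\gamma^{-1/p}$ on $E$ and splitting with exponents $p,p'$ gives $|E|\le \gamma(E)^{1/p}\big(\int_Q\gamma^{-1/(p-1)}\big)^{1/p'}$, where I have enlarged the integral of $\gamma^{-1/(p-1)}$ from $E$ to $Q$. The $A_p$ condition, after isolating $\int_Q\gamma^{-1/(p-1)}$ and raising to the power $1/p$, bounds the second factor by $A_p(\gamma)^{1/p}\,|Q|\,\gamma(Q)^{-1/p}$. Substituting and raising to the power $p$ yields $(|E|/|Q|)^{p}\,\gamma(Q)\le A_p(\gamma)\,\gamma(E)$, which is the claimed inequality after expressing both sides through $M_Q$ and $M_E$ (with $C=A_p(\gamma)$).

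Parts (iii) and (iv) both rest on the reverse H\"older inequality, which I regard as the single genuinely hard ingredient; in an expository lemma of this kind I would cite it rather than reprove it (its proof goes through a Calder\'on--Zygmund decomposition). For (iii) I would pick the exponent $r>1$ from the reverse H\"older inequality for $\gamma$ and, for $S\subset Q$, estimate $\gamma(S)\le\big(\int_Q\gamma^r\big)^{1/r}|S|^{1/r'}$ by H\"older; replacing $\big(\int_Q\gamma^r\big)^{1/r}$ via reverse H\"older and collecting powers gives $\gamma(S)\le C\,\gamma(Q)\,(|S|/|Q|)^{\delta}$ with $\delta=1-1/r\in(0,1)$, which rearranges to the stated quotient bound (this argument covers $p=1$ as well, since $A_1$ weights also satisfy a reverse H\"older inequality). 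For (iv) the plan is to combine (i) with the reverse H\"older inequality for the dual weight $\sigma=\gamma^{1-p'}\in A_{p'}$: choosing $r>1$ from that inequality and setting $p_1-1=(p-1)/r$, so that $1<p_1<p$, one has $\gamma^{-1/(p_1-1)}=\sigma^r$, and reverse H\"older gives $M_Q(\sigma^r)^{(p-1)/r}\le C\,M_Q(\sigma)^{p-1}$. Multiplying by $M_Q(\gamma)$ then bounds the $A_{p_1}$ constant of $\gamma$ by a power of $A_p(\gamma)$, so $\gamma\in A_{p_1}$.
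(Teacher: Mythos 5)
Your proofs are correct. There is, however, nothing in the paper to compare them against: the paper never proves this lemma, but simply records it as a collection of classical facts about Muckenhoupt weights with a pointer to \cite[Chapter 7]{L. Graf14} and \cite[Chapter 5]{St93}. What you have written is essentially the standard derivation contained in those references: parts (i) and (ii) follow by direct manipulation of the defining inequality (your exponent bookkeeping $p'/p=1/(p-1)$, $p/p'=p-1$ is right, and in (ii) the computation does yield $\left(|E|/|Q|\right)^{p}\gamma(Q)\le A_{p}(\gamma)\,\gamma(E)$, i.e.\ the stated bound with $C=A_{p}(\gamma)$), while parts (iii) and (iv) are reduced to the reverse H\"older inequality, which is indeed the one genuinely deep ingredient and is entirely reasonable to cite rather than reprove in this context; your choice $p_{1}-1=(p-1)/r$ in (iv) and the identity $\gamma^{-1/(p_{1}-1)}=\sigma^{r}$ are exactly the classical argument. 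One small point to tighten: the lemma's preamble allows $p=1$ in part (ii), and your H\"older split with the exponent pair $(p,p')$ degenerates there; but in that case the claim reads $M_{Q}(\gamma)\le CM_{E}(\gamma)$, which follows at once by averaging the pointwise $A_{1}$ inequality $M_{Q}(\gamma)\le C\gamma(y)$ over $y\in E$, so this costs one extra sentence. You did handle the analogous issue in (iii) correctly by noting that $A_{1}$ weights also satisfy a reverse H\"older inequality. In short: the proposal is a complete, self-contained proof (modulo the cited reverse H\"older inequality) of a statement the paper leaves entirely to the literature.
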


\subsection{The weight class $\dot{X}_{\protect\alpha ,\protect\sigma ,p}$}

Let $0<p\leq \infty $. A weight sequence $\{t_{k}\}$ is called $p$%
-admissible if $t_{k}\in L_{p}^{\mathrm{loc}}(\mathbb{R}^{n})$ for all $k\in 
\mathbb{Z}$. We mention here that 
\begin{equation*}
\int_{E}t_{k}^{p}(x)dx<c(k)
\end{equation*}%
for any $k\in \mathbb{Z}$ and any compact set $E\subset \mathbb{R}^{n}$. For
a $p$-admissible weight sequence $\{t_{k}\}$\ we set%
\begin{equation*}
t_{k,m,p}:=\big\|t_{k}|L_{p}(Q_{k,m})\big\|,\quad k\in \mathbb{N}_{0},m\in 
\mathbb{Z}^{n}.
\end{equation*}

Tyulenev\ in \cite{Ty15} and \cite{Ty-151} introduced the following new
weighted class\ and use it to study Besov spaces of variable smoothness.

\begin{definition}
\label{Tyulenev-class}Let $\alpha _{1}$, $\alpha _{2}\in \mathbb{R}$, $%
p,\sigma _{1}$, $\sigma _{2}$ $\in (0,+\infty ]$, $\alpha =(\alpha
_{1},\alpha _{2})$ and let $\sigma =(\sigma _{1},\sigma _{2})$. We let $\dot{%
X}_{\alpha ,\sigma ,p}=\dot{X}_{\alpha ,\sigma ,p}(\mathbb{R}^{n})$ denote
the set of $p$-admissible weight sequences $\{t_{k}\}$ satisfying the
following conditions. There exist numbers $C_{1},C_{2}>0$ such that for any $%
k\leq j$\ and every cube $Q,$%
\begin{eqnarray}
M_{Q,p}(t_{k})M_{Q,\sigma _{1}}(t_{j}^{-1}) &\leq &C_{1}2^{\alpha _{1}(k-j)},
\label{Asum1} \\
M_{Q,p}^{-1}(t_{k})M_{Q,\sigma _{2}}(t_{j}) &\leq &C_{2}2^{\alpha _{2}(j-k)}.
\label{Asum2}
\end{eqnarray}
\end{definition}

The constants $C_{1},C_{2}>0$ are independent of both the indexes $k$ and $j$%
.

\begin{remark}
$\mathrm{(i)}$\ We would like to mention that if $\{t_{k}\}$ satisfying $%
\mathrm{\eqref{Asum1}}$ with $\sigma _{1}=r\left( \frac{p}{r}\right)
^{\prime }$ and $0<r<p\leq \infty $, then $t_{k}^{p}\in A_{\frac{p}{r}}(%
\mathbb{R}^{n})$ for any $k\in \mathbb{Z}$.\newline
$\mathrm{(ii)}$ We say that $t_{k}\in A_{p}(\mathbb{R}^{n})$,\ $k\in \mathbb{%
Z}$, $1<p<\infty $ have the same Muckenhoupt constant if%
\begin{equation*}
A_{p}(t_{k})=c,\quad k\in \mathbb{Z},
\end{equation*}%
where $c$ is independent of $k$.\newline
$\mathrm{(iii)}$ Definition \ref{Tyulenev-class} is different from the one
used in \cite[Definition 2.7]{Ty15}, because we used the boundedness of the
maximal function on weighted Lebesgue spaces.
\end{remark}

\begin{example}
\label{Example1}Let $0<r<p<\infty $, a weight $\omega ^{p}\in A_{\frac{p}{r}%
}(\mathbb{R}^{n})$ and $\{s_{k}\}=\{2^{ks}\omega ^{p}\}_{k\in \mathbb{Z}}$, $%
s\in \mathbb{R}$. Clearly, $\{s_{k}\}_{k\in \mathbb{Z}}$ lies in $\dot{X}%
_{\alpha ,\sigma ,p}$ for $\alpha _{1}=\alpha _{2}=s$, $\sigma =(r(\frac{p}{r%
})^{\prime },p)$.
\end{example}

\begin{remark}
\label{Tyulenev-class-properties}Let $0<\theta \leq p\leq \infty $. Let $%
\alpha _{1}$, $\alpha _{2}\in \mathbb{R}$, $\sigma _{1},\sigma _{2}\in
(0,+\infty ]$, $\sigma _{2}\geq p$, $\alpha =(\alpha _{1},\alpha _{2})$ and
let $\sigma =(\sigma _{1}=\theta \left( \frac{p}{\theta }\right) ^{\prime
},\sigma _{2})$. Let a $p$-admissible weight sequence $\{t_{k}\}\in \dot{X}%
_{\alpha ,\sigma ,p}$. Then $\alpha _{2}\geq \alpha _{1}$, see \cite{D20}.
\end{remark}

As usual, we put%
\begin{equation*}
\mathcal{M(}f)(x):=\sup_{Q}\frac{1}{|Q|}\int_{Q}\left\vert f(y)\right\vert
dy,\quad f\in L_{1}^{\mathrm{loc}}(\mathbb{R}^{n}),
\end{equation*}%
where the supremum\ is taken over all cubes with sides parallel to the axis
and $x\in Q$. Also we set 
\begin{equation*}
\mathcal{M}_{\sigma }(f):=\left( \mathcal{M(}\left\vert f\right\vert
^{\sigma })\right) ^{\frac{1}{\sigma }},\quad 0<\sigma <\infty .
\end{equation*}

In several situations we will use the following important tool, see \cite%
{D20}.

\begin{lemma}
\label{key-estimate1}Let $1<\theta \leq p<\infty $. Let $\{t_{k}\}$\ be a $p$%
-admissible\ weight\ sequence\ such that $t_{k}^{p}\in A_{\frac{p}{\theta }}(%
\mathbb{R}^{n})$, $k\in \mathbb{Z}$. Assume that $t_{k}^{p}$,\ $k\in \mathbb{%
Z}$ have the same Muckenhoupt constant, $A_{\frac{p}{\theta }%
}(t_{k}^{p})=c,k\in \mathbb{Z}$. Then%
\begin{equation}
\big\|\mathcal{M(}f_{k})|L_{p}(\mathbb{R}^{n},t_{k})\big\|\leq c\big\|%
f_{k}|L_{p}(\mathbb{R}^{n},t_{k})\big\|  \label{key-est.b1}
\end{equation}%
holds for all sequences $f_{k}\in L_{p}(\mathbb{R}^{n},t_{k})$, $k\in 
\mathbb{Z}$, where $c>0$ is independent of $k$.
\end{lemma}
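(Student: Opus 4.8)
The plan is to read \eqref{key-est.b1} as a uniform weighted maximal inequality and to deduce it from the quantitative Muckenhoupt theorem. Writing the quasi-norm explicitly, $\|g|L_{p}(\mathbb{R}^{n},t_{k})\|^{p}=\int_{\mathbb{R}^{n}}|g(x)|^{p}t_{k}(x)^{p}\,dx$, so that \eqref{key-est.b1} asserts precisely that the Hardy--Littlewood maximal operator $\mathcal{M}$ is bounded on the weighted space $L_{p}(\mathbb{R}^{n},w_{k}\,dx)$, where $w_{k}:=t_{k}^{p}$, with an operator norm that does not depend on $k$. Everything therefore reduces to Muckenhoupt's theorem plus a careful accounting of constants.

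First I would put the hypothesis in the form needed for $L_{p}$. Set $s:=p/\theta\in[1,p)$; by assumption $w_{k}\in A_{s}(\mathbb{R}^{n})$ with $A_{s}(w_{k})=c$ for every $k$. Since $p>1$ and the Muckenhoupt classes are nested, it suffices to know that $w_{k}\in A_{p}(\mathbb{R}^{n})$ with a uniform constant; this follows because the constant in \eqref{Ap-constant} is non-increasing in the exponent, which in turn is a consequence of the power-mean (H\"older) inequality applied to the negative-power average of $w_{k}$. Hence $A_{p}(w_{k})\le A_{s}(w_{k})=c$, uniformly in $k$. Note that the conclusion only needs membership in $A_{p}$, so the hypothesis $w_{k}\in A_{p/\theta}$ carries some slack; its role is to supply the single uniform number $c$ that will control the final constant.

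Next I would prove the boundedness quantitatively, so that the bound depends only on $n$, $p$ and $A_{p}(w_{k})$. Using Lemma~\ref{Ap-Property}(iv) I would pass to an exponent $1<p_{1}<p$ with $w_{k}\in A_{p_{1}}(\mathbb{R}^{n})$, where $p_{1}$ and $A_{p_{1}}(w_{k})$ may be chosen independent of $k$ since they depend only on $n$, $p$ and $A_{p}(w_{k})\le c$. From the $A_{p_{1}}$ condition a Vitali-type covering argument gives the weak-type estimate $w_{k}(\{\mathcal{M}f>\lambda\})\le C\lambda^{-p_{1}}\int_{\mathbb{R}^{n}}|f|^{p_{1}}w_{k}\,dx$ with $C=C(n,p_{1},A_{p_{1}}(w_{k}))$; combining this with the trivial endpoint $\|\mathcal{M}f|L_{\infty}(\mathbb{R}^{n})\|\le\|f|L_{\infty}(\mathbb{R}^{n})\|$ and applying Marcinkiewicz interpolation between the weak $(p_{1},p_{1})$ and the $(\infty,\infty)$ endpoints yields $\int_{\mathbb{R}^{n}}(\mathcal{M}f)^{p}w_{k}\,dx\le C\int_{\mathbb{R}^{n}}|f|^{p}w_{k}\,dx$ for $p>p_{1}$, with $C$ depending only on $n$, $p$, $p_{1}$ and $A_{p_{1}}(w_{k})$.

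Finally I would take $f=f_{k}$ and rewrite $\int_{\mathbb{R}^{n}}(\mathcal{M}f_{k})^{p}t_{k}^{p}\,dx=\|\mathcal{M}(f_{k})|L_{p}(\mathbb{R}^{n},t_{k})\|^{p}$ and similarly on the right, which is exactly \eqref{key-est.b1}. The substance of the argument is not the boundedness of $\mathcal{M}$, which is classical Muckenhoupt theory, but the uniformity: the hard part will be verifying that the self-improvement exponent $p_{1}$ from Lemma~\ref{Ap-Property}(iv), the reverse-H\"older and weak-type constants, and the interpolation constant are all governed by $n$, $p$ and the single quantity $c=A_{p/\theta}(w_{k})$, and never by the index $k$ itself. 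Once that bookkeeping is in place, the constant in \eqref{key-est.b1} is automatically independent of $k$, as required.
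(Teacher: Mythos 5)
Your proposal is correct and is essentially the argument the paper itself relies on: the paper states Lemma \ref{key-estimate1} without proof, citing \cite{D20} and noting in Remark \ref{r-estimates}/(iii) that the fixed-weight case is Muckenhoupt's theorem \cite{Mu72}, whose constant depends only on $n$, $p$ and the $A_p$ constant of the weight --- precisely the uniformity-in-$k$ bookkeeping that you carry out. One minor streamlining: since $\theta>1$ forces $p/\theta<p$ outright, you could interpolate directly between the weak-type $\left(p/\theta,\,p/\theta\right)$ estimate for $A_{p/\theta}$ weights and the trivial $L_\infty$ bound, which makes your detour through $A_p$ and the openness property of Lemma \ref{Ap-Property}/(iv) (whose quantitative, $k$-independent form you would otherwise have to justify via reverse H\"older) unnecessary.
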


\begin{remark}
\label{r-estimates}$\mathrm{(i)}$ We would like to mention that the result
of this lemma is true if we assume that $t_{k}^{p}\in A_{\frac{p}{\theta }}(%
\mathbb{R}^{n})$,\ $k\in \mathbb{Z}$, $1<p<\infty $ with $A_{\frac{p}{\theta 
}}(t_{k}^{p})\leq c$, $k\in \mathbb{Z}$, where $c>0$ independent of $k$. $%
\newline
\mathrm{(ii)}$ The property $\mathrm{\eqref{key-est.b1}}$ can be generalized
in the following way. Let $1<\theta \leq p<\infty $ and $\{t_{k}\}$ be a $p$%
-admissible sequence such that $t_{k}^{p}\in A_{\frac{p}{\theta }}(\mathbb{R}%
^{n})$, $k\in \mathbb{Z}$. \newline
$\mathrm{\bullet }$ If $t_{k}^{p}$,\ $k\in \mathbb{Z}$ satisfying $\mathrm{%
\eqref{Asum1}}$, then%
\begin{equation*}
\big\|\mathcal{M(}f_{j})|L_{p}(\mathbb{R}^{n},t_{k})\big\|\leq c\text{ }%
2^{\alpha _{1}(k-j)}\big\|f_{j}|L_{p}(\mathbb{R}^{n},t_{j})\big\|
\end{equation*}%
holds for all sequence of functions $f_{j}\in L_{p}(\mathbb{R}^{n},t_{j})$, $%
j\in \mathbb{Z}$ and $j\geq k$, where $c>0$ is independent of $k$ and $j$.%
\newline
$\mathrm{\bullet }$ If $t_{k}^{p}$,\ $k\in \mathbb{Z}$ satisfying $\mathrm{%
\eqref{Asum2}}$ with $\sigma _{2}\geq p$, then%
\begin{equation*}
\big\|\mathcal{M(}f_{j})|L_{p}(\mathbb{R}^{n},t_{k})\big\|\leq c\text{ }%
2^{\alpha _{2}(k-j)}\big\|f_{j}|L_{p}(\mathbb{R}^{n},t_{j})\big\|
\end{equation*}%
holds for all sequence of functions $f_{j}\in L_{p}(\mathbb{R}^{n},t_{j})$, $%
j\in \mathbb{Z}$ and $k\geq j$, where $c>0$ is independent of $k$ and $j$.$%
\newline
\mathrm{(iii)}$ A proof of this result\ for $t_{k}^{p}=\omega $, $k\in 
\mathbb{Z}$ may be found in \cite{Mu72}.$\newline
\mathrm{(iv)}$ In view of Lemma \ref{Ap-Property}/(iv) we can assume that $%
t_{k}^{p}\in A_{p}(\mathbb{R}^{n})$,\ $k\in \mathbb{Z}$, $1<p<\infty $ with $%
A_{p}(t_{k}^{p})\leq c$, $k\in \mathbb{Z}$, where $c>0$ independent of $k$.
\end{remark}

We state one of the main tools of this paper, see \cite{D20.1}.

\begin{lemma}
\label{FS-inequality}Let $1<\theta \leq p<\infty $\ and $1<q<\infty $. Let $%
\{t_{k}\}$\ be a $p$-admissible\ weight\ sequence\ such that $t_{k}^{p}\in
A_{\frac{p}{\theta }}(\mathbb{R}^{n})$, $k\in \mathbb{Z}$. Assume that $%
t_{k}^{p}$,\ $k\in \mathbb{Z}$ have the same Muckenhoupt constant, $A_{\frac{%
p}{\theta }}(t_{k}^{p})=c,k\in \mathbb{Z}$. Then%
\begin{equation*}
\Big\|\Big(\sum\limits_{k=-\infty }^{\infty }t_{k}^{q}\big(\mathcal{M(}f_{k})%
\big)^{q}\Big)^{\frac{1}{q}}|L_{p}(\mathbb{R}^{n})\Big\|\lesssim \Big\|\Big(%
\sum\limits_{k=-\infty }^{\infty }t_{k}^{q}\left\vert f_{k}\right\vert ^{q}%
\Big)^{\frac{1}{q}}|L_{p}(\mathbb{R}^{n})\Big\|
\end{equation*}%
holds for all sequences of functions $\{f_{k}\}\in L_{p}(\ell _{q})$.
\end{lemma}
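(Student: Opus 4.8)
The plan is to view the asserted estimate as a weighted, vector-valued Fefferman--Stein inequality and to prove it by the classical duality scheme of Fefferman--Stein and Andersen--John, feeding in the scalar weighted bounds as the engine. Two uniform-in-$k$ scalar facts are available. First, Lemma~\ref{key-estimate1} together with Remark~\ref{r-estimates} gives $\|\mathcal{M}(f)|L_p(\mathbb{R}^n,t_k)\|\le c\|f|L_p(\mathbb{R}^n,t_k)\|$ with $c$ independent of $k$, since $t_k^p\in A_{p/\theta}(\mathbb{R}^n)$ with a common constant. Second, because $t_k^p\in A_p(\mathbb{R}^n)$, Lemma~\ref{Ap-Property}/(i) yields $t_k^{-p'}=(t_k^p)^{1-p'}\in A_{p'}(\mathbb{R}^n)$ with a constant controlled by that of $t_k^p$, and Lemma~\ref{Ap-Property}/(iv) places it in some $A_{p'/\theta'}(\mathbb{R}^n)$; applying Lemma~\ref{key-estimate1} at the exponent $p'$ then gives the dual bound $\|\mathcal{M}(g)|L_{p'}(\mathbb{R}^n,t_k^{-1})\|\le c\|g|L_{p'}(\mathbb{R}^n,t_k^{-1})\|$, again uniformly in $k$.

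The diagonal case $q=p$ is immediate: by Tonelli the left-hand side equals $\big(\sum_k\|\mathcal{M}(f_k)|L_p(\mathbb{R}^n,t_k)\|^p\big)^{1/p}$, which the first scalar bound dominates by $c\big(\sum_k\|f_k|L_p(\mathbb{R}^n,t_k)\|^p\big)^{1/p}$, i.e.\ the right-hand side.

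For $q\ne p$ I would run the duality argument. When $q<p$ I write the left-hand side to the power $q$ as the $L_{p/q}$ norm of $\sum_k(t_k\mathcal{M}(f_k))^q$ and dualize against $0\le u$ with $\|u|L_{(p/q)'}\|\le1$; the task becomes to estimate $\sum_k\int (\mathcal{M} f_k)^q\,t_k^q u\,dx$. When $q>p$ I instead dualize the $L_p(\ell_q)$ norm directly against a sequence $\{h_k\}$ with $\|\{h_k\}|L_{p'}(\ell_{q'})\|\le1$ and linearize the maximal function, replacing each $\mathcal{M}(f_k)$ by a measurable averaging operator $A_k$ with $0\le A_k\le\mathcal{M}$; moving $A_k$ to its adjoint $A_k^*$ and threading the factor $t_k$ to the side dictated by H\"older (in the sequence variable via $\ell_q$--$\ell_{q'}$, then in space via $L_p$--$L_{p'}$) reduces matters to the dual scalar bound for $A_k^*$ on $L_{p'}(\mathbb{R}^n,t_k^{-1})$, which follows from the second uniform bound above. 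The two off-diagonal cases are related by this duality, so it is enough to carry out one direction in full.

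The main obstacle is that the weight $t_k$ depends on $k$, so one cannot simply invoke the classical single-weight Fefferman--Stein or Andersen--John inequality, nor can one pull $t_k^q$ out of $\mathcal{M}(t_k^q u)$ pointwise (that would require $t_k^q\in A_1(\mathbb{R}^n)$, which is not assumed). The hypothesis that all $t_k^p$ share one Muckenhoupt constant is precisely what lets every scalar estimate above hold with a single constant independent of $k$, and the technical heart is a weighted Fefferman--Stein estimate --- most cleanly phrased relative to the doubling measure $t_k^p\,dx$ (resp.\ $t_k^{-p'}\,dx$) --- that correlates the $k$-indexed terms without detaching the weight from the maximal operator. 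Once this weighted Fefferman--Stein step is in place with $k$-uniform constants, assembling the H\"older estimates yields the claim.
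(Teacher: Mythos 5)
The paper itself offers no proof of Lemma \ref{FS-inequality}: it is quoted as a tool from \cite{D20.1}. So your attempt has to stand on its own, and it does not. The diagonal case $q=p$ is handled correctly (Tonelli plus the uniform scalar bound of Lemma \ref{key-estimate1}), and your two uniform scalar facts, including the dual bound on $L_{p'}(\mathbb{R}^n,t_k^{-1})$ via Lemma \ref{Ap-Property}/(i), are sound. But for $q\neq p$ — which is the entire content of the lemma — you defer what you yourself call "the technical heart": a weighted Fefferman--Stein estimate with $k$-uniform constants that keeps the weight attached to the maximal operator. That deferred statement is essentially the lemma being proven, so the argument is circular. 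To see that the gap is real and not routine: in the case $q<p$, after dualizing against $u\geq 0$ with $\|u|L_{(p/q)'}(\mathbb{R}^n)\|\leq 1$, the classical scheme gives $\int(\mathcal{M}f_k)^q\,t_k^q u\,dx\lesssim\int|f_k|^q\,\mathcal{M}(t_k^qu)\,dx$, and to close via H\"older one needs $\mathcal{M}(t_k^qu)\leq t_k^q\,U$ pointwise for a \emph{single} majorant $U$ with controlled $L_{(p/q)'}$ norm; equivalently one needs the twisted operators $u\mapsto \mathcal{M}(t_k^qu)/t_k^q$ — indeed their supremum over $k$ — to be bounded on $L_{(p/q)'}(\mathbb{R}^n)$. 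Uniform boundedness of each twisted operator on $L_{(p/q)'}$ is equivalent (by Lemma \ref{Ap-Property}/(i)) to $t_k^p\in A_{p/q}(\mathbb{R}^n)$ uniformly, which the hypothesis $t_k^p\in A_{p/\theta}(\mathbb{R}^n)$ gives only when $q\leq\theta$; and even granting it, a supremum of countably many uniformly bounded operators need not be bounded. None of this is addressed in the proposal.

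The claimed symmetry "the two off-diagonal cases are related by duality, so one direction suffices" is also unjustified. Since $\mathcal{M}$ is nonlinear you must linearize, as you indicate, by averaging operators $A_k$; but the adjoints $A_k^{\ast}$ are \emph{not} pointwise dominated by $\mathcal{M}$ (each fixed scale is, but the sum over scales diverges), so the dual problem is a vector-valued estimate for the family $\{A_k^{\ast}\}$ on $L_{p'}(\ell_{q'})$ with the varying weights $t_k^{-1}$ — a problem of exactly the same unresolved type, not one dispatched by the uniform scalar bound on $L_{p'}(\mathbb{R}^n,t_k^{-1})$. In short, you have correctly assembled the uniform scalar ingredients and the trivial diagonal case, and you have accurately located the obstacle (one cannot detach $t_k^q$ from $\mathcal{M}(t_k^qu)$ without an $A_1$-type hypothesis), but locating the obstacle is not overcoming it: the off-diagonal vector-valued inequality with $k$-dependent weights, which is what Lemma \ref{FS-inequality} asserts, is left unproven.
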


\begin{remark}
\label{r-estimates copy(1)}$\mathrm{(i)}$ We would like to mention that the
result of this lemma is true if we assume that $t_{k}^{p}\in A_{\frac{p}{%
\theta }}(\mathbb{R}^{n})$,\ $k\in \mathbb{Z}$, $1<p<\infty $ with $A_{\frac{%
p}{\theta }}(t_{k}^{p})\leq c,k\in \mathbb{Z}$, where $c>0$ independent of $%
k $.$\newline
\mathrm{(ii)}$ In view of Lemma \ref{Ap-Property}/(iv) we can assume that $%
t_{k}^{p}\in A_{p}(\mathbb{R}^{n})$,\ $k\in \mathbb{Z}$, $1<p<\infty $ with $%
A_{p}(t_{k}^{p})\leq c,k\in \mathbb{Z}$, where $c>0$ independent of $k$.
\end{remark}

\section{Function spaces}

In this section we\ present the Fourier analytical definition of
Triebel-Lizorkin spaces of variable smoothness and recall some their
properties. Our goal here is to study the spaces $\dot{F}_{\infty ,q}(%
\mathbb{R}^{n},\{t_{k}\})$ where their basic properties are given in analogy
to the Triebel-Lizorkin spaces $\dot{F}_{\infty ,q}(\mathbb{R}^{n})$. Select
a pair of Schwartz functions $\varphi $ and $\psi $ satisfy%
\begin{equation}
\text{supp}\mathcal{F}\varphi ,\mathcal{F}\psi \subset \big\{\xi :\frac{1}{2}%
\leq |\xi |\leq 2\big\},  \label{Ass1}
\end{equation}%
\begin{equation}
|\mathcal{F}\varphi (\xi )|,|\mathcal{F}\psi (\xi )|\geq c\quad \text{if}%
\quad \frac{3}{5}\leq |\xi |\leq \frac{5}{3}  \label{Ass2}
\end{equation}%
and 
\begin{equation}
\sum_{k=-\infty }^{\infty }\overline{\mathcal{F}\varphi (2^{-k}\xi )}%
\mathcal{F}\psi (2^{-k}\xi )=1\quad \text{if}\quad \xi \neq 0,  \label{Ass3}
\end{equation}%
where $c>0$. Throughout the paper, for all $k$ $\in \mathbb{Z}$ and $x\in 
\mathbb{R}^{n}$, we put $\varphi _{k}(x):=2^{kn}\varphi (2^{k}x)$ and $%
\tilde{\varphi}(x):=\overline{\varphi (-x)}$. Let $\varphi \in \mathcal{S}(%
\mathbb{R}^{n})$ be a function satisfying $\mathrm{\eqref{Ass1}}$-$\mathrm{%
\eqref{Ass2}}$. We recall that there exists a function $\psi \in \mathcal{S}(%
\mathbb{R}^{n})$ satisfying $\mathrm{\eqref{Ass1}}$-$\mathrm{\eqref{Ass3}}$,
see \cite[Lemma (6.9)]{FrJaWe01}.

We start by recalling the definition of $\dot{F}_{p,q}(\mathbb{R}%
^{n},\{t_{k}\})$ spaces.

\begin{definition}
\label{B-F-def}Let $0<p<\infty $ and $0<q\leq \infty $. Let $\{t_{k}\}$ be a 
$p$-admissible weight sequence, and $\varphi \in \mathcal{S}(\mathbb{R}^{n})$%
\ satisfy $\mathrm{\eqref{Ass1}}$ and $\mathrm{\eqref{Ass2}}$. The
Triebel-Lizorkin space $\dot{F}_{p,q}(\mathbb{R}^{n},\{t_{k}\})$\ is the
collection of all $f\in \mathcal{S}_{\infty }^{\prime }(\mathbb{R}^{n})$\
such that 
\begin{equation*}
\big\|f|\dot{F}_{p,q}(\mathbb{R}^{n},\{t_{k}\})\big\|:=\Big\|\Big(%
\sum\limits_{k=-\infty }^{\infty }t_{k}^{q}|\varphi _{k}\ast f|^{q}\Big)^{%
\frac{1}{q}}|L_{p}(\mathbb{R}^{n})\Big\|<\infty
\end{equation*}%
with the usual modifications if $q=\infty $.
\end{definition}

\begin{remark}
Some properties of these function spaces, such as the $\varphi $-transform
characterization in the sense of Frazier and Jawerth, the smooth atomic\ and
molecular decomposition and the characterization of these function spaces in
terms of the difference relations are given in\ \cite{D20} and \cite{D20.1}.
\end{remark}

As in \cite[Section 5]{FJ90}, we introduce the following function spaces.

\begin{definition}
\label{F-inf-def}Let $0<q<\infty $.\ Let $\{t_{k}\}$ be a $q$-admissible
weight sequence and $\varphi \in \mathcal{S}(\mathbb{R}^{n})$\ satisfy $%
\mathrm{\eqref{Ass1}}$ and $\mathrm{\eqref{Ass2}}$. The Triebel-Lizorkin\
space $\dot{F}_{\infty ,q}(\mathbb{R}^{n},\{t_{k}\})$\ is the collection of
all $f\in \mathcal{S}_{\infty }^{\prime }(\mathbb{R}^{n})$\ such that 
\begin{equation*}
{\big\|}f|\dot{F}_{\infty ,q}(\mathbb{R}^{n},\{t_{k}\}){\big\|}:=\sup_{P\in 
\mathcal{Q}}\Big(\frac{1}{|P|}\int_{P}\sum\limits_{k=-\log _{2}l(P)}^{\infty
}t_{k}^{q}(x)|\varphi _{k}\ast f(x)|^{q}dx\Big)^{\frac{1}{q}}<\infty .
\end{equation*}
\end{definition}

\begin{remark}
We would like to mention that the elements of the above spaces are not
distributions but equivalence classes of distributions$.$
\end{remark}

Using the system $\{\varphi _{k}\}_{k\in \mathbb{Z}}$ we can define the
quasi-norms%
\begin{equation*}
\big\|f|\dot{F}_{p,q}^{s}(\mathbb{R}^{n})\big\|:=\big\|\Big(%
\sum\limits_{k=-\infty }^{\infty }2^{ksq}|\varphi _{k}\ast f|^{q}\Big)^{%
\frac{1}{q}}|L_{p}(\mathbb{R}^{n})\big\|
\end{equation*}%
for constants $s\in \mathbb{R}$ and $0<p,q\leq \infty $. The
Triebel-Lizorkin space $\dot{F}_{p,q}^{s}(\mathbb{R}^{n})$\ consist of all
distributions $f\in \mathcal{S}_{\infty }^{\prime }(\mathbb{R}^{n})$ for
which $\big\|f|\dot{F}_{p,q}^{s}(\mathbb{R}^{n})\big\|<\infty $. Further
details on the classical theory of these spaces can be found in \cite{FJ86}, 
\cite{FJ90}, \cite{FrJaWe01}, \cite{T1} and \cite{T2}.

One recognizes immediately that if $\{t_{k}\}=\{2^{sk}\}$, $s\in \mathbb{R}$%
, then 
\begin{equation*}
\dot{F}_{p,q}(\mathbb{R}^{n},\{2^{sk}\})=\dot{F}_{p,q}^{s}(\mathbb{R}%
^{n})\quad \text{and}\quad \dot{F}_{\infty ,q}(\mathbb{R}^{n},\{2^{sk}\})=%
\dot{F}_{\infty ,q}^{s}(\mathbb{R}^{n}).
\end{equation*}%
Moreover, for $\{t_{k}\}=\{2^{sk}w\}$, $s\in \mathbb{R}$ with a weight $w$
we re-obtain the weighted Triebel-Lizorkin spaces; we refer to the papers 
\cite{Bui82},\ \cite{IzSa12}\ and \cite{Tang} for a comprehensive treatment
of the weighted spaces.

A basic tool to study\ the above\ function\ spaces is the following Calder%
\'{o}n reproducing formula, see \cite[Lemma 2.1]{YY2}.

\begin{lemma}
\label{DW-lemma1}Suppose $\varphi $, $\psi \in \mathcal{S}(\mathbb{R}^{n})$\
satisfying $\mathrm{\eqref{Ass1}}$ through $\mathrm{\eqref{Ass3}}$\textrm{. }%
If\textrm{\ }$f\in \mathcal{S}_{\infty }^{\prime }(\mathbb{R}^{n})$, then%
\begin{equation}
f=\sum_{k=-\infty }^{\infty }2^{-kn}\sum_{m\in \mathbb{Z}^{n}}\widetilde{%
\varphi }_{k}\ast f(2^{-k}m)\psi _{k}(\cdot -2^{-k}m).  \label{proc2}
\end{equation}
\end{lemma}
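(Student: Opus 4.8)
The plan is to derive the discrete identity \eqref{proc2} from its continuous counterpart by exploiting the band-limitedness of all the functions involved. First I would establish the \emph{continuous} Calder\'on reproducing formula
\[
f=\sum_{k=-\infty}^{\infty}\psi_{k}\ast\widetilde{\varphi}_{k}\ast f,
\]
with convergence in $\mathcal{S}_{\infty}^{\prime}(\Rn)$. Taking Fourier transforms, $\mathcal{F}\varphi_{k}(\xi)=\mathcal{F}\varphi(2^{-k}\xi)$ and $\mathcal{F}\widetilde{\varphi}_{k}(\xi)=\overline{\mathcal{F}\varphi(2^{-k}\xi)}$, so the symbol of the operator $\sum_{k}\psi_{k}\ast\widetilde{\varphi}_{k}\ast(\cdot)$ is $\sum_{k}\overline{\mathcal{F}\varphi(2^{-k}\xi)}\,\mathcal{F}\psi(2^{-k}\xi)$, which equals $1$ for $\xi\neq 0$ by \eqref{Ass3}. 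Since we work modulo polynomials in $\mathcal{S}_{\infty}^{\prime}(\Rn)$, the value at $\xi=0$ is irrelevant, and the identity follows once convergence is tested against $\mathcal{S}_{\infty}(\Rn)$.

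Next I would discretize each term. Fix $k$ and set $g_{k}:=\widetilde{\varphi}_{k}\ast f$. By \eqref{Ass1}, both $\mathcal{F}g_{k}$ and $\mathcal{F}\psi_{k}$ are supported in the annulus $\{2^{k-1}\le|\xi|\le 2^{k+1}\}$; moreover $g_{k}$ is a smooth function of at most polynomial growth (Paley--Wiener for distributions), so the pointwise samples $g_{k}(2^{-k}m)$ are well defined. The goal is the exact quadrature identity
\[
(\psi_{k}\ast g_{k})(x)=2^{-kn}\sum_{m\in\Z^{n}}g_{k}(2^{-k}m)\,\psi_{k}(x-2^{-k}m).
\]
I would prove this by applying the Poisson summation formula to $\Phi_{x}(y):=\psi_{k}(x-y)\,g_{k}(y)$ on the lattice $2^{-k}\Z^{n}$, whose dual lattice is $2\pi\,2^{k}\Z^{n}$. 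The right-hand side above is $2^{-kn}\sum_{m}\Phi_{x}(2^{-k}m)$, which Poisson summation expresses as a constant multiple of $\sum_{l\in\Z^{n}}\mathcal{F}\Phi_{x}(2\pi\,2^{k}l)$, and the $l=0$ term is precisely $\int\Phi_{x}(y)\,dy=(\psi_{k}\ast g_{k})(x)$. The crucial point is that $\mathcal{F}\Phi_{x}$ is a convolution of two functions supported in $\{|\xi|\le 2^{k+1}\}$, hence is supported in $\{|\xi|\le 2^{k+2}\}$; since $2\pi\,2^{k}>4\cdot 2^{k}=2^{k+2}$, every nonzero dual-lattice point lies outside this support, so all terms with $l\neq 0$ vanish and the two sides coincide.

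Finally, summing the quadrature identity over $k$ and inserting it into the continuous formula yields \eqref{proc2}. The main obstacle is the convergence bookkeeping: one must justify interchanging the sum over $m$ (and over $k$) with the distributional pairing in $\mathcal{S}_{\infty}^{\prime}(\Rn)$, and verify that Poisson summation is legitimate for $\Phi_{x}$, a product of a Schwartz function with a band-limited function of polynomial growth. These are controlled by the rapid decay of $\psi_{k}$ together with the polynomial growth bound on $g_{k}$, which render the lattice series absolutely convergent and locally uniformly summable in $x$, so that the formal manipulations above are rigorous.
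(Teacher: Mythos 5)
Your argument is correct: establishing the continuous Calder\'on formula from \eqref{Ass3} and then converting each band-limited piece $\psi_k\ast\widetilde{\varphi}_k\ast f$ into an exact sampling sum via Poisson summation on the lattice $2^{-k}\mathbb{Z}^n$ (the spectrum of $\psi_k(x-\cdot)\,(\widetilde{\varphi}_k\ast f)$ lies in $\{|\xi|\le 2^{k+2}\}$, which misses every nonzero dual-lattice point $2\pi 2^{k}l$) is exactly the classical Frazier--Jawerth argument. The paper itself gives no proof of this lemma, quoting it from \cite[Lemma 2.1]{YY2}, and the proof behind that citation is the same one you propose, so your proposal essentially reconstructs the literature proof, with the deferred convergence issues (convergence of the Calder\'on expansion tested against $\mathcal{S}_{\infty}(\mathbb{R}^{n})$, and Poisson summation for a Schwartz-times-polynomial-growth product) being the standard, genuinely routine parts.
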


Let $\varphi $, $\psi \in \mathcal{S}(\mathbb{R}^{n})$ satisfying $\mathrm{%
\eqref{Ass1}}$\ through\ $\mathrm{\eqref{Ass3}}$. Recall that the $\varphi $%
-transform $S_{\varphi }$ is defined by setting $(S_{\varphi
}f)_{k,m}=\langle f,\varphi _{k,m}\rangle $ where $\varphi
_{k,m}(x)=2^{kn/2}\varphi (2^{k}x-m)$, $m\in \mathbb{Z}^{n}$ and $k\in 
\mathbb{Z}$. The inverse $\varphi $-transform $T_{\psi }$ is defined by 
\begin{equation}
T_{\psi }\lambda :=\sum_{k=-\infty }^{\infty }\sum_{m\in \mathbb{Z}%
^{n}}\lambda _{k,m}\psi _{k,m},  \label{inv-phi-tran-serie}
\end{equation}%
where $\lambda =\{\lambda _{k,m}\}_{k\in \mathbb{Z},m\in \mathbb{Z}%
^{n}}\subset \mathbb{C}$, see \cite{FJ90}.

Now we introduce the following sequence spaces.

\begin{definition}
\label{sequence-space}Let $0<p<\infty $ and $0<q\leq \infty $. Let $%
\{t_{k}\} $ be a $p$-admissible weight sequence. Then for all complex valued
sequences $\lambda =\{\lambda _{k,m}\}_{k\in \mathbb{Z},m\in \mathbb{Z}%
^{n}}\subset \mathbb{C}$ we define 
\begin{equation*}
\dot{f}_{p,q}(\mathbb{R}^{n},\{t_{k}\}):=\Big\{\lambda :\big\|\lambda |\dot{f%
}_{p,q}(\mathbb{R}^{n},\{t_{k}\})\big\|<\infty \Big\}
\end{equation*}%
where%
\begin{equation*}
\big\|\lambda |\dot{f}_{p,q}(\mathbb{R}^{n},\{t_{k}\})\big\|:=\Big\|\Big(%
\sum_{k=-\infty }^{\infty }\sum\limits_{m\in \mathbb{Z}^{n}}2^{\frac{knq}{2}%
}t_{k}^{q}|\lambda _{k,m}|^{q}\chi _{k,m}\Big)^{\frac{1}{q}}|L_{p}(\mathbb{R}%
^{n})\Big\|.
\end{equation*}
\end{definition}

Allowing the smoothness $t_{k}$, $k\in \mathbb{Z}$ to vary from point to
point will raise extra difficulties\ to study these function spaces. But by
the following lemma the problem can be reduced to the case of fixed
smoothness, see \cite{D23}.

\begin{proposition}
\label{Equi-norm1} Let $0<\theta \leq p<\infty $, $0<q<\infty $, $0<\delta
\leq 1$ and $\{t_{k}\}$ be a $p$-admissible sequence. Assume that $\{t_{k}\}$
satisfying $\mathrm{\eqref{Asum1}}$ with $\sigma _{1}=\theta \left( \frac{p}{%
\theta }\right) ^{\prime }$ and $j=k$. Then%
\begin{equation*}
\big\|\lambda |\dot{f}_{p,q,\delta }(\mathbb{R}^{n},\{t_{k}\})\big\|^{\ast
}:=\Big\|\Big(\sum_{k=-\infty }^{\infty }\sum\limits_{m\in \mathbb{Z}%
^{n}}2^{knq(\frac{1}{2}+\frac{1}{\delta p})}t_{k,m,\delta p}^{q}|\lambda
_{k,m}|^{q}\chi _{k,m}\Big)^{\frac{1}{q}}|L_{p}(\mathbb{R}^{n})\Big\|,
\end{equation*}%
is an equivalent quasi-norm in $\dot{f}_{p,q}(\mathbb{R}^{n},\{t_{k}\})$,
where%
\begin{equation*}
t_{k,m,\delta p}:=\big\|t_{k}|L_{\delta p}(Q_{k,m})\big\|,\quad k\in \mathbb{%
Z},m\in \mathbb{Z}^{n}.
\end{equation*}
\end{proposition}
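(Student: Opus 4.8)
The plan is to pass to the two ``density functions'' attached to the pointwise and to the averaged weight, and to treat the two inequalities separately. Writing $g_{k}(x):=\sum_{m\in\mathbb{Z}^{n}}2^{kn/2}|\lambda_{k,m}|\chi_{k,m}(x)$, the disjointness of the cubes $\{Q_{k,m}\}_{m}$ at each fixed scale $k$ gives $\big\|\lambda|\dot{f}_{p,q}(\mathbb{R}^{n},\{t_{k}\})\big\|=\big\|(\sum_{k}t_{k}^{q}g_{k}^{q})^{1/q}|L_{p}(\mathbb{R}^{n})\big\|$. Since $|Q_{k,m}|=2^{-kn}$ one has $2^{knq(\frac12+\frac1{\delta p})}t_{k,m,\delta p}^{q}=2^{knq/2}M_{Q_{k,m},\delta p}(t_{k})^{q}$, so setting $w_{k}(x):=\sum_{m}M_{Q_{k,m},\delta p}(t_{k})\chi_{k,m}(x)$ the starred expression equals $\big\|(\sum_{k}w_{k}^{q}g_{k}^{q})^{1/q}|L_{p}(\mathbb{R}^{n})\big\|$. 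Thus everything reduces to comparing the pointwise weight $t_{k}$ with the dyadic average $w_{k}$ inside $L_{p}(\ell_{q})$. By the Remark following Definition \ref{Tyulenev-class}, the hypothesis forces $t_{k}^{p}\in A_{\frac{p}{\theta}}(\mathbb{R}^{n})$ with a constant independent of $k$; in particular $t_{k}^{p}\in A_{\infty}$, and the reverse H\"older (equivalently reverse Jensen) inequality shows that the averages $M_{Q,\delta' p}(t_{k})$, $0<\delta'\le1$, are mutually equivalent, uniformly in the cube $Q$ and in $k$. Hence $w_{k}$ may be replaced by the average for any convenient exponent; I will use this freedom to lower $\delta$ when needed.

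For the inequality $\|\cdot\|^{\ast}\lesssim\|\cdot\|$ I argue through the maximal operator. On each $Q_{k,m}$ the function $g_{k}$ is constant, so for $x\in Q_{k,m}$ we have $w_{k}(x)g_{k}(x)=M_{Q_{k,m},\delta p}(t_{k}g_{k})\le\mathcal{M}_{\delta p}(t_{k}g_{k})(x)$, i.e. $w_{k}g_{k}\le\mathcal{M}_{\delta p}(t_{k}g_{k})$ everywhere. After lowering $\delta$ so that $\delta p<\min(p,q)$ (allowed by the previous paragraph), the classical unweighted vector-valued maximal inequality for $\mathcal{M}_{\delta p}$ — which is just the Fefferman--Stein inequality in $L_{p/(\delta p)}(\ell_{q/(\delta p)})$, both exponents exceeding $1$ — gives $\big\|(\sum_{k}w_{k}^{q}g_{k}^{q})^{1/q}|L_{p}\big\|\lesssim\big\|(\sum_{k}(t_{k}g_{k})^{q})^{1/q}|L_{p}\big\|=\|\cdot\|$.

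The reverse inequality $\|\cdot\|\lesssim\|\cdot\|^{\ast}$ is the main point, because the pointwise weight $t_{k}(x)$ cannot be dominated by its average at a single point, so no direct maximal bound is available and the Muckenhoupt structure must be invoked. I introduce the auxiliary sequence $v_{k}:=w_{k}g_{k}/t_{k}$, which satisfies $t_{k}v_{k}=w_{k}g_{k}$, so that $\big\|(\sum_{k}t_{k}^{q}v_{k}^{q})^{1/q}|L_{p}\big\|=\|\cdot\|^{\ast}$. The key observation is that $\mathcal{M}(v_{k})\gtrsim g_{k}$: for $x\in Q_{k,m}$, averaging over $Q_{k,m}$ yields $\mathcal{M}(v_{k})(x)\ge g_{k}(x)\,M_{Q_{k,m},\delta p}(t_{k})\,M_{Q_{k,m},1}(t_{k}^{-1})$, and by Jensen's inequality $M_{Q,\delta p}(t_{k})\ge\exp(\frac1{|Q|}\int_{Q}\log t_{k})$ while $M_{Q,1}(t_{k}^{-1})\ge\exp(-\frac1{|Q|}\int_{Q}\log t_{k})$, whose product is at least $1$. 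The local integrability of $t_{k}^{-1}$ required here is exactly the control of $M_{Q,\sigma_{1}}(t_{k}^{-1})$ built into \eqref{Asum1}. Consequently $\|\cdot\|\lesssim\big\|(\sum_{k}t_{k}^{q}\mathcal{M}(v_{k})^{q})^{1/q}|L_{p}\big\|$, and now the \emph{weighted} vector-valued maximal inequality, Lemma \ref{FS-inequality}, whose hypotheses hold since $t_{k}^{p}\in A_{\frac{p}{\theta}}$ with uniform constant, bounds this by $\big\|(\sum_{k}t_{k}^{q}v_{k}^{q})^{1/q}|L_{p}\big\|=\|\cdot\|^{\ast}$.

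I expect this last step to be the delicate one, both conceptually — it is the only place where the weight class is genuinely exploited, through Lemma \ref{FS-inequality} — and technically, since that lemma is stated for $1<\theta\le p<\infty$ and $1<q<\infty$ while the Proposition allows small exponents. To cover the full range I would exploit the homogeneity of all the quantities: fixing $r$ with $0<r<\min(\theta,p,q,1)$ and raising everything to the power $r$ replaces $(\theta,p,q)$ by $(\theta/r,p/r,q/r)$, all exceeding $1$; the weight becomes $t_{k}^{r}$, for which $(t_{k}^{r})^{p/r}=t_{k}^{p}\in A_{\frac{p}{\theta}}$, and the averages transform exactly through $M_{Q,s}(t_{k})^{r}=M_{Q,s/r}(t_{k}^{r})$. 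Hence the argument above applies verbatim in the rescaled range and then descends to the original exponents.
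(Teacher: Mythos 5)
The paper does not actually prove Proposition \ref{Equi-norm1}: it is imported from \cite{D23}, so there is no internal proof to compare yours against; judged on its own terms, your argument is correct and stays entirely within the paper's toolkit. The reduction to comparing $t_{k}$ with the dyadic average $w_{k}=\sum_{m}M_{Q_{k,m},\delta p}(t_{k})\chi_{k,m}$ inside $L_{p}(\ell_{q})$ is exact (disjointness of the cubes at each scale makes your identities equalities); the direction $\|\cdot\|^{\ast}\lesssim\|\cdot\|$ via $w_{k}g_{k}\le\mathcal{M}_{\delta p}(t_{k}g_{k})$ and unweighted Fefferman--Stein after lowering $\delta$ so that $\delta p<\min(p,q)$ is sound and, as you note, uses no weight hypothesis; and the converse via $v_{k}=w_{k}g_{k}/t_{k}$, the pointwise bound $g_{k}\le\mathcal{M}(v_{k})$ coming from $M_{Q,\delta p}(t_{k})M_{Q,1}(t_{k}^{-1})\ge1$, and Lemma \ref{FS-inequality} is a clean way to exploit the Muckenhoupt structure. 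The rescaling $t_{k}\mapsto t_{k}^{r}$, $\lambda_{k,m}\mapsto2^{kn(r-1)/2}|\lambda_{k,m}|^{r}$ does map both quasi-norms to their counterparts for $(\theta/r,p/r,q/r)$ and preserves the hypothesis, since $(t_{k}^{r})^{p/r}=t_{k}^{p}$ and $M_{Q,s}(t_{k})^{r}=M_{Q,s/r}(t_{k}^{r})$, so the restriction $1<\theta\le p$, $1<q$ in Lemma \ref{FS-inequality} is harmless. Three minor points, none affecting correctness: (a) the equivalence of the averages $M_{Q,\delta' p}(t_{k})$ does not need $A_{\infty}$ reverse-Jensen theory, since \eqref{Asum1} with $j=k$ gives $M_{Q,p}(t_{k})\le C\,M_{Q,\sigma_{1}}(t_{k}^{-1})^{-1}$ while H\"older gives $1\le M_{Q,b}(t_{k})M_{Q,c}(t_{k}^{-1})$ for all $b,c>0$; this elementary fact also replaces your $\exp/\log$ Jensen step, which as written needs $\log t_{k}\in L_{1}^{\mathrm{loc}}$ (true here, but avoidable); (b) Lemma \ref{FS-inequality} is stated for equal Muckenhoupt constants and for $\{f_{k}\}\in L_{p}(\ell_{q})$, so you should invoke Remark \ref{r-estimates copy(1)}/(i) for the uniform bound $A_{p/\theta}(t_{k}^{p})\le C_{1}^{p}$ and apply the lemma to truncations $\min(v_{k},N)\chi_{B(0,N)}\chi_{\{|k|\le N\}}$ followed by monotone convergence; (c) in the borderline case $\theta=p$ one has $\sigma_{1}=\infty$ and the remark you cite for $t_{k}^{p}\in A_{p/\theta}$ formally requires $\theta<p$, but \eqref{Asum1} with $\sigma_{1}=\infty$ is precisely the $A_{1}$ condition for $t_{k}^{p}$, so nothing breaks.
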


We define $\dot{f}_{\infty ,q}(\mathbb{R}^{n},\{t_{k}\})$, the sequence
space corresponding to\ $\dot{F}_{\infty ,q}(\mathbb{R}^{n},\{t_{k}\})$ as
follows.

\begin{definition}
Let $0<q<\infty $ and $\{t_{k}\}$ be a $q$-admissible sequence. Then for all
complex valued sequences $\lambda =\{\lambda _{k,m}\}_{k\in \mathbb{Z},m\in 
\mathbb{Z}^{n}}\subset \mathbb{C}$ we define 
\begin{equation*}
\dot{f}_{\infty ,q}(\mathbb{R}^{n},\{t_{k}\}):=\Big\{\lambda :{\big\|}%
\lambda |\dot{f}_{\infty ,q}(\mathbb{R}^{n},\{t_{k}\}){\big\|}<\infty \Big\},
\end{equation*}%
where%
\begin{equation}
{\big\|}\lambda |\dot{f}_{\infty ,q}(\mathbb{R}^{n},\{t_{k}\}){\big\|}%
:=\sup_{P\in \mathcal{Q}}\Big(\frac{1}{|P|}\int_{P}\sum\limits_{k=-\log
_{2}l(P)}^{\infty }\sum\limits_{m\in \mathbb{Z}^{n}}2^{\frac{knq}{2}%
}t_{k}^{q}(x)|\lambda _{k,m}|^{q}\chi _{k,m}(x)dx\Big)^{\frac{1}{q}}.
\label{norm}
\end{equation}
\end{definition}

The quasi-norm $\mathrm{\eqref{norm}}$ can be rewritten as follows:

\begin{proposition}
Let $0<q<\infty $. Let $\{t_{k}\}$ be a $q$-admissible sequence. Then 
\begin{equation}
\big\|\lambda |\dot{f}_{\infty ,q}(\mathbb{R}^{n},\{t_{k}\})\big\|%
=\sup_{P\in \mathcal{Q}}\Big(\frac{1}{|P|}\int_{P}\sum\limits_{k=-\log
_{2}l(P)}^{\infty }\sum\limits_{m\in \mathbb{Z}^{n}}2^{knq(\frac{1}{2}+\frac{%
1}{q})}t_{k,m,q}^{q}|\lambda _{k,m}|^{q}\chi _{k,m}(x)dx\Big)^{\frac{1}{q}}.
\label{equi -f-inf-sequence}
\end{equation}
\end{proposition}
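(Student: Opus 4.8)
The plan is to fix a single dyadic cube $P\in\mathcal{Q}$ and to show that the bracketed average appearing in \eqref{norm} and the one appearing in \eqref{equi -f-inf-sequence} are literally equal for that $P$; taking the $q$-th root and the supremum over all $P$ then gives the identity of quasi-norms. The only structural input needed is the nesting property of the dyadic grid, together with the definitional identity $t_{k,m,q}^{q}=\int_{Q_{k,m}}t_{k}^{q}(x)\,dx$.

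First I would fix $P\in\mathcal{Q}$ and set $j:=-\log_{2}l(P)$, so that $l(P)=2^{-j}$ and $P=Q_{j,\ell}$ for a suitable $\ell\in\mathbb{Z}^{n}$. Both expressions sum only over scales $k\geq j$, and for such $k$ one has $l(Q_{k,m})=2^{-k}\leq 2^{-j}=l(P)$. The crucial fact is the dyadic dichotomy: for every $k\geq j$ and every $m\in\mathbb{Z}^{n}$ either $Q_{k,m}\subseteq P$ or $Q_{k,m}\cap P=\emptyset$. Consequently $\int_{P}\chi_{k,m}(x)\,dx=|Q_{k,m}|=2^{-kn}$ when $Q_{k,m}\subseteq P$ and vanishes otherwise, while $\int_{P}t_{k}^{q}(x)\chi_{k,m}(x)\,dx=\int_{Q_{k,m}}t_{k}^{q}(x)\,dx=t_{k,m,q}^{q}$ when $Q_{k,m}\subseteq P$ and vanishes otherwise.

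Next I would evaluate the two inner integrals term by term. In \eqref{norm} the factors $|\lambda_{k,m}|^{q}$ are constant in $x$, so integrating $t_{k}^{q}\chi_{k,m}$ over $P$ gives, by the previous step,
\begin{equation*}
\frac{1}{|P|}\int_{P}\sum_{k\geq j}\sum_{m\in\mathbb{Z}^{n}}2^{\frac{knq}{2}}t_{k}^{q}(x)|\lambda_{k,m}|^{q}\chi_{k,m}(x)\,dx
=\frac{1}{|P|}\sum_{k\geq j}2^{\frac{knq}{2}}\sum_{Q_{k,m}\subseteq P}t_{k,m,q}^{q}|\lambda_{k,m}|^{q}.
\end{equation*}
For the right-hand side of \eqref{equi -f-inf-sequence} the whole factor $2^{knq(\frac{1}{2}+\frac{1}{q})}t_{k,m,q}^{q}|\lambda_{k,m}|^{q}$ is constant in $x$, so integrating $\chi_{k,m}$ over $P$ and using $\int_{P}\chi_{k,m}=2^{-kn}$ for $Q_{k,m}\subseteq P$ gives
\begin{equation*}
\frac{1}{|P|}\sum_{k\geq j}\sum_{Q_{k,m}\subseteq P}2^{knq(\frac{1}{2}+\frac{1}{q})}t_{k,m,q}^{q}|\lambda_{k,m}|^{q}\,2^{-kn}.
\end{equation*}

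Finally I would match exponents. Since $2^{knq(\frac{1}{2}+\frac{1}{q})}\,2^{-kn}=2^{\frac{knq}{2}+kn-kn}=2^{\frac{knq}{2}}$, the two displayed sums coincide term by term over the same index set $\{(k,m):k\geq j,\ Q_{k,m}\subseteq P\}$. Hence the bracketed averages agree for every $P\in\mathcal{Q}$, and passing to the $q$-th root and the supremum over $P$ yields the asserted equality. I do not expect any genuine analytic obstacle: the entire content is the nesting dichotomy, which simultaneously converts the pointwise weight $t_{k}^{q}(x)$ in \eqref{norm} into the cube averages $t_{k,m,q}^{q}$ and absorbs the extra factor $2^{kn}$ hidden in the exponent $knq(\frac{1}{2}+\frac{1}{q})$ against the measure $|Q_{k,m}|=2^{-kn}$ of the nested subcubes. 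The mildest point of care is simply recording that the sum runs over $k\geq j$, so that all relevant cubes at scale $k$ are indeed no larger than $P$ and the dichotomy applies.
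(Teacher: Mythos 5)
Your proof is correct: the dyadic nesting dichotomy for $k\geq -\log_{2}l(P)$, the identity $t_{k,m,q}^{q}=\int_{Q_{k,m}}t_{k}^{q}(x)\,dx$, and the cancellation $2^{knq(\frac{1}{2}+\frac{1}{q})}\cdot 2^{-kn}=2^{\frac{knq}{2}}$ give term-by-term equality of the two averages for every fixed $P$, hence equality of the suprema. The paper states this proposition without proof, and your computation is precisely the intended (and essentially the only) argument, so there is nothing to compare beyond noting that you have supplied the omitted details correctly.
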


\begin{lemma}
\label{convergence}Let $\alpha =(\alpha _{1},\alpha _{2})\in \mathbb{R}%
^{2},0<\theta \leq q<\infty $ and $\{t_{k}\}\in \dot{X}_{\alpha ,\sigma ,q}$
be a $q$-admissible weight sequence with $\sigma =(\sigma _{1}=\theta \left( 
\frac{q}{\theta }\right) ^{\prime },\sigma _{2}\geq q)$. Let\ $\psi \in 
\mathcal{S}(\mathbb{R}^{n})$ satisfying $\mathrm{\eqref{Ass1}}$ and $\mathrm{%
\eqref{Ass2}}$. Then for all $\lambda \in \dot{f}_{\infty ,q}(\mathbb{R}%
^{n},\{t_{k}\})$%
\begin{equation*}
T_{\psi }\lambda :=\sum_{k=-\infty }^{\infty }\sum_{m\in \mathbb{Z}%
^{n}}\lambda _{k,m}\psi _{v,m},
\end{equation*}%
converges in $\mathcal{S}_{\infty }^{\prime }(\mathbb{R}^{n})$; moreover, $%
T_{\psi }:\dot{f}_{\infty ,q}(\mathbb{R}^{n},\{t_{k}\})\rightarrow \mathcal{S%
}_{\infty }^{\prime }(\mathbb{R}^{n})$ is continuous.
\end{lemma}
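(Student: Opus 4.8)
The plan is to establish both assertions simultaneously by reducing everything to a scalar estimate tested against an arbitrary $\gamma\in\mathcal S_\infty(\mathbb R^n)$. Concretely, I would show that the double series $\sum_{k,m}\lambda_{k,m}\langle\psi_{k,m},\gamma\rangle$ converges absolutely, with
$$\sum_{k=-\infty}^\infty\sum_{m\in\mathbb Z^n}|\lambda_{k,m}|\,|\langle\psi_{k,m},\gamma\rangle|\lesssim\big\|\lambda|\dot f_{\infty,q}(\mathbb R^n,\{t_k\})\big\|\,p_N(\gamma),$$
where $p_N$ is a finite combination of Schwartz seminorms of $\gamma$ and the implied constant is independent of $\lambda$ and $\gamma$. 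Once this is in hand, for each $\gamma$ the finite partial sums of $T_\psi\lambda$ converge, so they converge in $\mathcal S_\infty'(\mathbb R^n)$ in the weak-$\ast$ sense; the limit $T_\psi\lambda$ acts by $\langle T_\psi\lambda,\gamma\rangle=\sum_{k,m}\lambda_{k,m}\langle\psi_{k,m},\gamma\rangle$, and the displayed bound is precisely the continuity of $T_\psi:\dot f_{\infty,q}(\mathbb R^n,\{t_k\})\to\mathcal S_\infty'(\mathbb R^n)$.

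Two elementary ingredients feed this estimate. First, testing the quasi-norm of Definition~\ref{sequence-space} (the $\infty$-variant) on the single cube $P=Q_{k,m}$, where the sum starts at $-\log_2 l(P)=k$ and only the term $k'=k$, $m'=m$ is retained (all others being nonnegative), gives the pointwise control
$$2^{kn/2}|\lambda_{k,m}|\,M_{Q_{k,m},q}(t_k)\le\big\|\lambda|\dot f_{\infty,q}(\mathbb R^n,\{t_k\})\big\|.$$
Second, since $\operatorname{supp}\mathcal F\psi$ is bounded away from the origin, $\psi$ has all vanishing moments, and $\gamma\in\mathcal S_\infty$ has all vanishing moments as well; a routine Taylor expansion (of $\gamma$ when $k\ge0$ and of $\psi$ when $k\le0$, killing the polynomial part by the respective moment conditions) yields, for all $N,L>0$,
$$|\langle\psi_{k,m},\gamma\rangle|\lesssim_{N,L}2^{-kn/2}2^{-kN}(1+|2^{-k}m|)^{-L}\ (k\ge0),\qquad|\langle\psi_{k,m},\gamma\rangle|\lesssim_{N,L}2^{kn/2}2^{kN}(1+|2^{-k}m|)^{-L}\ (k\le0),$$
with constants governed by finitely many seminorms of $\gamma$.

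The hard part is to turn the variable-smoothness factor $M_{Q_{k,m},q}(t_k)^{-1}$ into something summable, and this is exactly where membership in $\dot X_{\alpha,\sigma,q}$ enters. Comparing the smoothness at level $k$ with the fixed weight $t_0$ I expect, uniformly in $m$,
$$M_{Q_{k,m},q}(t_k)^{-1}\lesssim 2^{-\alpha_1 k}M_{Q_{k,m},q}(t_0)^{-1}\ (k\ge0),\qquad M_{Q_{k,m},q}(t_k)^{-1}\lesssim 2^{-\alpha_2 k}M_{Q_{k,m},q}(t_0)^{-1}\ (k\le0).$$
For $k\ge0$ this comes from \eqref{Asum1} with indices $0\le k$ on $Q_{k,m}$, combined with the elementary bound $M_{Q,q}(t_k)M_{Q,\sigma_1}(t_k^{-1})\ge1$ (Jensen); for $k\le0$ it follows directly from \eqref{Asum2} with indices $k\le0$ and $\sigma_2\ge q$. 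Since $\sigma_1=\theta(q/\theta)'$, the remark following Definition~\ref{Tyulenev-class} gives $w:=t_0^q\in A_{q/\theta}(\mathbb R^n)$. Choosing a cube $Q^\ast$ containing both $Q_{k,m}$ and the unit cube $Q_{0,0}$, applying Lemma~\ref{Ap-Property}/(ii) to $Q_{k,m}\subset Q^\ast$ and using positivity of $w$ on $Q_{0,0}\subset Q^\ast$, I can bound $M_{Q_{k,m},q}(t_0)^{-1}$ by a fixed constant (depending only on $w$ and $Q_{0,0}$) times a power of the relevant scales, hence at most exponentially in $|k|$ and polynomially in $(1+|2^{-k}m|)$.

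Finally I would assemble the three estimates. The factors $2^{-kn/2}\cdot 2^{\mp kn/2}$ from the coefficient bound and the pairing bound cancel; the controlled growth of $M_{Q_{k,m},q}(t_0)^{-1}$ is absorbed by first choosing $L$ large (so that $\sum_m(1+|2^{-k}m|)^{-L+\cdots}\lesssim 2^{kn}$ for $k\ge0$ and $\lesssim 1$ for $k\le0$) and then choosing $N$ large (so that the remaining sum over $k$, carrying the fixed exponents $\pm\alpha_i$ and the scale powers, becomes geometric on each of $k\ge0$ and $k\le0$). This delivers absolute convergence together with the required seminorm bound. The only genuinely delicate step is the weight reduction of the third paragraph; the rest is bookkeeping split over the two ranges $k\ge0$ and $k\le0$.
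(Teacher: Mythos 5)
Your proposal is correct, but it takes a genuinely different route from the paper's proof. The paper also reduces the lemma to absolute convergence of $\sum_{k,m}\lambda_{k,m}\langle\psi_{k,m},\gamma\rangle$ for $\gamma\in\mathcal{S}_{\infty}(\mathbb{R}^{n})$ and uses the same almost-orthogonality decay, but it exploits the $\dot{f}_{\infty,q}$ norm \emph{blockwise}: it splits $m$ into dyadic annuli $\Omega_{j}$, passes from $\ell_{1}$ to $\ell_{\varrho}$ with $\frac{1}{\varrho}=\frac{1}{\tau}+\frac{1}{\sigma_{1}}$, peels off the weight by H\"older as a factor $M_{B,\sigma_{1}}(t_{k}^{-1})$, and dominates each block by a maximal function $\mathcal{M}_{\tau}\big(\sum_{m}t_{k}|\lambda_{k,m}|\chi_{k,m}\chi_{B}\big)$; the weighted maximal inequality (Lemma \ref{key-estimate1}) together with the ball-average structure of the norm then controls each block in $L_{q}(Q_{0,0})$ by $2^{(j-k)n/q}\big\|\lambda|\dot{f}_{\infty,q}(\mathbb{R}^{n},\{t_{k}\})\big\|$. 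You instead extract only the single-coefficient bound $2^{kn/2}|\lambda_{k,m}|M_{Q_{k,m},q}(t_{k})\le\big\|\lambda|\dot{f}_{\infty,q}(\mathbb{R}^{n},\{t_{k}\})\big\|$ (testing the supremum on $P=Q_{k,m}$), reduce $t_{k}$ to $t_{0}$ via $\mathrm{\eqref{Asum1}}$ combined with the generalized H\"older inequality $1\le M_{Q,q}(t_{k})M_{Q,\sigma_{1}}(t_{k}^{-1})$ for $k\ge 0$, and via $\mathrm{\eqref{Asum2}}$ with $\sigma_{2}\ge q$ for $k\le 0$, and then lower-bound $M_{Q_{k,m},q}(t_{0})$ through the $A_{q/\theta}$ doubling estimate of Lemma \ref{Ap-Property}/(ii) applied inside a cube containing both $Q_{k,m}$ and $Q_{0,0}$. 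All of these steps check out: the resulting loss is exponential in $|k|$ and polynomial in the spatial distance, which the arbitrarily large orders $N,L$ in the pairing estimate absorb; one small remark is that your claimed decay $(1+|2^{-k}m|)^{-L}$ for $k\le 0$ is formally stronger than the standard almost-orthogonality decay $(1+|m|)^{-L}$ at coarse scales, but it is recoverable by trading part of the factor $2^{kN}$, and in any case the weaker decay already suffices because your weight loss for $k\le 0$ is only polynomial in $(1+|m|)$. As for what each approach buys: yours is more elementary and self-contained, avoiding the maximal-function machinery entirely, which is possible precisely because the target space $\mathcal{S}_{\infty}^{\prime}(\mathbb{R}^{n})$ tolerates crude, summable losses; the paper's blockwise maximal-function argument is sharper in the relevant scales and is the same technique underlying the $\varphi$-transform theorem (Theorem \ref{phi-tran1}), where single-coefficient bounds would be too lossy, so the paper's proof doubles as a template for the stronger boundedness results used later.
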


\begin{proof}
Let $\lambda \in \dot{f}_{\infty ,q}(\mathbb{R}^{n},\{t_{k}\})$ and $\varphi
\in \mathcal{S}_{\infty }(\mathbb{R}^{n})$. We see that%
\begin{equation*}
\sum_{k=-\infty }^{\infty }\sum_{m\in \mathbb{Z}^{n}}|\lambda
_{k,m}||\langle \psi _{k,m},\varphi \rangle |=I_{1}+I_{2},
\end{equation*}%
where%
\begin{equation*}
I_{1}=\sum_{k=-\infty }^{0}\sum_{m\in \mathbb{Z}^{n}}|\lambda
_{k,m}||\langle \psi _{k,m},\varphi \rangle |,\quad I_{2}=\sum_{k=1}^{\infty
}\sum_{m\in \mathbb{Z}^{n}}|\lambda _{k,m}||\langle \psi _{k,m},\varphi
\rangle |.
\end{equation*}%
It suffices to show that both $I_{1}$ and $I_{2}$ are dominated by%
\begin{equation*}
c\big\|\lambda |\dot{f}_{\infty ,q}(\mathbb{R}^{n},\{t_{k}\})\big\|.
\end{equation*}%
\textit{Estimation of }$I_{1}$. Let us recall the following estimate, see
(3.18) in \cite{M07}. For any $L>0$, there exists a positive constant $M\in 
\mathbb{N}$ such that for all $\varphi $, $\psi \in \mathcal{S}_{\infty }(%
\mathbb{R}^{n}),i,k\in \mathbb{Z}$ and $m,h\in \mathbb{Z}^{n}$,%
\begin{equation*}
\left\vert \langle \varphi _{k,m},\psi _{i,h}\rangle \right\vert \lesssim {%
\big\|}\varphi {\big\|}_{\mathcal{S}_{M}}{\big\|}\psi {\big\|}_{\mathcal{S}%
_{M}}\Big(1+\frac{|2^{-k}m-2^{-i}h|^{n}}{\max (2^{-kn},2^{-in})}\Big)%
^{-L}\min \left( 2^{(i-k)nL},2^{(k-i)nL}\right) .
\end{equation*}%
Therefore,%
\begin{equation*}
\left\vert \langle \psi _{k,m},\varphi \rangle \right\vert \lesssim {\big\|}%
\varphi {\big\|}_{\mathcal{S}_{M}}{\big\|}\psi {\big\|}_{\mathcal{S}_{M}}%
\Big(1+\frac{|2^{-k}m|^{n}}{\max (1,2^{-kn})}\Big)^{-L}2^{-|k|nL},
\end{equation*}%
where the implicit constant is independent of $i,k\in \mathbb{Z},m,h\in 
\mathbb{Z}^{n}$ and 
\begin{equation*}
{\big\|}\varphi {\big\|}_{\mathcal{S}_{M}}=\sup_{x\in \mathbb{R}%
^{n}}\sup_{|\alpha |\leq M}(1+|x|)^{M}|\partial ^{\alpha }\varphi (x)|.
\end{equation*}%
Our estimate use partially some decomposition techniques already used in 
\cite{FJ90} and \cite{Ky03}. {For each }$j\in \mathbb{N}${\ we define }%
\begin{equation*}
{\Omega _{j}:=\{m\in \mathbb{Z}^{n}:2^{j-1}<|m|\leq 2^{j}\}}\text{\quad
and\quad }{\Omega _{0}:=\{m\in \mathbb{Z}^{n}:|m|\leq 1\}.}
\end{equation*}%
Thus,%
\begin{align}
I_{1}\lesssim & {\big\|}\varphi {\big\|}_{\mathcal{S}_{M}}\sum_{k=-\infty
}^{0}2^{knL}\sum_{m\in \mathbb{Z}^{n}}\frac{|\lambda _{k,m}|}{\big(1+|m|\big)%
^{nL}}  \notag \\
=& c{\big\|}\varphi {\big\|}_{\mathcal{S}_{M}}\sum_{k=-\infty
}^{0}2^{knL}\sum\limits_{j=0}^{\infty }\sum\limits_{m\in \Omega _{j}}\frac{%
|\lambda _{k,m}|}{\big(1+|m|\big)^{nL}}  \notag \\
\lesssim & {\big\|}\varphi {\big\|}_{\mathcal{S}_{M}}\sum_{k=-\infty
}^{0}2^{knL}\sum\limits_{j=0}^{\infty }2^{-nLj}\sum\limits_{m\in \Omega
_{j}}|\lambda _{k,m}|.  \notag
\end{align}%
Let $0<\varrho <1$ be such that $\frac{1}{\varrho }=\frac{1}{\tau }+\frac{1}{%
\sigma _{1}}$ with $0<\tau <\min \big(q,\frac{1}{\max (0,1-\frac{1}{\sigma
_{1}})}\big)$. Using the embedding $\ell _{\varrho }\hookrightarrow \ell
_{1} $ we find that 
\begin{align*}
I_{1}\lesssim & {\big\|}\varphi {\big\|}_{\mathcal{S}_{M}}\sum_{k=-\infty
}^{0}2^{knL}\sum\limits_{j=0}^{\infty }2^{-nLj}\Big(\sum\limits_{m\in \Omega
_{j}}|\lambda _{k,m}|^{\varrho }\Big)^{\frac{1}{\varrho }} \\
=& c{\big\|}\varphi {\big\|}_{\mathcal{S}_{M}}\sum_{k=-\infty
}^{0}2^{knL}\sum\limits_{j=0}^{\infty }2^{(\frac{1}{\varrho }-L)nj}\Big(%
2^{(k-j)n}\int\limits_{\cup _{z\in \Omega _{j}}Q_{k,z}}\sum\limits_{m\in
\Omega _{j}}|\lambda _{k,m}|^{\varrho }\chi _{k,m}(y)dy\Big)^{\frac{1}{%
\varrho }}.
\end{align*}%
Let $y\in \cup _{z\in \Omega _{j}}Q_{k,z}$ and $x\in Q_{0,0}$. Then $y\in
Q_{k,z}$ for some $z\in \Omega _{j}$ and ${2^{j-1}<|z|\leq 2^{j}}$. From
this it follows that%
\begin{align*}
\left\vert y-x\right\vert \leq & \left\vert y-2^{-k}z\right\vert +\left\vert
x-2^{-k}z\right\vert \\
\leq & \sqrt{n}\text{ }2^{-k}+\left\vert x\right\vert +2^{-k}\left\vert
z\right\vert \\
\leq & 2^{j-k+\delta _{n}},\quad \delta _{n}\in \mathbb{N},
\end{align*}%
which implies that $y$ is located in the ball $B(x,2^{j-k+\delta _{n}})$. In
addition, from the fact that%
\begin{equation*}
\left\vert y\right\vert \leq \left\vert y-x\right\vert +\left\vert
x\right\vert \leq 2^{j-k+\delta _{n}}+1\leq 2^{j-k+c_{n}},\quad c_{n}\in 
\mathbb{N},
\end{equation*}%
we have that $y$ is located in the ball $B(0,2^{j-k+c_{n}})$. Therefore, 
\begin{align*}
& \Big(2^{(k-j)n}\int\limits_{\cup _{z\in \Omega
_{j}}Q_{k,z}}\sum\limits_{m\in \Omega _{j}}|\lambda _{k,m}|^{\varrho }\chi
_{k,m}(y)dy\Big)^{\frac{1}{\varrho }} \\
\leq & \Big(2^{(k-j)n}\int\limits_{B(x,2^{j-k+c_{n}})}\sum\limits_{m\in
\Omega _{j}}|\lambda _{k,m}|^{\tau }t_{k}^{\tau }(y)\chi _{k,m}(y)\chi
_{B(0,2^{j-k+c_{n}})}(y)dy\Big)^{\frac{1}{\tau }}M_{B(0,2^{j-k+c_{n}}),%
\sigma _{1}}(t_{k}^{-1}) \\
\lesssim & \mathcal{M}_{\tau }\big(\sum\limits_{m\in \mathbb{Z}%
^{n}}t_{k}\left\vert \lambda _{k,m}\right\vert \chi _{k,m}\chi
_{B(0,2^{j-k+c_{n}})}\big)(x)M_{B(0,2^{j-k+c_{n}}),\sigma _{1}}(t_{k}^{-1}).
\end{align*}%
By Lemma {\ref{Ap-Property}/(i)-(ii)}, $\mathrm{\eqref{Asum1}}$ and $\mathrm{%
\eqref{Asum2}}$ we obtain that $t_{k}^{-\sigma _{1}}\in A_{(\frac{q}{\theta }%
)^{\prime }}(\mathbb{R}^{n})$, $k\in \mathbb{Z}$ and%
\begin{align*}
M_{B(0,2^{j-k+c_{n}}),\sigma _{1}}(t_{k}^{-1})\lesssim & 2^{(j-k)\frac{n}{q}%
}M_{B(0,1),\sigma _{1}}(t_{k}^{-1}) \\
\lesssim & 2^{(j-k)\frac{n}{q}}\left( M_{B(0,1),p}(t_{k})\right) ^{-1} \\
\lesssim & 2^{(j-k)\frac{n}{q}-k\alpha _{2}}\left(
M_{B(0,1),p}(t_{0})\right) ^{-1}
\end{align*}%
for any $k\leq 0$ and any $j\in \mathbb{N}_{0}$. Therefore, for any $L$
large enough, 
\begin{equation*}
I_{1}\lesssim {\big\|}\varphi {\big\|}_{\mathcal{S}_{M}}\sum_{k=-\infty
}^{0}2^{k(nL-\alpha _{2}-\frac{n}{q})}\sum\limits_{j=0}^{\infty }2^{(\frac{1%
}{\varrho }-L+\frac{n}{q})nj}\mathcal{M}_{\tau }\big(\sum\limits_{m\in 
\mathbb{Z}^{n}}t_{k}\lambda _{k,m}\chi _{k,m}\chi _{B(0,2^{j-k+c_{n}})}\big)%
(x)
\end{equation*}%
for any $x\in Q_{0,0}$. Using Lemma {\ref{key-estimate1}}, we obtain%
\begin{align*}
& \big\|\mathcal{M}_{\tau }\big(\sum\limits_{m\in \mathbb{Z}%
^{n}}t_{k}\lambda _{k,m}\chi _{k,m}\chi _{B(0,2^{j-k+c_{n}})}\big)%
|L_{q}(Q_{0,0})\big\| \\
\lesssim & \big\|\sum\limits_{m\in \mathbb{Z}^{n}}t_{k}\lambda _{k,m}\chi
_{k,m}\chi _{B(0,2^{j-k+c_{n}})}|L_{q}(\mathbb{R}^{n})\big\| \\
\lesssim & 2^{(j-k)\frac{n}{q}}\big\|\lambda |\dot{f}_{\infty ,q}(\mathbb{R}%
^{n},\{t_{k}\})\big\|
\end{align*}%
for any $k\leq 0$. Indeed, we have%
\begin{align*}
& \big\|\sum\limits_{m\in \mathbb{Z}^{n}}t_{k}\lambda _{k,m}\chi _{k,m}\chi
_{B(0,2^{j-k+c_{n}})}|L_{q}(\mathbb{R}^{n})\big\| \\
=& c2^{(j-k)\frac{n}{q}}\Big(\frac{1}{|B(0,2^{j-k+c_{n}})|}%
\int_{B(0,2^{j-k+c_{n}})}\sum\limits_{m\in \mathbb{Z}^{n}}t_{k}^{q}(x)|%
\lambda _{k,m}|^{q}\chi _{k,m}(x)dx\Big)^{\frac{1}{q}} \\
\lesssim & 2^{(j-k)\frac{n}{q}}\Big(\frac{1}{|B(0,2^{j-k+c_{n}})|}%
\int_{B(0,2^{j-k+c_{n}})}\sum_{i=k-j}^{\infty }\sum\limits_{m\in \mathbb{Z}%
^{n}}t_{i}^{q}(x)|\lambda _{i,m}|^{q}\chi _{i,m}(x)dx\Big)^{\frac{1}{q}} \\
\lesssim & 2^{(j-k)\frac{n}{q}}\big\|\lambda |\dot{f}_{\infty ,q}(\mathbb{R}%
^{n},\{t_{k}\})\big\|.
\end{align*}%
Taking $L$ large enough we obtain%
\begin{equation*}
I_{1}\lesssim {\big\|}\varphi {\big\|}_{\mathcal{S}_{M}}\big\|\lambda |\dot{f%
}_{\infty ,q}(\mathbb{R}^{n},\{t_{k}\})\big\|.
\end{equation*}%
\textit{Estimation of} $I_{2}$. {For each }$j\in \mathbb{N}${\ we define }%
\begin{equation*}
{\Omega _{j}:=\{h\in \mathbb{Z}^{n}:2^{j+k-1}<|h|\leq 2^{j+k}\}\quad }\text{{%
and}}{\quad \Omega _{0}:=\{h\in \mathbb{Z}^{n}:|h|\leq 2^{k}\}.}
\end{equation*}%
{\ }Then we find%
\begin{align}
I_{2}\lesssim & {\big\|}\varphi {\big\|}_{\mathcal{S}_{M}}\sum_{k=1}^{\infty
}2^{-knL}\sum_{m\in \mathbb{Z}^{n}}\frac{|\lambda _{k,m}|}{\big(1+|2^{-k}m|%
\big)^{nL}}  \notag \\
=& c{\big\|}\varphi {\big\|}_{\mathcal{S}_{M}}\sum_{k=1}^{\infty
}2^{-knL}\sum\limits_{j=0}^{\infty }\sum\limits_{m\in \Omega _{j}}\frac{%
|\lambda _{k,m}|}{\big(1+|2^{-k}m|\big)^{nL}}  \notag \\
\leq & c{\big\|}\varphi {\big\|}_{\mathcal{S}_{M}}\sum_{k=1}^{\infty
}2^{-knL}\sum\limits_{j=0}^{\infty }2^{-nLj}\sum\limits_{m\in \Omega
_{j}}|\lambda _{k,m}|.  \notag
\end{align}%
Let $\varrho $ and $\tau $ be as in the estimation of $I_{1}$. The embedding 
$\ell _{\varrho }\hookrightarrow \ell _{1}$ yields that 
\begin{align*}
I_{2}\lesssim & {\big\|}\varphi {\big\|}_{\mathcal{S}_{M}}\sum_{k=1}^{\infty
}2^{-knL}\sum\limits_{j=0}^{\infty }2^{-Lj}\big(\sum\limits_{m\in \Omega
_{j}}|\lambda _{k,m}|^{\varrho }\big)^{\frac{1}{\varrho }} \\
=& c{\big\|}\varphi {\big\|}_{\mathcal{S}_{M}}\sum_{k=1}^{\infty
}2^{-knL}\sum\limits_{j=0}^{\infty }2^{(\frac{n}{\varrho }-nL)j}\big(%
2^{(k-j)n}\int\limits_{\cup _{z\in \Omega _{j}}Q_{k,z}}\sum\limits_{m\in
\Omega _{j}}|\lambda _{k,m}|^{\varrho }\chi _{k,m}(y)dy\big)^{\frac{1}{%
\varrho }}.
\end{align*}%
Let $y\in \cup _{z\in \Omega _{j}}Q_{k,z}$ and $x\in Q_{0,0}$. Then $y\in
Q_{k,z}$ for some $z\in \Omega _{j}$ and ${2^{j-1}<2^{-k}|z|\leq 2^{j}}$.
From this it follows that%
\begin{align*}
\left\vert y-x\right\vert \leq & \left\vert y-2^{-k}z\right\vert +\left\vert
x-2^{-k}z\right\vert \\
\leq & \sqrt{n}\text{ }2^{-k}+\left\vert x\right\vert +2^{-k}\left\vert
z\right\vert \\
\leq & 2^{j+\delta _{n}},\quad \delta _{n}\in \mathbb{N},
\end{align*}%
which implies that $y$ is located in the ball $B\left( x,2^{j+\delta
_{n}}\right) $. In addition, from the fact that%
\begin{equation*}
\left\vert y\right\vert \leq \left\vert y-x\right\vert +\left\vert
x\right\vert \leq 2^{j+\delta _{n}}+1\leq 2^{j+c_{n}},\quad c_{n}\in \mathbb{%
N},
\end{equation*}%
we have that $y$ is located in the ball $B(0,2^{j+c_{n}})$. Therefore, 
\begin{align*}
& \big(2^{(k-j)n}\int\limits_{\cup _{z\in \Omega
_{j}}Q_{k,z}}\sum\limits_{m\in \Omega _{j}}|\lambda _{k,m}|^{\varrho }\chi
_{k,m}(y)dy\big)^{\frac{1}{\varrho }} \\
\leq & 2^{k\frac{n}{\varrho }}\Big(2^{-jn}\int\limits_{B(x,2^{j+\delta
_{n}})}\sum\limits_{m\in \Omega _{j}}|\lambda _{k,m}|^{\tau }t_{k}^{\tau
}(y)\chi _{k,m}(y)\chi _{B(0,2^{j+c_{n}})}(y)dy\Big)^{\frac{1}{\tau }%
}M_{B(0,2^{j+c_{n}}),\sigma _{1}}(t_{k}^{-1}) \\
\lesssim & 2^{k\frac{n}{\varrho }}\mathcal{M}_{\tau }\big(\sum\limits_{m\in 
\mathbb{Z}^{n}}t_{k}\lambda _{k,m}\chi _{k,m}\chi _{B(0,2^{j+c_{n}})}\big)%
(x)M_{B(0,2^{j+c_{n}}),\sigma _{1}}(t_{k}^{-1}).
\end{align*}%
By $\mathrm{\eqref{Asum1}}$ and Lemma {\ref{Ap-Property}/(iii)}, 
\begin{align*}
M_{B(0,2^{j+c_{n}}),\sigma _{1}}(t_{k}^{-1})\lesssim & 2^{-k\alpha _{1}}\big(%
M_{B(0,2^{j+c_{n}}),p}(t_{0})\big)^{-1} \\
\lesssim & 2^{j(\frac{n}{p}-\frac{n\delta }{p})-k\alpha _{1}}\big(%
M_{B(0,1),p}(t_{0})\big)^{-1} \\
\lesssim & 2^{j(\frac{n}{p}-\frac{n\delta }{p})-k\alpha _{1}}\left(
M_{B(0,1),p}(t_{0})\right) ^{-1}.
\end{align*}%
Therefore, 
\begin{equation*}
I_{2}\lesssim {\big\|}\varphi {\big\|}_{\mathcal{S}_{M}}\sum_{k=1}^{\infty
}2^{-k(nL-\frac{n}{\varrho }+\alpha _{1})}\sum\limits_{j=0}^{\infty }2^{(%
\frac{n}{\varrho }-nL+\frac{n}{p}-\frac{n\delta }{p})j}\mathcal{M}_{\tau }%
\big(t_{k}\sum\limits_{m\in \mathbb{Z}^{n}}\lambda _{k,m}\chi _{k,m}\chi
_{B(0,2^{j+c_{n}})}\big)(x)
\end{equation*}%
for any $x\in Q_{0,0}$. As in the estimation of $I_{1}$, we obtain%
\begin{equation*}
I_{2}\lesssim {\big\|}\varphi {\big\|}_{\mathcal{S}_{M}}{\big\|}\lambda |%
\dot{f}_{\infty ,q}(\mathbb{R}^{n},\{t_{k}\}){\big\|}.
\end{equation*}%
This completes the proof of Lemma {\ref{convergence}.}
\end{proof}

For a sequence $\lambda =\{\lambda _{k,m}\}_{k\in \mathbb{Z},m\in \mathbb{Z}%
^{n}}\subset \mathbb{C},0<r\leq \infty $ and a fixed $d>0$, set%
\begin{equation*}
\lambda _{k,m,r,d}^{\ast }:=\Big(\sum_{h\in \mathbb{Z}^{n}}\frac{|\lambda
_{k,h}|^{r}}{(1+2^{k}|2^{-k}h-2^{-k}m|)^{d}}\Big)^{\frac{1}{r}}
\end{equation*}%
and $\lambda _{r,d}^{\ast }:=\{\lambda _{k,m,r,d}^{\ast }\}_{k\in \mathbb{Z}%
,m\in \mathbb{Z}^{n}}\subset \mathbb{C}$.

\begin{lemma}
\label{lamda-equi2}Let $0<\theta \leq q<\infty ,\gamma \in \mathbb{Z}$\ and $%
d>2n$. Let $\{t_{k}\}$ be a $q$-admissible weight sequence satisfying $%
\mathrm{\eqref{Asum1}}$ with $\sigma _{1}=\theta \left( \frac{q}{\theta }%
\right) ^{\prime }$ and $p=q$. Then%
\begin{equation*}
\big\|\lambda _{q,d}^{\ast }|\dot{f}_{\infty ,q}(\mathbb{R}%
^{n},\{t_{k-\gamma }\})\big\|\approx \big\|\lambda |\dot{f}_{\infty ,q}(%
\mathbb{R}^{n},\{t_{k-\gamma }\})\big\|.
\end{equation*}%
In addition if $\{t_{k}\}$ satisfying $\mathrm{\eqref{Asum2}}$ with $\sigma
_{2}\geq q$ and $\alpha _{2}\in \mathbb{R}$, then 
\begin{equation}
\big\|\lambda _{q,d}^{\ast }|\dot{f}_{\infty ,q}(\mathbb{R}%
^{n},\{t_{k-\gamma }\})\big\|\lesssim \big\|\lambda |\dot{f}_{\infty ,q}(%
\mathbb{R}^{n},\{t_{k}\})\big\|.  \label{Second}
\end{equation}
\end{lemma}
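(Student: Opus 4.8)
The plan is to prove both estimates cube by cube, working directly from the reformulation of the $\dot f_{\infty ,q}$-quasi-norm as a supremum over $P\in\mathcal Q$ of the averaged sums in its definition. The inequality ``$\gtrsim$'' in the first equivalence is immediate: retaining only the diagonal term $h=m$ in the definition of $\lambda^{\ast}_{k,m,q,d}$ gives $\lambda^{\ast}_{k,m,q,d}\ge|\lambda_{k,m}|$ entrywise, whence $\|\lambda|\dot f_{\infty ,q}(\mathbb R^{n},\{t_{k-\gamma}\})\|\le\|\lambda^{\ast}_{q,d}|\dot f_{\infty ,q}(\mathbb R^{n},\{t_{k-\gamma}\})\|$. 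Since the class $\dot X_{\alpha ,\sigma ,q}$ and condition \eqref{Asum1} are invariant under the index shift $k\mapsto k-\gamma$, for the converse I may harmlessly absorb $\gamma$ and prove $\|\lambda^{\ast}_{q,d}|\dot f_{\infty ,q}\|\lesssim\|\lambda|\dot f_{\infty ,q}\|$ for the sequence $\{t_{k-\gamma}\}$ itself.

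For the hard direction I would fix a dyadic cube $P$ with $l(P)=2^{-\nu}$, pass to the equivalent norm, and expand $(\lambda^{\ast}_{k,m,q,d})^{q}=\sum_{h}(1+|h-m|)^{-d}|\lambda_{k,h}|^{q}$ inside the $P$-average of $\sum_{k}2^{knq/2}t_{k-\gamma}^{q}(\lambda^{\ast}_{k,m,q,d})^{q}$. Exactly as in the proof of Lemma \ref{convergence}, I would split the $h$-sum into the dyadic shells $\Omega_{j}=\{h:|h-m|\sim 2^{j}\}$, $j\ge 0$, on which $(1+|h-m|)^{-d}\sim 2^{-dj}$. The decisive step is to keep the weight under control by the Hölder splitting $\tfrac1\varrho=\tfrac1\tau+\tfrac1{\sigma_{1}}$, with $\sigma_{1}=\theta(q/\theta)'$ and $0<\tau<q$: on each shell this factors $|\lambda_{k,h}|^{q}$ into a weighted piece that becomes $\mathcal M_{\tau}\big(\sum_{h}t_{k-\gamma}|\lambda_{k,h}|\chi_{k,h}\chi_{B}\big)(x)$ over the ball $B=B(x,c_{n}2^{j-k})$, and a reciprocal piece collapsing to $M_{B,\sigma_{1}}(t_{k-\gamma}^{-1})$. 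Condition \eqref{Asum1}, in the form $M_{Q,\sigma_{1}}(t_{k-\gamma}^{-1})\lesssim M_{Q,q}(t_{k-\gamma})^{-1}$ together with Lemma \ref{Ap-Property}, bounds this average and yields a controlled power of $2^{j}$; the weighted maximal factor is absorbed by Lemma \ref{key-estimate1} and Remark \ref{r-estimates}. The shell of scale $j$ forces me to replace $P$ by the concentric enlargement $\tilde P_{j}$ with $|\tilde P_{j}|\sim 2^{jn}|P|$, and here the supremum over \emph{all} cubes in the $\dot f_{\infty ,q}$-norm is essential: because $\tilde P_{j}$ sums the scales $k\ge\nu-j$, restricting to $k\ge\nu$ only decreases the average, so each shell is estimated by $\|\lambda|\dot f_{\infty ,q}\|^{q}$ uniformly in $j$.

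Collecting the shells, the resulting $j$-series has the schematic form $\sum_{j\ge 0}2^{-dj}\,2^{jn}\,2^{jn}\,\|\lambda|\dot f_{\infty ,q}\|^{q}$, where one factor $2^{jn}$ is the volume ratio $|\tilde P_{j}|/|P|$ and the second comes from $M_{B,\sigma_{1}}(t_{k-\gamma}^{-1})$ after raising to the $q$-th power; convergence holds precisely when $d>2n$, which is the stated hypothesis. For the shifted estimate \eqref{Second} I would run the same argument and then compare the two weight families: since $\{t_{k}\}$ additionally satisfies \eqref{Asum2} with $\sigma_{2}\ge q$, this provides the averaged comparison of $t_{k-\gamma}$ with $t_{k}$ (via Remark \ref{r-estimates}(ii), second bullet) at the cost of a harmless factor $2^{\alpha_{2}\gamma}$, so that the first equivalence $\|\lambda^{\ast}_{q,d}|\dot f_{\infty ,q}(\{t_{k-\gamma}\})\|\approx\|\lambda|\dot f_{\infty ,q}(\{t_{k-\gamma}\})\|$ may be followed by the weight change $\{t_{k-\gamma}\}\to\{t_{k}\}$ on the right-hand side.

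I expect the main obstacle to be the absence of a genuine vector-valued Fefferman--Stein inequality at $p=\infty$: the $\dot f_{\infty ,q}$-norm is a Carleson-type supremum, so Lemma \ref{FS-inequality} cannot be invoked to pass from $\lambda$ to $\lambda^{\ast}$ in one stroke, and the smoothing must be undone shell by shell. The real work is to redistribute, uniformly in $k$ and $j$ and with every constant independent of $P$, the loss incurred by enlarging $P$ to $\tilde P_{j}$ against the decay $2^{-dj}$, and it is exactly this bookkeeping — one factor $2^{jn}$ from volume and one from the weight average controlled by \eqref{Asum1} — that pins the admissible range to $d>2n$.
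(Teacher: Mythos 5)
Your outer scaffolding (the trivial direction via the diagonal term $h=m$, absorbing the shift $\gamma$ by invariance of \eqref{Asum1}, the shell decomposition $\Omega_{j,m}=\{h:|h-m|\sim 2^{j}\}$ with $(1+|h-m|)^{-d}\sim 2^{-dj}$, and the containment of $\cup_{z\in\Omega_{j,m}}Q_{k,z}$ in balls $B(x_{P},2^{j-k_{P}+c_{n}})$) agrees with the paper's proof, which follows \cite[Lemma 2.3]{FJ90}. But your ``decisive step'' is the wrong tool here and this is a genuine gap. The H\"older splitting $\frac{1}{\varrho}=\frac{1}{\tau}+\frac{1}{\sigma_{1}}$, which produces $\mathcal{M}_{\tau}\big(\sum_{h}t_{k-\gamma}|\lambda_{k,h}|\chi_{k,h}\chi_{B}\big)(x)\cdot M_{B,\sigma_{1}}(t_{k-\gamma}^{-1})$, is designed for Lemma \ref{convergence}, where the quantity being estimated ($\sum_{k,m}|\lambda_{k,m}||\langle\psi_{k,m},\varphi\rangle|$) carries \emph{no} weight, so the weight must be inserted inside the maximal function and the outer norm that remains is an unweighted $L_{q}$-norm. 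In the present lemma the quantity you must bound,
\begin{equation*}
\frac{1}{|P|}\int_{P}\sum_{k\geq k_{P}}\sum_{m\in\mathbb{Z}^{n}}2^{\frac{knq}{2}}t_{k-\gamma}^{q}(x)\big(\lambda_{k,m,q,d}^{\ast}\big)^{q}\chi_{k,m}(x)\,dx,
\end{equation*}
already contains $t_{k-\gamma}^{q}(x)$ pointwise. After your splitting the weight appears twice: once outside, once inside $\mathcal{M}_{\tau}$. The factor $M_{B,\sigma_{1}}(t_{k-\gamma}^{-1})$ is an \emph{average} over a large ball and cannot cancel the \emph{pointwise} value $t_{k-\gamma}^{q}(x)$ (condition \eqref{Asum1} only pairs it with the averaged counterpart $M_{B,q}(t_{k-\gamma})$); and if instead you finish with Lemma \ref{key-estimate1} applied to $\mathcal{M}_{\tau}(t_{k-\gamma}\lambda\chi_{B})$ against the outer weight $t_{k-\gamma}$, you arrive at $\int_{B}t_{k-\gamma}^{2q}\sum_{h}|\lambda_{k,h}|^{q}\chi_{k,h}$, which is not controlled by $\|\lambda|\dot{f}_{\infty,q}(\mathbb{R}^{n},\{t_{k-\gamma}\})\|$. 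Relatedly, your $j$-bookkeeping ($2^{-dj}\cdot 2^{jn}\cdot 2^{jn}$, hence $d>2n$) does not emerge from your own scheme: the H\"older splitting costs $2^{jnq/\varrho}$ per shell with $\frac{1}{\varrho}=\frac{1}{\tau}+\frac{1}{\sigma_{1}}$, and since $\sigma_{1}=\theta(q/\theta)'$ may be small this exponent depends on $\theta$ and can exceed $2n$ by an arbitrary amount.

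The paper avoids inserting the weight altogether, precisely because the same weight $t_{k-\gamma}$ sits on both sides of the claimed inequality. It fixes $\beta$ with $\frac{nq}{d-n}<\beta<q$ (such $\beta$ exists exactly because $d>2n$; this is the only place the hypothesis enters) and uses the embedding $\ell_{\beta}\hookrightarrow\ell_{q}$ on each shell to get, for $x\in Q_{k,m}\subset P$,
\begin{equation*}
\big(\lambda_{k,m,q,d}^{\ast}\big)^{q}\lesssim\sum_{j=0}^{\infty}2^{(\frac{nq}{\beta}-d)j}\Big(\mathcal{M}_{\beta}\big(\sum_{h\in\mathbb{Z}^{n}}\lambda_{k,h}\chi_{k,h}\chi_{B(x_{P},2^{j-k_{P}+c_{n}})}\big)(x)\Big)^{q},
\end{equation*}
with the maximal function acting on the \emph{unweighted} sequence. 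The weight is then handled in one stroke by the weighted maximal inequality of Lemma \ref{key-estimate1} (legitimate uniformly in $k$, since \eqref{Asum1} with $p=q$, $j=k$ places $t_{k-\gamma}^{q}$ in a fixed Muckenhoupt class with uniform constant), and the enlarged ball costs only the volume ratio $|B(x_{P},2^{j-k_{P}+c_{n}})|/|P|\approx 2^{jn}$, so the series converges because $d>\frac{nq}{\beta}+n$. Your treatment of \eqref{Second} (change of weights via Remark \ref{r-estimates}/(ii)) is in the same spirit as the paper's, but it rests on the defective main step and so inherits the gap.
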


\begin{proof}
{The proof is similar to that of \cite[Lemma 2.3]{FJ90}. Obviously, }%
\begin{equation*}
\big\|\lambda |\dot{f}_{\infty ,q}(\mathbb{R}^{n},\{t_{k-\gamma }\})\big\|{%
\leq \big\|\lambda _{q,d}^{\ast }|\dot{f}_{\infty ,q}(\mathbb{R}%
^{n},\{t_{k-\gamma }\})\big\|.}
\end{equation*}%
{Let us prove the opposite inequality. For any }$j\in ${{$\mathbb{N}$}}$%
,m\in \mathbb{Z}^{n}${\ and any $k\in \mathbb{Z}${\ }we define }%
\begin{equation*}
{\Omega _{j,m}:=\{h\in \mathbb{Z}^{n}:2^{j-1}<\left\vert h-m\right\vert \leq
2^{j}\}\quad }\text{{and}}{\quad \Omega _{0,m}:=\{h\in \mathbb{Z}%
^{n}:\left\vert h-m\right\vert \leq 1\}.}
\end{equation*}%
{Let }$\frac{nq}{d-n}<\beta <q$. We observe that {for any $x\in Q_{k,m}$, }%
\begin{equation*}
{\sum_{h\in \mathbb{Z}^{n}}\frac{|\lambda _{k,h}|^{q}}{(1+|h-m|)^{d}}}
\end{equation*}%
{can be rewritten as}%
\begin{equation*}
\sum\limits_{j=0}^{\infty }\sum\limits_{h\in {\Omega _{j,m}}}\frac{%
\left\vert \lambda _{k,h}\right\vert ^{q}}{\left( 1+\left\vert
h-m\right\vert \right) ^{d}},
\end{equation*}%
which is bounded by {%
\begin{align}
& 2^{d}\sum\limits_{j=0}^{\infty }2^{-dj}\sum\limits_{h\in {\Omega _{j,m}}%
}\left\vert \lambda _{k,h}\right\vert ^{q}  \notag \\
\leq & 2^{d}\sum\limits_{j=0}^{\infty }2^{-dj}\Big(\sum\limits_{h\in {\Omega
_{j,m}}}\left\vert \lambda _{k,h}\right\vert ^{\beta }\Big)^{\frac{q}{\beta }%
}  \notag \\
=& 2^{d}\sum\limits_{j=0}^{\infty }2^{-dj}\Big(2^{kn}\int\limits_{\cup
_{z\in {\Omega _{j,m}}}Q_{k,z}}\sum\limits_{h\in \Omega _{j}}\left\vert
\lambda _{k,h}\right\vert ^{\beta }\chi _{k,h}(y)dy\Big)^{\frac{q}{\beta }}.
\label{key-est2}
\end{align}%
Let $x\in Q_{k,m}\subset P\in $}$\mathcal{Q}${\ and $y\in \cup _{z\in {%
\Omega _{j,m}}}Q_{k,z}$. Then $y\in Q_{k,z}$ for some $z\in $}${\Omega _{j,m}%
}${\ and }%
\begin{equation*}
{2^{j-1}<\left\vert z-m\right\vert \leq 2^{j}.}
\end{equation*}%
{Then }$\left\vert y-x\right\vert \lesssim 2^{j-k}$, which implies that $y$
is located in the ball $B(x,2^{j-k+\delta _{n}})$, $\delta _{n}\in \mathbb{N}
$. In addition, from the fact that%
\begin{align*}
\left\vert y-x_{P}\right\vert \leq & \left\vert y-x\right\vert +\left\vert
x-x_{P}\right\vert \\
\leq & 2^{j-k+\delta _{n}}+\sqrt{n}2^{-k_{P}}\leq 2^{j-k_{P}+c_{n}},\quad
c_{n}\in \mathbb{N},k_{P}=-\log _{2}l(P),k\geq k_{P}
\end{align*}%
we have that $y$ is located in the ball $B(x_{P},2^{j-k_{P}+c_{n}})$, where $%
x_{P}$ is the centre of $P$. Therefore, $\mathrm{\eqref{key-est2}}$ does not
exceed%
\begin{equation*}
c\text{ }\sum\limits_{j=0}^{\infty }2^{(\frac{nq}{\beta }-d)j}\Big(\mathcal{M%
}_{\beta }\big(\sum\limits_{h\in \mathbb{Z}^{n}}\lambda _{k,h}\chi
_{k,h}\chi _{B(x_{P},2^{j-k_{P}+c_{n}})}\big)(x)\Big)^{q}.
\end{equation*}%
Recall that%
\begin{equation*}
{\big\|\lambda _{q,d}^{\ast }|\dot{f}_{\infty ,q}(\mathbb{R}%
^{n},\{t_{k-\gamma }\})\big\|}^{q}=\sup_{P\in \mathcal{Q}}\frac{1}{|P|}%
\sum\limits_{k=-\log _{2}l(P)}^{\infty }\sum\limits_{m\in \mathbb{Z}^{n}}2^{k%
\frac{nq}{2}}{\big\|t{_{k-\gamma }}\lambda _{k,m,q,d}^{\ast }\chi
_{Q_{k,m}\cap P}|L_{q}(\mathbb{R}^{n})\big\|}^{q}.
\end{equation*}%
Using Lemma \ref{key-estimate1} and the fact that $d>\frac{nq}{\beta }+n$,
we obtain the desired estimate. To prove $\mathrm{\eqref{Second}}$ we use
again Lemma {\ref{key-estimate1} combined with Remark \ref{r-estimates}/(ii).%
}
\end{proof}

Let $\tilde{p}=p$ if $0<p<\infty $ and $\tilde{p}=q$ if $p=\infty $. For $%
\tilde{p}=q$, applying last lemma and {repeating the same arguments of \cite[%
Theorem 2.2]{FJ90}} we obtain the so called the $\varphi $-transform
characterization in the sense of Frazier and Jawerth, when for $0<p<\infty $%
, the proof of is given in {\cite{D20.1}. }It will play an important role in
the rest of the paper.

\begin{theorem}
\label{phi-tran1}Let $\alpha =(\alpha _{1},\alpha _{2})\in \mathbb{R}%
^{2},0<\theta \leq p\leq \infty $ and$\ 0<q<\infty $. Let $\{t_{k}\}\in \dot{%
X}_{\alpha ,\sigma ,\tilde{p}}$ be a $\tilde{p}$-admissible weight sequence
with $\sigma =(\sigma _{1}=\theta \left( \frac{\tilde{p}}{\theta }\right)
^{\prime },\sigma _{2}\geq \tilde{p})$. Let $\varphi $, $\psi \in \mathcal{S}%
(\mathbb{R}^{n})$ satisfying $\mathrm{\eqref{Ass1}}$\ through\ $\mathrm{%
\eqref{Ass3}}$. The operators 
\begin{equation*}
S_{\varphi }:\dot{F}_{p,q}(\mathbb{R}^{n},\{t_{k}\})\rightarrow \dot{f}%
_{p,q}(\mathbb{R}^{n},\{t_{k}\})
\end{equation*}%
and 
\begin{equation*}
T_{\psi }:\dot{f}_{p,q}(\mathbb{R}^{n},\{t_{k}\})\rightarrow \dot{F}_{p,q}(%
\mathbb{R}^{n},\{t_{k}\})
\end{equation*}%
are bounded. Furthermore, $T_{\psi }\circ S_{\varphi }$ is the identity on $%
\dot{F}_{p,q}(\mathbb{R}^{n},\{t_{k}\})$.
\end{theorem}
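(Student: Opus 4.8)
The plan is to treat the genuinely new case $p=\infty$, i.e. $\tilde p=q$; for $0<p<\infty$ the assertion is exactly \cite{D20.1}, so I will only indicate how the three claims are obtained for the spaces $\dot F_{\infty,q}(\Rn,\{t_k\})$ and $\dot f_{\infty,q}(\Rn,\{t_k\})$, following the scheme of \cite[Theorem 2.2]{FJ90} with the weighted tools of the present paper. I would first dispose of the identity $T_\psi\circ S_\varphi=\mathrm{id}$, as it is purely algebraic. Recalling $\varphi_{k,m}(x)=2^{kn/2}\varphi(2^kx-m)$ and $\psi_{k,m}(x)=2^{kn/2}\psi(2^kx-m)$, a direct computation gives $\widetilde\varphi_k\ast f(2^{-k}m)=2^{kn/2}\langle f,\varphi_{k,m}\rangle$ and $\psi_k(\cdot-2^{-k}m)=2^{kn/2}\psi_{k,m}$. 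Substituting these into the Calderón reproducing formula \eqref{proc2} of Lemma \ref{DW-lemma1} collapses it exactly to $f=\sum_{k}\sum_{m}\langle f,\varphi_{k,m}\rangle\,\psi_{k,m}=T_\psi(S_\varphi f)$; convergence of this series in $\mathcal S_\infty'(\Rn)$ is furnished by Lemma \ref{convergence}.

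Next I would prove the boundedness of $T_\psi$. Writing $\varphi_\nu\ast(T_\psi\lambda)=\sum_{k,m}\lambda_{k,m}\,\varphi_\nu\ast\psi_{k,m}$ and invoking the Fourier support condition \eqref{Ass1}, only the scales with $|k-\nu|\le 1$ contribute, since $\mathcal F(\varphi_\nu\ast\psi_{k,m})$ is supported in disjoint annuli otherwise. For these scales the standard kernel bound $|\varphi_\nu\ast\psi_{k,m}(x)|\lesssim 2^{kn/2}(1+2^{k}|x-2^{-k}m|)^{-d}$, valid for every $d>0$ by the Schwartz decay of $\varphi$ and $\psi$, lets me dominate $|\varphi_\nu\ast(T_\psi\lambda)(x)|$ by a sum which, after grouping the lattice points into dyadic shells around $x$, is comparable to the majorant $\lambda^\ast_{k,\cdot,1,d}$. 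Substituting this into the quasi-norm \eqref{norm} and absorbing the scale shift $\gamma=\nu-k\in\{-1,0,1\}$ — which is precisely the shift parameter of Lemma \ref{lamda-equi2} and costs only a bounded weight factor by \eqref{Asum1}--\eqref{Asum2} — I would apply Lemma \ref{lamda-equi2} (its equivalence together with \eqref{Second}) to replace $\lambda^\ast$ by $\lambda$, obtaining $\|T_\psi\lambda|\dot F_{\infty,q}(\Rn,\{t_k\})\|\lesssim\|\lambda|\dot f_{\infty,q}(\Rn,\{t_k\})\|$.

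For the boundedness of $S_\varphi$ I would start from the identity $(S_\varphi f)_{k,m}=2^{-kn/2}\,\widetilde\varphi_k\ast f(2^{-k}m)$ established above, so that the $\dot f_{\infty,q}$ quasi-norm of $S_\varphi f$ is governed by $\sum_{k,m}t_k^q\,|\widetilde\varphi_k\ast f(2^{-k}m)|^q\chi_{k,m}$. Since $\widetilde\varphi_k\ast f$ is band-limited, each sampled value is dominated, for $x\in Q_{k,m}$, by the Peetre-type maximal function of $\widetilde\varphi_k\ast f$ at $x$, and hence by $\mathcal M(\widetilde\varphi_k\ast f)(x)$ once the Peetre exponent is chosen large enough. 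Inserting this pointwise bound into the sup-over-cubes expression \eqref{norm} and applying the weighted vector-valued maximal inequality of Lemma \ref{FS-inequality} (after the localization described below, so that Lemma \ref{key-estimate1} is applicable scale by scale), I would arrive at a bound by the $\dot F_{\infty,q}$ quasi-norm generated by $\widetilde\varphi$; since $\widetilde\varphi$ is again admissible ($|\mathcal F\widetilde\varphi|=|\mathcal F\varphi|$), this equals, up to constants, $\|f|\dot F_{\infty,q}(\Rn,\{t_k\})\|$ by the independence of the quasi-norm on the generating function, itself a consequence of the same maximal estimates.

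The main obstacle is the $p=\infty$ geometry of the quasi-norm: it is a supremum over dyadic cubes $P$ of a localized average carrying the truncated sum $\sum_{k\ge-\log_2 l(P)}$, so one cannot simply invoke a global $L_p(\ell_q)$ Fefferman--Stein inequality as for $p<\infty$. The maximal operator may only be applied after restricting the relevant sequences of functions to balls $B(x_P,2^{j-k_P+c_n})$ adapted to each $P$, precisely as in the proofs of Lemma \ref{convergence} and Lemma \ref{lamda-equi2}; the bookkeeping needed to sum the geometric series in $j$ and $k$ while absorbing the weight ratios through \eqref{Asum1}--\eqref{Asum2} is the technical heart of the matter. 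It is exactly this localization that Lemma \ref{lamda-equi2} packages, so that once $\|\lambda^\ast_{q,d}|\dot f_{\infty,q}\|\approx\|\lambda|\dot f_{\infty,q}\|$ is available the remaining estimates reduce to the classical computation of \cite[Theorem 2.2]{FJ90}.
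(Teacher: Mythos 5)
Your proposal is correct and follows essentially the same route as the paper, which likewise disposes of $0<p<\infty$ by citing \cite{D20.1} and handles $p=\infty$ (i.e.\ $\tilde p=q$) by combining Lemma \ref{lamda-equi2} with the argument scheme of \cite[Theorem 2.2]{FJ90}. The only difference is that you unpack what ``repeating the arguments of \cite{FJ90}'' means (Calder\'on formula plus Lemma \ref{convergence} for $T_\psi\circ S_\varphi=\mathrm{id}$, the $|k-\nu|\le 1$ Fourier-support reduction and the $\lambda^{\ast}$ majorant for $T_\psi$, Peetre-maximal and localized weighted maximal estimates for $S_\varphi$), which is more detail than the paper itself provides.
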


\begin{corollary}
Let $\alpha =(\alpha _{1},\alpha _{2})\in \mathbb{R}^{2},0<\theta \leq p\leq
\infty $ and$\ 0<q<\infty $. Let $\{t_{k}\}\in \dot{X}_{\alpha ,\sigma ,%
\tilde{p}}$ be a $\tilde{p}$-admissible weight sequence with $\sigma
=(\sigma _{1}=\theta \left( \frac{\tilde{p}}{\theta }\right) ^{\prime
},\sigma _{2}\geq \tilde{p})$. The definition of the spaces $\dot{F}_{p,q}(%
\mathbb{R}^{n},\{t_{k}\})$ is independent of the choices of $\varphi \in 
\mathcal{S}(\mathbb{R}^{n})$ satisfying $\mathrm{\eqref{Ass1}}$\ through\ $%
\mathrm{\eqref{Ass2}}$.
\end{corollary}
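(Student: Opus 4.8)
The plan is to deduce the statement from the $\varphi$-transform characterization of Theorem \ref{phi-tran1}, thereby reducing the claim to a change-of-kernel estimate at the level of the sequence space $\dot{f}_{p,q}(\mathbb{R}^n,\{t_k\})$, which is manifestly independent of the analyzing function. Fix two functions $\varphi^{(0)},\varphi^{(1)}\in\mathcal{S}(\mathbb{R}^n)$ satisfying \eqref{Ass1} and \eqref{Ass2}, and for each choose a paired $\psi^{(i)}\in\mathcal{S}(\mathbb{R}^n)$ satisfying \eqref{Ass1} through \eqref{Ass3}, whose existence is guaranteed by \cite[Lemma 6.9]{FrJaWe01}. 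By Theorem \ref{phi-tran1}, for each $i\in\{0,1\}$ the operators $S_{\varphi^{(i)}}$ and $T_{\psi^{(i)}}$ are bounded between $\dot{F}_{p,q}(\mathbb{R}^n,\{t_k\})$ (normed by $\varphi^{(i)}$) and $\dot{f}_{p,q}(\mathbb{R}^n,\{t_k\})$, and $T_{\psi^{(i)}}\circ S_{\varphi^{(i)}}$ is the identity. Combining the boundedness of $S_{\varphi^{(i)}}$ with the boundedness of $T_{\psi^{(i)}}$ applied to the reproducing identity gives
\[
\big\|f\,|\,\dot{F}_{p,q}^{\varphi^{(i)}}\big\|\approx\big\|S_{\varphi^{(i)}}f\,|\,\dot{f}_{p,q}(\mathbb{R}^n,\{t_k\})\big\|,\qquad i\in\{0,1\},
\]
where the superscript records the function used in the norm.

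First I would use the Calderón reproducing formula, Lemma \ref{DW-lemma1}, which holds for every $f\in\mathcal{S}_\infty^\prime(\mathbb{R}^n)$ and yields $f=T_{\psi^{(0)}}S_{\varphi^{(0)}}f$. Writing $\lambda^{(0)}:=S_{\varphi^{(0)}}f$ and applying $S_{\varphi^{(1)}}$ gives, coefficientwise, $(S_{\varphi^{(1)}}f)_{j,h}=\sum_{k,m}\lambda^{(0)}_{k,m}\langle\psi^{(0)}_{k,m},\varphi^{(1)}_{j,h}\rangle$, so that $S_{\varphi^{(1)}}f=A\lambda^{(0)}$ for the matrix $A$ with entries $\langle\psi^{(0)}_{k,m},\varphi^{(1)}_{j,h}\rangle$. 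By the estimate recalled in the proof of Lemma \ref{convergence}, these entries satisfy the almost-diagonal decay
\[
\big|\langle\psi^{(0)}_{k,m},\varphi^{(1)}_{j,h}\rangle\big|\lesssim\Big(1+\frac{|2^{-k}m-2^{-j}h|^n}{\max(2^{-kn},2^{-jn})}\Big)^{-L}\min\!\big(2^{(j-k)nL},2^{(k-j)nL}\big)
\]
for every $L>0$. The whole matter therefore reduces to proving that $A$ is bounded on $\dot{f}_{p,q}(\mathbb{R}^n,\{t_k\})$.

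To establish this boundedness I would split the double sum defining $A\lambda^{(0)}$ into the diagonal band and the two off-diagonal parts $k\le j$ and $k>j$, exactly as in the estimation of $I_1$ and $I_2$ in Lemma \ref{convergence}. On each scale the spatial decay lets me dominate $|(A\lambda^{(0)})_{j,h}|$ by a scale-shifted version of the Peetre-type maximal sequence $(\lambda^{(0)})_{q,d}^{\ast}$, after which Lemma \ref{lamda-equi2} replaces $(\lambda^{(0)})_{q,d}^{\ast}$ by $\lambda^{(0)}$ up to a controlled shift; the remaining cross-scale factors $2^{(k-j)nL}$ and $2^{(j-k)nL}$ are absorbed using the weight bounds \eqref{Asum1} and \eqref{Asum2}, and the ensuing maximal operators are handled by the weighted Fefferman–Stein inequality, Lemma \ref{FS-inequality}. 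Choosing $L$ (equivalently $d$) large enough compared with $\alpha_1,\alpha_2,n,q$ makes the geometric series in the scale difference converge, giving $\|A\lambda^{(0)}\,|\,\dot{f}_{p,q}\|\lesssim\|\lambda^{(0)}\,|\,\dot{f}_{p,q}\|$. Together with the displayed equivalences this yields $\|f\,|\,\dot{F}^{\varphi^{(1)}}\|\lesssim\|f\,|\,\dot{F}^{\varphi^{(0)}}\|$, and interchanging the roles of the two pairs gives the reverse inequality, hence the asserted independence. The main obstacle is precisely this change-of-kernel bound on the weighted sequence space: the scale shifts induced by $A$ interact with the point-dependent weights $t_k$, so the off-diagonal decay must be balanced against the growth permitted by the class $\dot{X}_{\alpha,\sigma,\tilde{p}}$, and it is only the combination of Lemma \ref{lamda-equi2} with the weighted maximal inequality that makes the individual bands summable.
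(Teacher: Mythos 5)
Your reduction to the sequence space via Theorem \ref{phi-tran1} is a sound starting point, but your route is much longer than the one the paper intends, and its key step has a hole. The corollary is meant to be an immediate consequence of Theorem \ref{phi-tran1}: given two pairs $(\varphi^{(0)},\psi^{(0)})$ and $(\varphi^{(1)},\psi^{(1)})$ satisfying \eqref{Ass1}--\eqref{Ass3}, one writes $f=T_{\psi^{(0)}}S_{\varphi^{(0)}}f$ (Lemma \ref{DW-lemma1}) and uses that the bound $\|T_{\psi^{(0)}}\lambda|\dot{F}_{p,q}\|\lesssim\|\lambda|\dot{f}_{p,q}\|$ holds when the $\dot{F}_{p,q}$ quasi-norm is computed with \emph{any} analyzing function satisfying \eqref{Ass1}--\eqref{Ass2}: the proof of that bound never uses the coupling \eqref{Ass3} between $\psi^{(0)}$ and the analyzing function of the target norm. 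This yields $\|f|\dot{F}^{\varphi^{(1)}}\|=\|T_{\psi^{(0)}}S_{\varphi^{(0)}}f|\dot{F}^{\varphi^{(1)}}\|\lesssim\|S_{\varphi^{(0)}}f|\dot{f}_{p,q}\|\lesssim\|f|\dot{F}^{\varphi^{(0)}}\|$, and interchanging the two pairs finishes the proof; no transition matrix and no new estimate are needed. Your plan instead sets out to prove, for the matrix $A=S_{\varphi^{(1)}}\circ T_{\psi^{(0)}}$, a weighted almost-diagonal boundedness theorem on $\dot{f}_{p,q}(\mathbb{R}^{n},\{t_{k}\})$; but that is exactly the technical core of Theorem \ref{phi-tran1} itself (the part the paper imports from \cite{D20.1} for $p<\infty$ and obtains from Lemma \ref{lamda-equi2} together with the arguments of \cite[Theorem 2.2]{FJ90} for $p=\infty$), so even when completed it buys nothing over citing the theorem.

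The genuine gap is inside that key step. You propose to handle the resulting maximal operators with Lemma \ref{FS-inequality}, but that lemma requires $1<\theta\le p<\infty$ and $1<q<\infty$, whereas the corollary claims the full range $0<\theta\le p\le\infty$, $0<q<\infty$. For $p=\infty$ the vector-valued Fefferman--Stein inequality is not merely unavailable in the paper but false in general (this failure is precisely why $\dot{F}_{\infty,q}$ and $\dot{f}_{\infty,q}$ are defined through local means over dyadic cubes), so in that case your argument must be replaced by the cube-localized estimates built on Lemmas \ref{key-estimate1} and \ref{lamda-equi2}; and for $p\le1$, $q\le1$ or $\theta\le1$ one must first apply the standard $r$-trick (work with $|\cdot|^{r}$, $r<\min(1,\theta,q)$, so that the effective exponents exceed $1$) before any maximal inequality can be invoked. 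Neither reduction appears in your plan, so as written it does not establish the corollary in its stated generality. A smaller point: the coefficientwise identity $(S_{\varphi^{(1)}}f)_{j,h}=\sum_{k,m}\lambda^{(0)}_{k,m}\langle\psi^{(0)}_{k,m},\varphi^{(1)}_{j,h}\rangle$ requires the convergence of the series $T_{\psi^{(0)}}\lambda^{(0)}$ in $\mathcal{S}_{\infty}^{\prime}(\mathbb{R}^{n})$ (Lemma \ref{convergence} and its $p<\infty$ analogue) before the pairing can be interchanged with the sum; this should be stated rather than used tacitly.
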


\begin{theorem}
Let $\alpha =(\alpha _{1},\alpha _{2})\in \mathbb{R}^{2},0<\theta \leq
p<\infty $ and $0<q<\infty $. Let $\{t_{k}\}\in \dot{X}_{\alpha ,\sigma ,p}$
be a $p$-admissible weight sequence with $\sigma =(\sigma _{1}=\theta \left( 
\frac{p}{\theta }\right) ^{\prime },\sigma _{2}\geq p)$. $\dot{F}_{p,q}(%
\mathbb{R}^{n},\{t_{k}\})$ are quasi-Banach spaces. They are Banach spaces
if $1\leq p<\infty $ and $1\leq q<\infty $.
\end{theorem}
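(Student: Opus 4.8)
The plan is to establish the quasi-norm axioms, then obtain completeness by transferring it from the sequence space $\dot{f}_{p,q}(\mathbb{R}^{n},\{t_{k}\})$ through the $\varphi$-transform, and finally to upgrade the quasi-norm to a genuine norm in the range $1\le p,q<\infty$. Homogeneity of $\big\|\cdot|\dot{F}_{p,q}(\mathbb{R}^{n},\{t_{k}\})\big\|$ is immediate, and the quasi-triangle inequality is inherited from $L_{p}(\ell_{q})$ (with a constant depending only on $p$ and $q$), since $t_{k}\varphi_{k}\ast(f+g)=t_{k}\varphi_{k}\ast f+t_{k}\varphi_{k}\ast g$. For definiteness, suppose $\big\|f|\dot{F}_{p,q}(\mathbb{R}^{n},\{t_{k}\})\big\|=0$. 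Because each $t_{k}$ is a weight, hence positive a.e., this forces $\varphi_{k}\ast f=0$ a.e.\ for every $k\in\mathbb{Z}$, so $\mathcal{F}\varphi(2^{-k}\cdot)\widehat{f}=0$ for all $k$. Since the annuli $\{\xi:\frac{3}{5}\le|2^{-k}\xi|\le\frac{5}{3}\}$ cover $\mathbb{R}^{n}\setminus\{0\}$ and $|\mathcal{F}\varphi|\ge c$ there by \eqref{Ass2}, we conclude $\supp\widehat{f}\subset\{0\}$, i.e. $f$ is a polynomial, hence $f=0$ in $\mathcal{S}_{\infty}^{\prime}(\mathbb{R}^{n})$.

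Next I show that the sequence space $\dot{f}_{p,q}(\mathbb{R}^{n},\{t_{k}\})$ is complete. Restricting the integration defining the quasi-norm to the single cube $Q_{k,m}$ and keeping only the $(k,m)$ term yields
\[
\big\|\lambda|\dot{f}_{p,q}(\mathbb{R}^{n},\{t_{k}\})\big\|\ge 2^{kn/2}\,|\lambda_{k,m}|\,\big\|t_{k}|L_{p}(Q_{k,m})\big\|,
\]
and $p$-admissibility together with positivity of $t_{k}$ gives $0<\big\|t_{k}|L_{p}(Q_{k,m})\big\|<\infty$; thus each coordinate functional $\lambda\mapsto\lambda_{k,m}$ is bounded. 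Consequently a Cauchy sequence $\{\lambda^{(j)}\}$ is Cauchy in every coordinate and converges coordinatewise to some $\lambda=\{\lambda_{k,m}\}$. Fixing $j$ large, letting $i\to\infty$ in $\big\|\lambda^{(i)}-\lambda^{(j)}\big\|<\varepsilon$, and applying Fatou's lemma inside the $L_{p}(\ell_{q})$-quasi-norm (using coordinatewise convergence, which gives a.e.\ convergence of the integrands) yields $\big\|\lambda-\lambda^{(j)}|\dot{f}_{p,q}(\mathbb{R}^{n},\{t_{k}\})\big\|\le\varepsilon$. Hence $\lambda\in\dot{f}_{p,q}(\mathbb{R}^{n},\{t_{k}\})$ and $\lambda^{(j)}\to\lambda$, proving completeness of the sequence space.

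To pass to $\dot{F}_{p,q}(\mathbb{R}^{n},\{t_{k}\})$, I use the retract structure provided by Theorem \ref{phi-tran1}: the operators $S_{\varphi}:\dot{F}_{p,q}(\mathbb{R}^{n},\{t_{k}\})\to\dot{f}_{p,q}(\mathbb{R}^{n},\{t_{k}\})$ and $T_{\psi}:\dot{f}_{p,q}(\mathbb{R}^{n},\{t_{k}\})\to\dot{F}_{p,q}(\mathbb{R}^{n},\{t_{k}\})$ are bounded with $T_{\psi}\circ S_{\varphi}=\mathrm{id}$. Given a Cauchy sequence $\{f^{(j)}\}$ in $\dot{F}_{p,q}(\mathbb{R}^{n},\{t_{k}\})$, boundedness of $S_{\varphi}$ makes $\{S_{\varphi}f^{(j)}\}$ Cauchy in $\dot{f}_{p,q}(\mathbb{R}^{n},\{t_{k}\})$, hence convergent to some $\lambda$ by the previous step, and then boundedness of $T_{\psi}$ gives $f^{(j)}=T_{\psi}S_{\varphi}f^{(j)}\to T_{\psi}\lambda$. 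Thus $\dot{F}_{p,q}(\mathbb{R}^{n},\{t_{k}\})$ is complete. Finally, when $1\le p<\infty$ and $1\le q<\infty$, Minkowski's inequality in $\ell_{q}$ and in $L_{p}$ shows the quasi-triangle constant equals $1$, so the quasi-norm is a genuine norm and the space is Banach.

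The main obstacle here is conceptual rather than computational, since every substantive estimate is already packaged in Theorem \ref{phi-tran1}. The two points that require care are that definiteness of the quasi-norm holds \emph{only} because elements are taken modulo polynomials in $\mathcal{S}_{\infty}^{\prime}(\mathbb{R}^{n})$, and that completeness is most economically obtained through the retract argument — a bounded map admitting a bounded left inverse into a complete space — which sidesteps any direct limiting argument within $\mathcal{S}_{\infty}^{\prime}(\mathbb{R}^{n})$ and reduces everything to the elementary Fatou-type completeness of $\dot{f}_{p,q}(\mathbb{R}^{n},\{t_{k}\})$.
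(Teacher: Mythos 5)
Your proof is correct. The paper actually states this theorem \emph{without} proof (the deferral to \cite{D20.1} in the surrounding text attaches to the neighboring results), so there is no in-paper argument to compare against; your route --- quasi-norm axioms with definiteness resting on the identification of $\mathcal{S}_{\infty}^{\prime}(\mathbb{R}^{n})$ with tempered distributions modulo polynomials, completeness of $\dot{f}_{p,q}(\mathbb{R}^{n},\{t_{k}\})$ via coordinatewise convergence plus Fatou, and transfer to $\dot{F}_{p,q}(\mathbb{R}^{n},\{t_{k}\})$ through the retract identity $T_{\psi}\circ S_{\varphi}=\mathrm{id}$ of Theorem \ref{phi-tran1} --- is precisely the standard argument that the paper's toolkit is set up to support, and all the hypotheses you invoke match those required by Theorem \ref{phi-tran1}.
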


We end this section with one more theorem, where the proof is given in {\cite%
{D20.1}.}

\begin{theorem}
\label{embeddings-S-inf}Let $0<\theta \leq p<\infty $ and $0<q<\infty $.
Let\ $\{t_{k}\}\in \dot{X}_{\alpha ,\sigma ,p}$ be a $p$-admissible weight
sequence with $\sigma =(\sigma _{1}=\theta \left( \frac{p}{\theta }\right)
^{\prime },\sigma _{2}\geq p)$ and $\alpha =(\alpha _{1},\alpha _{2})\in 
\mathbb{R}^{2}$.\ We have the embedding%
\begin{equation*}
\mathcal{S}_{\infty }(\mathbb{R}^{n})\hookrightarrow \dot{F}_{p,q}(\mathbb{R}%
^{n},\{t_{k}\})\hookrightarrow \mathcal{S}_{\infty }^{\prime }(\mathbb{R}%
^{n}).
\end{equation*}%
In addition $\mathcal{S}_{\infty }(\mathbb{R}^{n})$ is dense in $\dot{F}%
_{p,q}(\mathbb{R}^{n},\{t_{k}\})\mathrm{.}$
\end{theorem}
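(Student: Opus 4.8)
The plan is to establish the two continuous embeddings and the density statement separately, transferring everything to the sequence-space level through the $\varphi$-transform. By Theorem~\ref{phi-tran1} the maps $S_\varphi:\dot F_{p,q}(\mathbb{R}^n,\{t_k\})\to\dot f_{p,q}(\mathbb{R}^n,\{t_k\})$ and $T_\psi:\dot f_{p,q}(\mathbb{R}^n,\{t_k\})\to\dot F_{p,q}(\mathbb{R}^n,\{t_k\})$ are bounded and $T_\psi\circ S_\varphi$ is the identity, so the function-space assertions can be read off from the corresponding ones for $\dot f_{p,q}$. Two facts are used throughout. First, for $g\in\mathcal{S}_\infty(\mathbb{R}^n)$ the moment conditions on $g$ together with $\operatorname{supp}\mathcal{F}\varphi\subset\{1/2\le|\xi|\le2\}$ yield, for every $N,M>0$, the two-sided decay $|\varphi_k\ast g(x)|\lesssim 2^{-|k|N}(1+|x|)^{-M}$ (fast decay as $k\to+\infty$ from the Schwartz decay of $\mathcal{F}g$, and as $k\to-\infty$ from the vanishing of $\mathcal{F}g$ to infinite order at the origin). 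Second, the hypothesis $\sigma_1=\theta(\tfrac p\theta)'$ forces, by $\mathrm{\eqref{Asum1}}$ with $j=k$, the uniform bound $A_{\frac p\theta}(t_k^p)\le c$, so that Lemma~\ref{key-estimate1} and Lemma~\ref{FS-inequality} apply and the weights $t_k^p$ are uniformly doubling.

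For the embedding $\mathcal{S}_\infty\hookrightarrow\dot F_{p,q}$ I would bound $\|g|\dot F_{p,q}(\mathbb{R}^n,\{t_k\})\|$ by a Schwartz seminorm $\|g\|_{\mathcal{S}_N}$. Using the decay above, $\big(\sum_k t_k^q|\varphi_k\ast g|^q\big)^{1/q}\lesssim(1+|x|)^{-M}\big(\sum_k 2^{-|k|Nq}t_k^q(x)\big)^{1/q}$, and by Minkowski's inequality in $L_{\frac pq}$ when $p\ge q$ and by $\ell_p\hookrightarrow\ell_q$ when $p<q$ the matter reduces to showing $\sum_k 2^{-|k|Np}\int_{\mathbb{R}^n}(1+|x|)^{-Mp}t_k^p(x)\,dx<\infty$. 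Comparing $t_k$ with $t_0$ over each fixed cube, $\mathrm{\eqref{Asum1}}$–$\mathrm{\eqref{Asum2}}$ give $M_{Q,p}(t_k)\lesssim 2^{c|k|}M_{Q,p}(t_0)$ with $c=\max(|\alpha_1|,|\alpha_2|)$, uniformly in $Q$, while the doubling property (Lemma~\ref{Ap-Property}) gives polynomial growth of $\int_{Q_{0,\nu}}t_0^p$ in $|\nu|$; choosing $N$ and $M$ large makes the double series converge, with total constant a finite multiple of $M_{Q_{0,0},p}(t_0)$.

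For the embedding $\dot F_{p,q}\hookrightarrow\mathcal{S}_\infty'$ it suffices, since $f=T_\psi S_\varphi f$, to prove the $0<p<\infty$ analogue of Lemma~\ref{convergence}: that $\sum_{k,m}|\lambda_{k,m}|\,|\langle\psi_{k,m},g\rangle|\lesssim C(g)\,\|\lambda|\dot f_{p,q}(\mathbb{R}^n,\{t_k\})\|$ for every $g\in\mathcal{S}_\infty$. I would reuse the almost-orthogonality estimate quoted in the proof of Lemma~\ref{convergence}, split the sum into $k\le0$ and $k\ge1$ exactly as the terms $I_1,I_2$ there, and replace the role of the $\dot f_{\infty,q}$-norm control used there by the pointwise bound $2^{kn/2}t_k(x)|\lambda_{k,m}|\le G(x)$ for $x\in Q_{k,m}$, where $G=\big(\sum_{k,m}2^{knq/2}t_k^q|\lambda_{k,m}|^q\chi_{k,m}\big)^{1/q}$ satisfies $\|G|L_p(\mathbb{R}^n)\|=\|\lambda|\dot f_{p,q}(\mathbb{R}^n,\{t_k\})\|$. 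Averaging this bound over $Q_{k,m}$, estimating the resulting averages of $t_k^{-1}$ by $\mathrm{\eqref{Asum1}}$ and the uniform $A_{\frac p\theta}$ property, and summing against the rapidly decaying coefficients $\langle\psi_{k,m},g\rangle$ gives the claim; composing with the bounded map $S_\varphi$ then yields $|\langle f,g\rangle|\lesssim C(g)\|f|\dot F_{p,q}(\mathbb{R}^n,\{t_k\})\|$. This adaptation of Lemma~\ref{convergence} from the $L_\infty$-based to the $L_p$-based norm is the main obstacle, the delicate point being to carry the maximal-function arguments of Lemma~\ref{key-estimate1} through for all $0<p<\infty$, using $\mathcal{M}_\tau$ with $\tau$ small when $p\le1$.

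Finally, for density I would fix $f\in\dot F_{p,q}(\mathbb{R}^n,\{t_k\})$, set $\lambda=S_\varphi f\in\dot f_{p,q}(\mathbb{R}^n,\{t_k\})$, and approximate $\lambda$ by its truncations $\lambda^{(\nu)}$ to finitely many indices $|k|,|m|\le\nu$; since $0<p,q<\infty$ the tail of the $L_p(\ell_q)$-type quasi-norm tends to $0$ by dominated convergence, so $\|\lambda-\lambda^{(\nu)}|\dot f_{p,q}(\mathbb{R}^n,\{t_k\})\|\to0$. Each $T_\psi\lambda^{(\nu)}$ is a finite linear combination of the $\psi_{k,m}$, and because $\operatorname{supp}\mathcal{F}\psi$ lies in an annulus every $\psi_{k,m}$ has all vanishing moments, hence $T_\psi\lambda^{(\nu)}\in\mathcal{S}_\infty$. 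By the boundedness of $T_\psi$ (Theorem~\ref{phi-tran1}) and $f=T_\psi\lambda$ we get $\|f-T_\psi\lambda^{(\nu)}|\dot F_{p,q}(\mathbb{R}^n,\{t_k\})\|\lesssim\|\lambda-\lambda^{(\nu)}|\dot f_{p,q}(\mathbb{R}^n,\{t_k\})\|\to0$, proving that $\mathcal{S}_\infty(\mathbb{R}^n)$ is dense.
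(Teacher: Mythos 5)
Your proposal cannot be compared against an in-paper argument, because the paper gives no proof of Theorem \ref{embeddings-S-inf} at all: it only states that "the proof is given in \cite{D20.1}". So what you wrote is a genuine reconstruction, and it is correct in all essentials; it also follows the route one would expect from the paper's own toolkit (reduction to sequence spaces via Theorem \ref{phi-tran1}, two-sided decay of $\varphi_{k}\ast g$ for $g\in\mathcal{S}_{\infty}$, an $L_{p}$-analogue of Lemma \ref{convergence} for the embedding into $\mathcal{S}_{\infty}^{\prime}$, and truncation of the $\varphi$-transform coefficients for density). The steps I checked all hold: the uniform comparison $M_{Q,p}(t_{k})\lesssim 2^{c|k|}M_{Q,p}(t_{0})$ follows from \eqref{Asum2} (using $\sigma_{2}\geq p$) for $k\geq 0$ and from \eqref{Asum1} combined with the H\"older bound $1\leq M_{Q,p}(t_{0})M_{Q,\sigma_{1}}(t_{0}^{-1})$ for $k\leq 0$; doubling of $t_{0}^{p}\in A_{p/\theta}$ (Lemma \ref{Ap-Property}) gives the polynomial growth of $t_{0}^{p}(Q_{0,\nu})$; and $T_{\psi}\lambda^{(\nu)}\in\mathcal{S}_{\infty}$ together with dominated convergence (legitimate since $p,q<\infty$) gives density. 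Two remarks. First, your opening assertion that Lemmas \ref{key-estimate1} and \ref{FS-inequality} "apply" is an overstatement: both are stated only for $1<\theta\leq p$, whereas the theorem allows $0<\theta\leq p$ with $p\leq 1$; your later fix (replace $\mathcal{M}$ by $\mathcal{M}_{\tau}$ with $\tau$ small, using $A_{p/\theta}\subset A_{p/\tau}$) is the right one, but it is a hypothesis you need from the start, not only "when $p\leq 1$". Second, the part you call the main obstacle can be removed entirely: no maximal function is needed for $\dot{F}_{p,q}\hookrightarrow\mathcal{S}_{\infty}^{\prime}$. Raising your pointwise bound $2^{kn/2}t_{k}(x)|\lambda_{k,m}|\leq G(x)$, $x\in Q_{k,m}$, to the power $p$ and averaging over $Q_{k,m}$ gives directly
\begin{equation*}
|\lambda_{k,m}|\leq 2^{kn(\frac{1}{p}-\frac{1}{2})}\,\frac{\big\|\lambda|\dot{f}_{p,q}(\mathbb{R}^{n},\{t_{k}\})\big\|}{M_{Q_{k,m},p}(t_{k})},
\end{equation*}
and the resulting sum $\sum_{k,m}2^{kn(1/p-1/2)}|\langle\psi_{k,m},g\rangle|/M_{Q_{k,m},p}(t_{k})$ converges for $g\in\mathcal{S}_{\infty}$ by the almost-orthogonality estimate quoted in the proof of Lemma \ref{convergence}, because $1/M_{Q_{k,m},p}(t_{k})$ grows at most polynomially in $2^{|k|}$ and $|m|$: this follows from $1/M_{Q_{k,m},p}(t_{k})\leq M_{Q_{k,m},\sigma_{1}}(t_{k}^{-1})$, enlarging $Q_{k,m}$ to the smallest cube $Q$ containing $Q_{k,m}\cup Q_{0,0}$, and then applying \eqref{Asum1} and \eqref{Asum2} on $Q$ and on $Q_{0,0}$. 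This closes your "delicate point" with only H\"older, \eqref{Asum1}, \eqref{Asum2} and the trivial monotonicity of averages, uniformly for all $0<\theta\leq p<\infty$.
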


\section{Duality}

In this section we identify the duals of $\dot{F}_{p,q}(\mathbb{R}%
^{n},\{t_{k}\})$ spaces. The classical case, $\{t_{k}\}=\{2^{ks}\},s\in 
\mathbb{R}$, this was done in \cite[p. 176]{T1} and \cite[Sections 5 and 8]%
{FJ90}, while the anisotropic case is given in \cite{M08}.

We reduce the problem to corresponding sequence spaces. Before proving the
duality of these function spaces we present some results, which appeared in
the paper of Frazier and Jawerth \cite{FJ90} for classical Besov and
Triebel-Lizorkin spaces.

\begin{proposition}
\label{prop1}Let $0<\theta \leq q<\infty $. Let $\{t_{k}\}$ be a $q$%
-admissible weight sequence satisfying $\mathrm{\eqref{Asum1}}$ with $\sigma
_{1}=\theta \left( \frac{q}{\theta }\right) ^{\prime }$, $p=q$ and $j=k$.%
\textit{\ Suppose that for each dyadic cube }$Q_{k,m}$ there is a set $%
E_{Q_{k,m}}\subseteq Q_{k,m}$ with $|E_{Q_{k,m}}|>\varepsilon |Q_{k,m}|$, $%
\varepsilon >0$. Then%
\begin{equation*}
{\big\|}\lambda |\dot{f}_{\infty ,q}(\mathbb{R}^{n},\{t_{k}\}){\big\|}%
\approx \sup_{P\in \mathcal{Q},}\Big(\frac{1}{|P|}\int_{P}\sum\limits_{k=-%
\log _{2}l(P)}^{\infty }\sum\limits_{m\in \mathbb{Z}^{n}}2^{\frac{knq}{2}%
}t_{k}^{q}(x)|\lambda _{k,m}|^{q}\chi _{E_{Q_{k,m}}}(x)dx\Big)^{\frac{1}{q}}.
\end{equation*}
\end{proposition}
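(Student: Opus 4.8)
The plan is to establish the two inequalities separately; their combination is the asserted equivalence. One direction is immediate. Since $E_{Q_{k,m}}\subseteq Q_{k,m}$ we have $\chi_{E_{Q_{k,m}}}\le\chi_{k,m}$ pointwise, so the integrand appearing on the right-hand side of the proposition is dominated everywhere by the one defining $\big\|\lambda|\dot{f}_{\infty,q}(\mathbb{R}^n,\{t_k\})\big\|$; taking the supremum over $P\in\mathcal{Q}$ and the $q$-th root preserves this domination, giving
\[
\sup_{P\in\mathcal{Q}}\Big(\frac{1}{|P|}\int_P\sum_{k=-\log_2 l(P)}^{\infty}\sum_{m\in\mathbb{Z}^n}2^{\frac{knq}{2}}t_k^q(x)|\lambda_{k,m}|^q\chi_{E_{Q_{k,m}}}(x)\,dx\Big)^{\frac{1}{q}}\le\big\|\lambda|\dot{f}_{\infty,q}(\mathbb{R}^n,\{t_k\})\big\|.
\]
The substance of the proposition is therefore the reverse inequality.

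The engine of the reverse inequality will be a mass comparison, uniform in $k$ and $m$, between $Q_{k,m}$ and its subset $E_{Q_{k,m}}$. First I would unwind the hypothesis: writing $M_{Q,q}(t_k)=\big(M_Q(t_k^q)\big)^{1/q}$ and $M_{Q,\sigma_1}(t_k^{-1})=\big(M_Q(t_k^{-\sigma_1})\big)^{1/\sigma_1}$ and using that $\sigma_1=\theta(q/\theta)'=q/(q/\theta-1)$, a direct computation shows that the diagonal case $j=k$ of $\mathrm{\eqref{Asum1}}$ is exactly the statement $A_{q/\theta}(t_k^q)\le C_1^q$ for every cube; that is, $t_k^q\in A_{q/\theta}(\mathbb{R}^n)$ with a Muckenhoupt constant bounded uniformly in $k$, as recorded in the remark following Definition~\ref{Tyulenev-class}. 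I would then apply Lemma~\ref{Ap-Property}/(ii) to the weight $\gamma=t_k^q$ with exponent $q/\theta$, cube $Q=Q_{k,m}$ and subset $E=E_{Q_{k,m}}$; combined with $|E_{Q_{k,m}}|>\varepsilon|Q_{k,m}|$ this yields
\[
\int_{Q_{k,m}}t_k^q(x)\,dx\le c\,\varepsilon^{-q/\theta}\int_{E_{Q_{k,m}}}t_k^q(x)\,dx,\qquad k\in\mathbb{Z},\ m\in\mathbb{Z}^n,
\]
with $c$ independent of $k,m$ precisely because the $A_{q/\theta}$-constants are uniform in $k$. The endpoint $\theta=q$ (where $q/\theta=1$) is handled identically by the $A_1$ form of Lemma~\ref{Ap-Property}/(ii).

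Finally I would assemble the bound cube by cube. Fix a dyadic cube $P$ and set $k_P:=-\log_2 l(P)$. By the nesting of dyadic cubes, for every $k\ge k_P$ each $Q_{k,m}$ is either contained in $P$ or disjoint from it, so $\int_P\chi_{k,m}\,t_k^q\,dx=\int_{Q_{k,m}}t_k^q\,dx$ when $Q_{k,m}\subseteq P$ and vanishes otherwise, and likewise $\int_P\chi_{E_{Q_{k,m}}}t_k^q\,dx=\int_{E_{Q_{k,m}}}t_k^q\,dx$ in the contained case (here $E_{Q_{k,m}}\subseteq Q_{k,m}\subseteq P$). Hence
\[
\frac{1}{|P|}\int_P\sum_{k=k_P}^{\infty}\sum_{m\in\mathbb{Z}^n}2^{\frac{knq}{2}}t_k^q|\lambda_{k,m}|^q\chi_{k,m}\,dx=\frac{1}{|P|}\sum_{k=k_P}^{\infty}\sum_{Q_{k,m}\subseteq P}2^{\frac{knq}{2}}|\lambda_{k,m}|^q\int_{Q_{k,m}}t_k^q\,dx.
\]
Replacing each $\int_{Q_{k,m}}t_k^q$ by $c\,\varepsilon^{-q/\theta}\int_{E_{Q_{k,m}}}t_k^q$ through the mass comparison and re-reading the sum over $\{m:Q_{k,m}\subseteq P\}$ as an integral over $P$ bounds the right-hand side by $c\,\varepsilon^{-q/\theta}$ times $\frac{1}{|P|}\int_P\sum_{k\ge k_P}\sum_m 2^{knq/2}t_k^q|\lambda_{k,m}|^q\chi_{E_{Q_{k,m}}}\,dx$; taking the supremum over $P\in\mathcal{Q}$ and the $q$-th root then gives $\big\|\lambda|\dot{f}_{\infty,q}(\mathbb{R}^n,\{t_k\})\big\|\lesssim\varepsilon^{-1/\theta}$ times the right-hand side of the proposition. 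The only genuinely non-routine step is this uniform mass comparison—identifying the diagonal case of $\mathrm{\eqref{Asum1}}$ with uniform membership $t_k^q\in A_{q/\theta}(\mathbb{R}^n)$ and invoking the subset inequality of Lemma~\ref{Ap-Property}/(ii); the remaining two steps are the trivial pointwise domination and dyadic bookkeeping, so no maximal-function inequality is required.
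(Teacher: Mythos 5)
Your proof is correct, but it follows a genuinely different route from the paper's. For the nontrivial direction the paper uses the Frazier--Jawerth maximal-function device: from $|E_{Q}|>\varepsilon|Q|$ one gets the pointwise bound $\chi_{Q}\leq c\,\mathcal{M}_{\varrho}(\chi_{E_{Q}\cap Q\cap P})$ for $Q\subset P\in\mathcal{Q}$ with $0<\varrho<\min(1,\theta)$, and then invokes the uniform weighted boundedness of the maximal operator (Lemma \ref{key-estimate1}); the restriction $\varrho<\min(1,\theta)$ is exactly the rescaling that makes that maximal inequality usable also when $\theta\leq 1$. You dispense with the maximal function entirely: you correctly identify the diagonal case $j=k$ of \eqref{Asum1} with $\sigma_1=\theta(q/\theta)'$ as the uniform bound $A_{q/\theta}(t_k^q)\leq C_1^q$ (the same weight information the paper needs in order to apply Lemma \ref{key-estimate1}), then apply the subset inequality of Lemma \ref{Ap-Property}/(ii) to get the mass comparison $\int_{Q_{k,m}}t_k^q\,dx\leq c\,\varepsilon^{-q/\theta}\int_{E_{Q_{k,m}}}t_k^q\,dx$ uniformly in $k,m$, and finally sum cube by cube inside each dyadic $P$, using that for $k\geq k_P$ every $Q_{k,m}$ is either contained in or disjoint from $P$. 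Both arguments hinge on the same $A_{q/\theta}$ membership, but yours is more elementary: purely measure-theoretic once that membership is identified, with no maximal inequality and no $\varrho$-rescaling, and it yields the explicit constant $\approx\varepsilon^{-1/\theta}$ after taking the $q$-th root. What the paper's approach buys is uniformity of method — the same maximal-function template drives Lemma \ref{convergence} and Lemma \ref{lamda-equi2}, where the relevant sets are balls not adapted to the dyadic grid and your cube-by-cube accounting would not apply — but for this particular proposition your argument is leaner. The one point your write-up leans on implicitly is that the constant in Lemma \ref{Ap-Property}/(ii) depends only on $n$, $q/\theta$ and $A_{q/\theta}(\gamma)$ (indeed one can take essentially $C=A_{q/\theta}(\gamma)$); you do flag this as the source of uniformity in $k$ and $m$, and it is the standard form of that lemma, so the step is sound.
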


\begin{proof}
Since $\chi _{E_{Q}}\leq \chi _{Q}$ for all $Q\in \mathcal{Q}$, one the
direction is trivial. For the other, we use the estimate $\chi _{Q}\leq c$ $%
\mathcal{M}_{\varrho }(\chi _{E_{Q}\cap Q\cap P})$ for all $Q\subset P\in 
\mathcal{Q}$ with $0<\varrho <\min (1,\theta )$. Now Lemma \ref%
{key-estimate1} gives the desired estimate.
\end{proof}

\begin{remark}
Let $0<\theta \leq q<\infty $. Let $\{t_{k}\}$ be a $q$-admissible weight
sequence satisfying $\mathrm{\eqref{Asum1}}$ with $\sigma _{1}=\theta \left( 
\frac{q}{\theta }\right) ^{\prime }$, $p=q$ and $j=k$\textit{. }Suppose that
for each dyadic cube\textit{\ }$Q_{k,m}$ there is a set $E_{Q_{k,m}}%
\subseteq Q_{k,m}$ with $|E_{Q_{k,m}}|>\varepsilon |Q_{k,m}|$, $\varepsilon
>0$. Then%
\begin{equation*}
{\big\|}\lambda |\dot{f}_{\infty ,q}(\mathbb{R}^{n},\{t_{k}\}){\big\|}%
\lesssim \Big\|\Big(\sum\limits_{k=-\infty }^{\infty }\sum\limits_{m\in 
\mathbb{Z}^{n}}2^{\frac{knq}{2}}t_{k}^{q}|\lambda _{k,m}|^{q}\chi
_{E_{Q_{k,m}}}\Big)^{1/q}|L_{\infty }(\mathbb{R}^{n})\Big\|.
\end{equation*}
\end{remark}

For any dyadic cube $P$, we set 
\begin{equation*}
G_{P}^{q}(\lambda ,\{t_{k}\})(x):=\Big(\sum\limits_{k=-\log
_{2}l(P)}^{\infty }\sum\limits_{h\in \mathbb{Z}^{n}}2^{\frac{knq}{2}%
}t_{k}^{q}(x)|\lambda _{k,h}|^{q}\chi _{k,h}(x)\Big)^{1/q}.
\end{equation*}%
We put%
\begin{equation}
m_{P}^{q}(\lambda ,\{t_{k}\}):=\inf \Big\{\varepsilon :|\{x\in
P:G_{P}^{q}(\lambda ,\{t_{k}\})(x)>\varepsilon \}|<\frac{|P|}{4}\Big\}.
\label{inf-set}
\end{equation}%
We also set%
\begin{equation*}
m^{q}(\lambda ,\{t_{k}\})(x)=\sup_{P}m_{P}^{q}(\lambda ,\{t_{k}\})\chi
_{P}(x).
\end{equation*}%
Then we obtain.

\begin{proposition}
\label{prop2}Let $0<\theta \leq q<\infty $. Let $\{t_{k}\}$ be a $q$%
-admissible weight sequence satisfying $\mathrm{\eqref{Asum1}}$ with $\sigma
_{1}=\theta \left( \frac{q}{\theta }\right) ^{\prime }$, $p=q$ and $j=k$%
\textit{. Then}%
\begin{equation*}
{\big\|}\lambda |\dot{f}_{\infty ,q}(\mathbb{R}^{n},\{t_{k}\}){\big\|}%
\approx {\big\|}m^{q}(\lambda ,\{t_{k}\})|L_{\infty }(\mathbb{R}^{n}){\big\|}%
.
\end{equation*}
\end{proposition}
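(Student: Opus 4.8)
The plan is to pass to the equivalent formulation
$\|\lambda|\dot f_{\infty,q}(\mathbb R^n,\{t_k\})\|^q=\sup_{P\in\mathcal Q}\frac1{|P|}\int_P H_P^q(x)\,dx$,
where I abbreviate $H_P:=G_P^q(\lambda,\{t_k\})$, and to note that since each $m_P^q(\lambda,\{t_k\})$ is a constant attained on the positive-measure set $P$, one has $\|m^q(\lambda,\{t_k\})|L_\infty(\mathbb R^n)\|=\sup_{P\in\mathcal Q}m_P^q(\lambda,\{t_k\})=:A$. The inequality $A\lesssim\|\lambda|\dot f_{\infty,q}(\mathbb R^n,\{t_k\})\|$ is the easy half and needs no hypothesis on the weights: for each dyadic $P$, Chebyshev's inequality gives $|\{x\in P:H_P(x)>\varepsilon\}|\le\varepsilon^{-q}\int_P H_P^q\le\varepsilon^{-q}|P|\,\|\lambda|\dot f_{\infty,q}(\mathbb R^n,\{t_k\})\|^q$, which is $<|P|/4$ as soon as $\varepsilon>4^{1/q}\|\lambda|\dot f_{\infty,q}(\mathbb R^n,\{t_k\})\|$; hence $m_P^q\le 4^{1/q}\|\lambda|\dot f_{\infty,q}(\mathbb R^n,\{t_k\})\|$ and I take the supremum over $P$.

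For the reverse inequality I would fix a dyadic cube $P_0$, write $k_0=-\log_2 l(P_0)$, and prove the Carleson-type bound $\int_{P_0}H_{P_0}^q\,dx\lesssim A^q|P_0|$ by a Calder\'on--Zygmund stopping-time decomposition. Since $m_{P_0}^q\le A$, the bad set $B:=\{x\in P_0:H_{P_0}(x)>A\}$ has $|B|\le|P_0|/4$, so $\int_{P_0\setminus B}H_{P_0}^q\le A^q|P_0|$. Let $\{Q_j\}$ be the maximal dyadic subcubes $Q\subsetneq P_0$ with $|Q\cap B|>\tfrac12|Q|$; these are pairwise disjoint, cover $B$ up to a null set by Lebesgue differentiation, and satisfy $\sum_j|Q_j|\le 2|B|\le\tfrac12|P_0|$, while $P_0$ itself is not selected because $|B|<\tfrac12|P_0|$. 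For $x$ in a dyadic $Q\subseteq P_0$ one has the splitting $H_{P_0}^q(x)=L_Q(x)+H_Q^q(x)$ into the coarse scales $k_0\le k<k_Q:=-\log_2 l(Q)$ and the fine scales $k\ge k_Q$, where $L_Q(x)=\sum_{k_0\le k<k_Q}\sum_h 2^{knq/2}t_k^q(x)|\lambda_{k,h}|^q\chi_{k,h}(x)$; thus $\int_B H_{P_0}^q\le\sum_j\int_{Q_j}L_{Q_j}+\sum_j\int_{Q_j}H_{Q_j}^q$.

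The crux is the estimate $\int_{Q_j}L_{Q_j}\lesssim A^q|Q_j|$, and here I would exploit maximality. The dyadic parent $\widehat Q_j$ is not selected, so the good set $\mathcal G:=\{y\in\widehat Q_j:H_{P_0}(y)\le A\}$ has $|\mathcal G|\ge\tfrac12|\widehat Q_j|$. Writing $h_k$ for the unique index with $Q_j\subseteq Q_{k,h_k}$, the coarse indicators $\chi_{k,h_k}$, $k_0\le k<k_{Q_j}$, are all identically $1$ on $\widehat Q_j$, so $L_{Q_j}$ is given on $\widehat Q_j$ by $\sum_{k_0\le k<k_{Q_j}}2^{knq/2}t_k^q(\cdot)|\lambda_{k,h_k}|^q$ and $H_{P_0}^q(y)\ge L_{Q_j}(y)$ for every $y\in\widehat Q_j$; in particular $L_{Q_j}\le A^q$ on $\mathcal G$. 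Since $\{t_k\}$ satisfies $\mathrm{\eqref{Asum1}}$ with $\sigma_1=\theta(q/\theta)'$ and $j=k$, the weights $t_k^q$ belong to $A_{q/\theta}(\mathbb R^n)$ with a uniform constant, and Lemma \ref{Ap-Property}/(ii), applied to the half-measure subset $\mathcal G\subseteq\widehat Q_j$, gives $\int_{Q_j}t_k^q\le\int_{\widehat Q_j}t_k^q\le C\int_{\mathcal G}t_k^q$ with $C$ independent of $k$ and $j$. Summing over the coarse scales then yields $\int_{Q_j}L_{Q_j}=\sum_{k_0\le k<k_{Q_j}}2^{knq/2}|\lambda_{k,h_k}|^q\int_{Q_j}t_k^q\le C\int_{\mathcal G}L_{Q_j}\le C A^q|\mathcal G|\le C A^q|Q_j|$.

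Finally I would close the estimate by absorption. Combining the three pieces gives $\int_{P_0}H_{P_0}^q\le (1+C)A^q|P_0|+\sum_j\int_{Q_j}H_{Q_j}^q$, and since the $Q_j$ are again dyadic with $\sum_j|Q_j|\le\tfrac12|P_0|$, setting $\Phi:=\sup_{Q\in\mathcal Q}\frac1{|Q|}\int_Q H_Q^q$ leads to $\int_{P_0}H_{P_0}^q\le(1+C)A^q|P_0|+\tfrac12\Phi|P_0|$; dividing by $|P_0|$ and taking the supremum over $P_0$ gives $\Phi\le(1+C)A^q+\tfrac12\Phi$, hence $\Phi\le 2(1+C)A^q$, which is the claim. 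This absorption is legitimate only once $\Phi<\infty$ is known a priori, so I would first run the whole argument for sequences $\lambda$ with finitely many nonzero entries (equivalently, with the frequency sum truncated), for which $\Phi$ is trivially finite and all constants are independent of the truncation, and then pass to the general case by monotone convergence. The main obstacle is exactly the crux estimate of the third paragraph: transferring the pointwise control of the coarse-scale part $L_{Q_j}$ from a good point of the parent cube to the average over the stopping cube $Q_j$, which is precisely where the uniform $A_{q/\theta}$-membership of the weights $t_k^q$ --- i.e.\ hypothesis $\mathrm{\eqref{Asum1}}$ with $j=k$ --- is indispensable.
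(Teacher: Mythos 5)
Your proof is correct, but for the non-trivial direction it takes a genuinely different route from the paper's. The easy half (Chebyshev) is identical. For the reverse inequality the paper does not run a stopping-time/absorption argument on $\int_P (G_P^q)^q$; instead it introduces the stopping scale
\begin{equation*}
j(x)=\inf\Big\{j\in\mathbb{Z}:\Big(\sum_{k=j}^{\infty}\sum_{h\in\mathbb{Z}^n}2^{\frac{knq}{2}}t_k^q(x)|\lambda_{k,h}|^q\chi_{k,h}(x)\Big)^{1/q}\le m^{q}(\lambda,\{t_k\})(x)\Big\}
\end{equation*}
and the sets $E_{Q_{k,h}}=\{x\in Q_{k,h}:G^{q}_{Q_{k,h}}(\lambda,\{t_k\})(x)\le m^{q}(\lambda,\{t_k\})(x)\}$, observes that $|E_{Q_{k,h}}|\ge\tfrac34|Q_{k,h}|$ and that the full sum restricted to these sets is pointwise dominated by $c\,m^{q}(\lambda,\{t_k\})(x)$, and then simply invokes Proposition \ref{prop1}; the weight hypothesis enters there through the estimate $\chi_Q\le c\,\mathcal{M}_{\varrho}(\chi_{E_Q\cap Q\cap P})$ and the uniform weighted maximal inequality of Lemma \ref{key-estimate1}. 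You replace all of this by a Calder\'on--Zygmund selection of maximal cubes on which the bad set $B=\{G_{P_0}^q>A\}$ has density greater than $\tfrac12$, transfer the coarse-scale part to the parent cube via Lemma \ref{Ap-Property}/(ii), and absorb the fine-scale part; the hypothesis $\mathrm{\eqref{Asum1}}$ with $j=k$ enters only through the uniform $A_{q/\theta}$ comparison of $\int_{\widehat Q_j}t_k^q$ with $\int_{\mathcal G}t_k^q$, and no maximal function is used at all. What the paper's route buys: it reuses Proposition \ref{prop1}, which is already established, and it produces exactly the sets $E_{Q_{k,h}}$ and the pointwise bound that are used again verbatim in the proof of Theorem \ref{main-result-duality1}; it also has no a priori finiteness issue. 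What your route buys: it is self-contained and more elementary, needing only the $A_p$ measure-comparison property rather than the vector-valued maximal machinery, at the cost of the truncation/monotone-convergence step required to make the absorption legitimate --- a step you correctly identify and handle, since for finitely supported $\lambda$ the quantity $\Phi$ is finite and all constants are independent of the truncation. Two cosmetic points: in your crux estimate one has $|\mathcal G|\le|\widehat Q_j|=2^n|Q_j|$ rather than $|\mathcal G|\le|Q_j|$ (a harmless constant), and when $\widehat Q_j=P_0$ the bound $|\mathcal G|\ge\tfrac12|\widehat Q_j|$ follows from $|B|\le\tfrac14|P_0|$ rather than from maximality, as your parenthetical already indicates.
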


\begin{proof}
We use the arguments of \cite[Proposition 5.5]{FJ90}. Let $P$ be any dyadic
cube. We use the Chebyshev inequality, 
\begin{equation*}
|\{x\in P:G_{P}^{q}(\lambda ,\{t_{k}\})(x)>\varepsilon \}|
\end{equation*}%
is dominated by%
\begin{align*}
\frac{1}{\varepsilon ^{q}}\int_{P}(G_{P}^{q}(\lambda ,\{t_{k}\})(x))^{q}dx =&%
\frac{|P|}{\varepsilon ^{q}|P|}{\big\|}G_{P}^{q}(\lambda ,\{t_{k}\})\chi
_{P}|L_{q}(\mathbb{R}^{n}){\big\|}^{q} \\
\leq &\frac{|P|}{\varepsilon ^{q}}{\big\|}\lambda |\dot{f}_{\infty ,q}(%
\mathbb{R}^{n},\{t_{k}\}){\big\|}^{q}.
\end{align*}%
This term is less than to $\frac{|P|}{4}$ if $\varepsilon >4^{\frac{1}{q}}{%
\big\|}\lambda |\dot{f}_{\infty ,q}(\mathbb{R}^{n},\{t_{k}\}){\big\|}$.\
Hence, 
\begin{equation*}
{\big\|}m^{q}(\lambda ,\{t_{k}\})|L_{\infty }(\mathbb{R}^{n}){\big\|}\leq c{%
\big\|}\lambda |\dot{f}_{\infty ,q}(\mathbb{R}^{n},\{t_{k}\}){\big\|}.
\end{equation*}%
Now let 
\begin{align*}
& j(x) \\
=& \inf \Big\{j\in \mathbb{Z}:\Big(\sum_{k=j}^{\infty }\sum\limits_{h\in 
\mathbb{Z}^{n}}2^{\frac{knq}{2}}t_{k}^{q}(x)|\lambda _{k,h}|^{q}\chi
_{k,h}(x)\Big)^{1/q}\leq m^{q}(\lambda ,\{t_{k}\})(x)\Big\}.
\end{align*}%
and%
\begin{align*}
E_{Q_{k,h}}=& \left\{ x\in Q_{k,h}:2^{-j(x)}\geq l(Q_{k,h})\right\} \\
=& \Big\{x\in Q_{k,h}:G_{Q_{k,h}}^{q}(\lambda ,\{t_{k}\})(x)\leq
m^{q}(\lambda ,\{t_{k}\})(x)\Big\}
\end{align*}%
for any dyadic cube $Q_{k,h}$, $k\in \mathbb{Z}$ and $h\in \mathbb{Z}^{n}$.
By $\mathrm{\eqref{inf-set}}$, $|E_{Q_{k,h}}|\geq \frac{3|Q_{k,h}|}{4}$, and 
\begin{equation*}
\Big(\sum\limits_{k=-\infty }^{\infty }\sum\limits_{h\in \mathbb{Z}^{n}}2^{%
\frac{knq}{2}}t_{k}^{q}(x)|\lambda _{k,h}|^{q}\chi _{E_{Q_{k,h}}}(x)\Big)%
^{1/q}\leq c\text{ }m^{q}(\lambda ,\{t_{k}\})(x).
\end{equation*}%
From the last estimate and Proposition \ref{prop1}, we deduce that 
\begin{equation*}
{\big\|}\lambda |\dot{f}_{\infty ,q}(\mathbb{R}^{n},\{t_{k}\}){\big\|}%
\lesssim {\big\|}m^{q}(\lambda ,\{t_{k}\})|L_{\infty }(\mathbb{R}^{n}){\big\|%
}.
\end{equation*}
\end{proof}

\begin{remark}
Let $0<\theta \leq p<\infty ,0<q<\infty $. Let $\{t_{k}\}$ be a $p$%
-admissible weight sequence satisfying $\mathrm{\eqref{Asum1}}$ with $\sigma
_{1}=\theta \left( \frac{p}{\theta }\right) ^{\prime }$ and $j=k$. \textit{%
Then}%
\begin{equation*}
{\big\|}\lambda |\dot{f}_{p,q}(\mathbb{R}^{n},\{t_{k}\}){\big\|}\approx {%
\big\|}m^{q}(\lambda ,\{t_{k}\})|L_{p}(\mathbb{R}^{n}){\big\|}.
\end{equation*}
\end{remark}

By this proposition and Proposition \ref{prop1}, we obtain another
equivalent quasi-norm of $\dot{f}_{\infty ,q}(\mathbb{R}^{n},\{t_{k}\})$
spaces.

\begin{proposition}
\label{norm-equiv11}Let $0<\theta \leq q<\infty $. Let $\{t_{k}\}$ be a $q$%
-admissible weight sequence satisfying $\mathrm{\eqref{Asum1}}$ with $\sigma
_{1}=\theta \left( \frac{q}{\theta }\right) ^{\prime }$, $p=q$ and $j=k$%
\textit{. Then }$\lambda =\{\lambda _{k,m}\}_{k\in \mathbb{Z},m\in \mathbb{Z}%
^{n}}\in \dot{f}_{\infty ,q}(\mathbb{R}^{n},\{t_{k}\})$ \textit{if and only
if for each \ dyadic cube }$Q_{k,m}$ there is a subset $E_{Q_{k,m}}\subset
Q_{k,m}$ with $|E_{Q_{k,m}}|>\frac{|Q_{k,m}|}{2}$ (or any other, fixed,
number $0<\varepsilon <1$) such that%
\begin{equation*}
\Big\|\Big(\sum_{k=-\infty }^{\infty }\sum\limits_{m\in \mathbb{Z}^{n}}2^{%
\frac{knq}{2}}t_{k}^{q}(x)|\lambda _{k,m}|^{q}\chi _{E_{Q_{k,m}}}\Big)%
^{1/q}|L_{\infty }(\mathbb{R}^{n})\Big\|<\infty .
\end{equation*}%
Moreover, the infimum of this expression over all such collections $%
\{E_{Q_{k,m}}\}_{k\in \mathbb{Z},m\in \mathbb{Z}^{n}}$ is equivalent to ${%
\big\|}\lambda |\dot{f}_{\infty ,q}(\mathbb{R}^{n},\{t_{k}\}){\big\|}$.
\end{proposition}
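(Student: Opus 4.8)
The plan is to prove the two-sided estimate
\[
\big\|\lambda |\dot{f}_{\infty ,q}(\mathbb{R}^{n},\{t_{k}\})\big\|\approx \inf_{\{E_{Q_{k,m}}\}}\Big\|\Big(\sum_{k,m}2^{\frac{knq}{2}}t_{k}^{q}|\lambda _{k,m}|^{q}\chi _{E_{Q_{k,m}}}\Big)^{1/q}|L_{\infty }(\mathbb{R}^{n})\Big\|,
\]
where the infimum runs over all collections with $|E_{Q_{k,m}}|>\tfrac{1}{2}|Q_{k,m}|$. The ``if and only if'' then follows immediately from the finiteness of one side being equivalent to that of the other. I would obtain the lower bound (the quasi-norm is dominated by \emph{every} admissible collection, hence by the infimum) from Proposition \ref{prop1}, and the upper bound (the infimum is dominated by the quasi-norm) by exhibiting the single explicit collection built in the proof of Proposition \ref{prop2}.

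For the lower bound, fix any collection $\{E_{Q_{k,m}}\}$ with $|E_{Q_{k,m}}|>\tfrac{1}{2}|Q_{k,m}|$. Applying Proposition \ref{prop1} with $\varepsilon=\tfrac{1}{2}$ gives
\[
\big\|\lambda |\dot{f}_{\infty ,q}(\mathbb{R}^{n},\{t_{k}\})\big\|\approx \sup_{P\in \mathcal{Q}}\Big(\frac{1}{|P|}\int_{P}\sum_{k=-\log _{2}l(P)}^{\infty }\sum_{m\in \mathbb{Z}^{n}}2^{\frac{knq}{2}}t_{k}^{q}(x)|\lambda _{k,m}|^{q}\chi _{E_{Q_{k,m}}}(x)\,dx\Big)^{\frac{1}{q}}.
\]
Since the truncated sum over $k\geq -\log_2 l(P)$ is dominated by the full sum over $k\in\mathbb{Z}$, the integrand is bounded a.e.\ by the $q$-th power of the displayed $L_{\infty}$ quantity; averaging over $P$ and taking the supremum therefore yields $\big\|\lambda |\dot{f}_{\infty ,q}(\mathbb{R}^{n},\{t_{k}\})\big\|\lesssim \big\|(\sum_{k,m}2^{\frac{knq}{2}}t_{k}^{q}|\lambda _{k,m}|^{q}\chi _{E_{Q_{k,m}}})^{1/q}|L_{\infty}(\mathbb{R}^{n})\big\|$. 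As this holds for every admissible collection, I may pass to the infimum.

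For the upper bound I would reuse the set from the proof of Proposition \ref{prop2}, namely
\[
E_{Q_{k,h}}:=\Big\{x\in Q_{k,h}:G_{Q_{k,h}}^{q}(\lambda ,\{t_{k}\})(x)\leq m^{q}(\lambda ,\{t_{k}\})(x)\Big\}.
\]
As recorded there, $|E_{Q_{k,h}}|\geq \tfrac{3}{4}|Q_{k,h}|>\tfrac{1}{2}|Q_{k,h}|$ and $\big(\sum_{k,h}2^{\frac{knq}{2}}t_{k}^{q}|\lambda _{k,h}|^{q}\chi _{E_{Q_{k,h}}}\big)^{1/q}\leq c\,m^{q}(\lambda ,\{t_{k}\})$ pointwise. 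Taking $L_{\infty}$-norms and invoking $\big\|m^{q}(\lambda ,\{t_{k}\})|L_{\infty}(\mathbb{R}^{n})\big\|\approx \big\|\lambda |\dot{f}_{\infty ,q}(\mathbb{R}^{n},\{t_{k}\})\big\|$ from Proposition \ref{prop2} shows that this particular collection meets the finiteness condition and that its associated $L_{\infty}$ quantity is $\lesssim \big\|\lambda |\dot{f}_{\infty ,q}(\mathbb{R}^{n},\{t_{k}\})\big\|$; hence so is the infimum. Replacing $\tfrac{1}{2}$ by an arbitrary $\varepsilon\in(0,1)$ costs only a change of constants: one applies Proposition \ref{prop1} with that $\varepsilon$, and if $\varepsilon\geq\tfrac{3}{4}$ one enlarges $E_{Q_{k,h}}$ by replacing the fraction $\tfrac{1}{4}$ in the definition \eqref{inf-set} of $m_P^q$ by $1-\varepsilon$ (which alters Proposition \ref{prop2} only through its constant via the Chebyshev step).

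Because Propositions \ref{prop1} and \ref{prop2} already carry the analytic weight (the maximal inequality of Lemma \ref{key-estimate1} and the level-set argument, respectively), there is no genuine obstacle here. The only points requiring care are that the lower-bound inequality must be shown uniformly over all admissible collections so that it survives passage to the infimum, and that the explicit collection furnished by Proposition \ref{prop2} does clear the prescribed measure threshold.
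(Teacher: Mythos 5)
Your proposal is correct and matches the paper's own route: the paper derives this proposition precisely by combining Proposition \ref{prop1} (giving the bound of the quasi-norm by every admissible collection, hence by the infimum) with the explicit sets $E_{Q_{k,h}}$ and the pointwise bound by $m^{q}(\lambda,\{t_{k}\})$ constructed in the proof of Proposition \ref{prop2}. Your additional remarks on passing uniformly to the infimum and on adjusting the constant $\tfrac14$ in \eqref{inf-set} to handle an arbitrary threshold $\varepsilon\in(0,1)$ are sound refinements of the same argument.
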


Suppose that $1\leq p\leq \infty $. In the classical Lebesgue space,%
\begin{equation*}
{\big\|}f|L_{p}(\mathbb{R}^{n}){\big\|}=\sup \left\vert \int_{\mathbb{R}%
^{n}}f(x)g(x)dx\right\vert ,
\end{equation*}%
where the supremum is taken over all $g\in L_{p^{\prime }}(\mathbb{R}^{n})$
with ${\big\|}g|L_{p^{\prime }}(\mathbb{R}^{n}){\big\|}\leq 1$.

Our aim is to extend this result to $\dot{f}_{\infty ,q}(\mathbb{R}%
^{n},\{t_{k}\})$. Let $1<\theta \leq q<\infty $ and $\frac{1}{q}+\frac{1}{%
q^{\prime }}=1$. Let $\{t_{k}\}$ be a $q$-admissible weight sequence and let 
$\lambda =\{\lambda _{k,m}\}_{k\in \mathbb{Z},m\in \mathbb{Z}^{n}}\subset 
\mathbb{C}$. We define the conjugate norm to $\dot{f}_{\infty ,q}(\mathbb{R}%
^{n},\{t_{k}\})$ by 
\begin{equation*}
{\big\|}\lambda |\dot{f}_{\infty ,q}(\mathbb{R}^{n},\{t_{k}^{-1}\}){\big\|}%
^{\prime }=\sup_{\{s_{k,m}\}_{k\in \mathbb{Z},m\in \mathbb{Z}%
^{n}}}\sup_{P\in \mathcal{Q}}\Big|\int_{P}\frac{1}{|P|}\sum\limits_{k=-\log
_{2}l(P)}^{\infty }\sum\limits_{m\in \mathbb{Z}^{n}}\lambda
_{k,m}s_{k,m}\chi _{k,m}(x)dx\Big|,
\end{equation*}%
where the supremum is taken over all dyadic cube $P$\ and\ over all
sequence\ $s=\{s_{k,m}\}_{k\in \mathbb{Z},m\in \mathbb{Z}^{n}}\subset 
\mathbb{C}$ such that%
\begin{align*}
& {\big\|}s|\dot{f}_{\infty ,q^{\prime }}(\mathbb{R}^{n},\{2^{-nk}t_{k}^{-1}%
\}){\big\|}^{\star } \\
=& \sup_{P\in \mathcal{Q}}\Big(\frac{1}{|P|}\int_{P}\sum\limits_{k=-\log
_{2}l(P)}^{\infty }\sum\limits_{m\in \mathbb{Z}^{n}}2^{knq^{\prime }(\frac{1%
}{q^{\prime }}-\frac{1}{2})}(\tilde{t}_{k,m,q^{\prime }})^{q^{\prime
}}|s_{k,m}|^{q^{\prime }}\chi _{k,m}(x)dx\Big)^{\frac{1}{q^{\prime }}}\leq 1,
\end{align*}%
where $\tilde{t}_{k,m,q^{\prime }}=\big(\int_{Q_{k,m}}t_{k}^{-q^{\prime
}}(x)dx\big)^{\frac{1}{q^{\prime }}}$.

\begin{lemma}
\label{ball-daulity-norm}Let $0<\theta <\infty $ and $\max (\theta
,1)<q<\infty $. Let $\{t_{k}\}$ be a $q$-admissible weight sequence
satisfying $\mathrm{\eqref{Asum1}}$ with $\sigma _{1}=\theta \left( \frac{q}{%
\theta }\right) ^{\prime }$, $p=q$ and $j=k$\textit{. Let } $\lambda
=\{\lambda _{k,m}\}_{k\in \mathbb{Z},m\in \mathbb{Z}^{n}}\in \dot{f}_{\infty
,q}(\mathbb{R}^{n},\{t_{k}\})$. Then%
\begin{equation*}
{\big\|}\lambda |\dot{f}_{\infty ,q}(\mathbb{R}^{n},\{t_{k}\}){\big\|}%
\approx {\big\|}\lambda |\dot{f}_{\infty ,q}(\mathbb{R}^{n},\{t_{k}^{-1}\}){%
\big\|}^{\prime }.
\end{equation*}
\end{lemma}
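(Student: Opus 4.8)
The plan is to recast the equivalence as the elementary duality between weighted discrete $\ell^{q}$ and $\ell^{q'}$ sums, carried out scale by scale over dyadic cubes. Writing $k_{P}=-\log_{2}l(P)$ and integrating the characteristic functions via $\int_{Q_{k,m}}t_{k}^{q}=t_{k,m,q}^{q}$ and $\int_{Q_{k,m}}t_{k}^{-q'}=\tilde t_{k,m,q'}^{q'}$, both norms become Carleson-type suprema: setting $a_{k,m}:=2^{kn/2}t_{k,m,q}|\lambda_{k,m}|$ and $b_{k,m}:=2^{-kn/2}\tilde t_{k,m,q'}|s_{k,m}|$,
\[
\big\|\lambda|\dot{f}_{\infty,q}(\mathbb{R}^{n},\{t_{k}\})\big\|^{q}=\sup_{P}\frac{1}{|P|}\sum_{k\ge k_{P}}\sum_{Q_{k,m}\subseteq P}a_{k,m}^{q},\qquad \big(\|s\|^{\star}\big)^{q'}=\sup_{P}\frac{1}{|P|}\sum_{k\ge k_{P}}\sum_{Q_{k,m}\subseteq P}b_{k,m}^{q'},
\]
where $\|s\|^{\star}=\|s|\dot{f}_{\infty,q'}(\mathbb{R}^{n},\{2^{-nk}t_{k}^{-1}\})\|^{\star}$, while the pairing over $P$ equals $\frac{1}{|P|}\sum_{k\ge k_{P}}\sum_{Q_{k,m}\subseteq P}\lambda_{k,m}s_{k,m}2^{-kn}$. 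The single analytic input is the two-sided estimate $t_{k,m,q}\,\tilde t_{k,m,q'}\approx 2^{-kn}=|Q_{k,m}|$: the lower bound is Hölder applied to $\int_{Q_{k,m}}t_{k}t_{k}^{-1}$, and the upper bound $t_{k,m,q}\,\tilde t_{k,m,q'}\lesssim|Q_{k,m}|$ is the $A_{q}$-estimate $M_{Q,q}(t_{k})M_{Q,q'}(t_{k}^{-1})\lesssim1$ encoded in the hypothesis $\mathrm{\eqref{Asum1}}$ (with $p=q$, $j=k$, so that $t_{k}^{q}\in A_{q/\theta}$). Consequently $a_{k,m}b_{k,m}\approx 2^{-kn}|\lambda_{k,m}||s_{k,m}|$.

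For the inequality $\|\lambda|\dot{f}_{\infty,q}(\mathbb{R}^{n},\{t_{k}^{-1}\})\|^{\prime}\lesssim\|\lambda|\dot{f}_{\infty,q}(\mathbb{R}^{n},\{t_{k}\})\|$ I would fix any admissible $s$ (i.e.\ $\|s\|^{\star}\le1$) and any dyadic $P$, bound the pairing in absolute value by $\frac{1}{|P|}\sum|\lambda_{k,m}||s_{k,m}|2^{-kn}\le\frac{1}{|P|}\sum a_{k,m}b_{k,m}$ using only the Hölder lower bound $2^{-kn}\le t_{k,m,q}\tilde t_{k,m,q'}$, and then apply the discrete Hölder inequality in $(k,m)$ against the measure $\frac{1}{|P|}\sum$. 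This yields $\big(\frac{1}{|P|}\sum a_{k,m}^{q}\big)^{1/q}\big(\frac{1}{|P|}\sum b_{k,m}^{q'}\big)^{1/q'}\le\|\lambda|\dot{f}_{\infty,q}(\mathbb{R}^{n},\{t_{k}\})\|\cdot\|s\|^{\star}$; taking suprema over $P$ and $s$ gives this direction, using nothing beyond Hölder.

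The reverse inequality is the substantive half. Given $\varepsilon>0$ I would choose a dyadic cube $P_{0}$ nearly realizing the supremum defining $\|\lambda|\dot{f}_{\infty,q}(\mathbb{R}^{n},\{t_{k}\})\|$, and define $s$ supported on $S:=\{(k,m):k\ge k_{P_{0}},\,Q_{k,m}\subseteq P_{0}\}$ by the Hölder-extremal choice $b_{k,m}=a_{k,m}^{q-1}$, with phases arranged so that $\lambda_{k,m}s_{k,m}=|\lambda_{k,m}||s_{k,m}|\ge0$; then $b_{k,m}^{q'}=a_{k,m}^{q}$ on $S$. Because dyadic cubes are nested or disjoint, I can control $\|s\|^{\star}$ over \emph{every} cube $P'$: for $P'\subseteq P_{0}$ the contributing indices reduce to $\{k\ge k_{P'},\,Q_{k,m}\subseteq P'\}$, whence $\frac{1}{|P'|}\sum_{P'}b^{q'}=\frac{1}{|P'|}\sum_{P'}a^{q}\le\|\lambda|\dot{f}_{\infty,q}(\mathbb{R}^{n},\{t_{k}\})\|^{q}$ by the very definition of that norm; for $P'\supsetneq P_{0}$ one gains $|P_{0}|/|P'|\le1$, and for $P'$ disjoint from $P_{0}$ the sum vanishes. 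Hence $\|s\|^{\star}\le\|\lambda|\dot{f}_{\infty,q}(\mathbb{R}^{n},\{t_{k}\})\|^{q-1}$. Using now the \emph{upper} weight estimate in the form $(t_{k,m,q}\tilde t_{k,m,q'})^{-1}\gtrsim2^{kn}$, the pairing over $P_{0}$ obeys $\frac{1}{|P_{0}|}\sum_{S}\lambda_{k,m}s_{k,m}2^{-kn}\gtrsim\frac{1}{|P_{0}|}\sum_{S}a_{k,m}^{q}\gtrsim\big(\|\lambda|\dot{f}_{\infty,q}(\mathbb{R}^{n},\{t_{k}\})\|-\varepsilon\big)^{q}$; dividing $s$ by $\|s\|^{\star}$ and letting $\varepsilon\to0$ gives $\|\lambda|\dot{f}_{\infty,q}(\mathbb{R}^{n},\{t_{k}^{-1}\})\|^{\prime}\gtrsim\|\lambda|\dot{f}_{\infty,q}(\mathbb{R}^{n},\{t_{k}\})\|$.

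I expect the main obstacle to be precisely the global admissibility of the test sequence: one must verify $\|s\|^{\star}\le\|\lambda\|^{q-1}$ as a supremum over \emph{all} dyadic cubes, not merely over $P_{0}$. This is where the structure of the $\dot{f}_{\infty}$ norm as a supremum of normalized local sums is indispensable, since it supplies the uniform Carleson bound $\frac{1}{|P'|}\sum_{P'}a^{q}\le\|\lambda\|^{q}$ for every $P'$ simultaneously, and the identity $b^{q'}=a^{q}$ transfers it to $s$. The only other delicate point is the two-sided estimate $t_{k,m,q}\tilde t_{k,m,q'}\approx2^{-kn}$, whose upper half genuinely consumes the Muckenhoupt hypothesis $\mathrm{\eqref{Asum1}}$. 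One could alternatively route the reverse inequality through the $E_{Q_{k,m}}$-characterization of Proposition \ref{norm-equiv11}, but the direct construction above avoids any stopping-time argument.
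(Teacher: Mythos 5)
Your proposal is correct and follows essentially the same route as the paper's proof: the easy direction by H\"older's inequality, and the substantive direction by testing against the conjugate-exponent extremal sequence $|s_{k,m}|\approx 2^{knq/2}\,t_{k,m,q}^{q-1}\,(\tilde t_{k,m,q'})^{-1}|\lambda_{k,m}|^{q-1}$ combined with the $A_q$-type bound $t_{k,m,q}\,\tilde t_{k,m,q'}\lesssim |Q_{k,m}|$ (which the paper also extracts from the hypothesis, asserting $t_k^q\in A_q(\mathbb{R}^n)$). The only difference is cosmetic: you localize the test sequence to a near-extremal cube $P_0$ and normalize a posteriori using dyadic nesting, whereas the paper defines it globally and pre-normalized by ${\big\|}\lambda |\dot{f}_{\infty ,q}(\mathbb{R}^{n},\{t_{k}\}){\big\|}^{q-1}$ so that ${\big\|}s|\dot{f}_{\infty ,q^{\prime }}(\mathbb{R}^{n},\{2^{-nk}t_{k}^{-1}\}){\big\|}^{\star }=1$ exactly.
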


\begin{proof}
Let\ $s=\{s_{k,m}\}_{k\in \mathbb{Z},m\in \mathbb{Z}^{n}}\subset \mathbb{C}$
be such that ${\big\|}s|\dot{f}_{\infty ,q^{\prime }}(\mathbb{R}%
^{n},\{2^{-nk}t_{k}^{-1}\}){\big\|}^{\star }\leq 1$. Since $\frac{1}{q}+%
\frac{1}{q^{\prime }}=1$, by H\"{o}lder's inequality%
\begin{align*}
& \frac{1}{|P|}\Big|\int_{P}\sum\limits_{k=-\log _{2}l(P)}^{\infty
}\sum\limits_{m\in \mathbb{Z}^{n}}\lambda _{k,m}s_{k,m}\chi _{k,m}(x)dx\Big|
\\
\leq & {\big\|}\lambda |\dot{f}_{\infty ,q}(\mathbb{R}^{n},\{t_{k}\}){\big\|%
\big\|}s|f_{\infty ,q^{\prime }}(\mathbb{R}^{n},\{2^{-nk}t_{k}^{-1}\}){\big\|%
}
\end{align*}%
for any dyadic cube $P$. Using $\mathrm{\eqref{equi -f-inf-sequence}}$, we
derive that%
\begin{equation*}
{\big\|}s|f_{\infty ,q^{\prime }}(\mathbb{R}^{n},\{2^{-nk}t_{k}^{-1}\}){%
\big\|=\big\|}s|f_{\infty ,q}(\mathbb{R}^{n},\{2^{-nk}t_{k}\}){\big\|}%
^{\star }
\end{equation*}%
and 
\begin{equation*}
{\big\|}\lambda |\dot{f}_{\infty ,q^{\prime }}(\mathbb{R}^{n},\{t_{k}^{-1}\})%
{\big\|}^{\prime }\leq {\big\|}\lambda |\dot{f}_{\infty ,q^{\prime }}(%
\mathbb{R}^{n},\{t_{k}^{-1}\}){\big\|}.
\end{equation*}%
Let us prove that the converse holds. Let $s=\{s_{k,m}\}_{k\in \mathbb{Z}%
,m\in \mathbb{Z}^{n}}\subset \mathbb{C}$\ be a sequence defined by%
\begin{equation*}
s_{k,m}=(t_{k,m,q})^{q-1}2^{kn(\frac{1}{2}+\frac{q}{2q^{\prime }})}(\tilde{t}%
_{k,m,q^{\prime }})^{-1}\Big|\frac{\lambda _{k,m}}{{\big\|}\lambda |\dot{f}%
_{\infty ,q}(\mathbb{R}^{n},\{t_{k}\}){\big\|}}\Big|^{q-1}\text{sgn }\lambda
_{k,m}.
\end{equation*}%
We let the reader to check that 
\begin{equation*}
{\big\|}s|\dot{f}_{\infty ,q^{\prime }}(\mathbb{R}^{n},\{2^{-nk}t_{k}^{-1}\})%
{\big\|}^{\star }=1.
\end{equation*}%
Since $t_{k}^{q}\in A_{q}(\mathbb{R}^{n}),k\in \mathbb{Z}$, we have 
\begin{equation*}
|Q_{k,m}|^{-1}t_{k,m,q}\tilde{t}_{k,m,q^{\prime }}\leq c,\quad k\in \mathbb{Z%
},m\in \mathbb{Z}^{n},
\end{equation*}%
where the positive constant $c$ is independent of $k$ and $m$. Consequently,
we obtain 
\begin{align*}
{\big\|}\lambda |\dot{f}_{\infty ,q}(\mathbb{R}^{n},\{t_{k}^{-1}\}){\big\|}%
^{\prime }\geq & \sup_{P\in \mathcal{Q}}\Big|\int_{P}\frac{1}{|P|}%
\sum\limits_{k=-\log _{2}l(P)}^{\infty }\sum\limits_{m\in \mathbb{Z}%
^{n}}\lambda _{k,m}s_{k,m}\chi _{k,m}(x)dx\Big| \\
=\frac{1}{c}& {\big\|}\lambda |\dot{f}_{\infty ,q}(\mathbb{R}^{n},\{t_{k}\}){%
\big\|}
\end{align*}%
and hence we complete the proof of lemma.
\end{proof}

To prove the main result of this section, we need the following result.

\begin{theorem}
\label{main-result-duality1}Let $0<\theta <1<q<\infty $. Let $\{t_{k}\}$ be
a $1$-admissible weight sequence satisfying $\mathrm{\eqref{Asum1}}$ with $%
\sigma _{1}=\theta \left( \frac{1}{\theta }\right) ^{\prime }$, $p=1$ and $%
j=k$\textit{. Assume that }$\{t_{k}^{-1}\}$ is a $q^{\prime }$-admissible
weight sequence satisfying $\mathrm{\eqref{Asum1}}$ with $\sigma _{1}=\theta
\left( \frac{q^{\prime }}{\theta }\right) ^{\prime }$, $p=q^{\prime }$ and $%
j=k$. \textit{Then }%
\begin{equation*}
\big(\dot{f}_{1,q}(\mathbb{R}^{n},\{t_{k}\})\big)^{\ast }=\dot{f}_{\infty
,q^{\prime }}(\mathbb{R}^{n},\{t_{k}^{-1}\}).
\end{equation*}%
In particular, if $\lambda =\{\lambda _{k,m}\}_{k\in \mathbb{Z},m\in \mathbb{%
Z}^{n}}\in f_{\infty ,q^{\prime }}(\mathbb{R}^{n},\{t_{k}^{-1}\})$, then the
map 
\begin{equation*}
s=\{s_{k,m}\}_{k\in \mathbb{Z},m\in \mathbb{Z}^{n}}\rightarrow l_{\lambda
}(s)=\sum_{k=-\infty }^{\infty }\sum_{m\in \mathbb{Z}^{n}}s_{k,m}\bar{\lambda%
}_{k,m}
\end{equation*}%
defined a continuos linear functional on $\dot{f}_{1,q}(\mathbb{R}%
^{n},\{t_{k}\})$ with 
\begin{equation*}
{\big\|}l_{\lambda }|\big(\dot{f}_{1,q}(\mathbb{R}^{n},\{t_{k}\})\big)^{\ast
}{\big\|}\approx {\big\|}\lambda |\dot{f}_{\infty ,q^{\prime }}(\mathbb{R}%
^{n},\{t_{k}^{-1}\}){\big\|},
\end{equation*}%
and every $l\in \big(\dot{f}_{1,q}(\mathbb{R}^{n},\{t_{k}\})\big)^{\ast }$
is of this form for some $\lambda \in \dot{f}_{\infty ,q^{\prime }}(\mathbb{R%
}^{n},\{t_{k}^{-1}\})$.
\end{theorem}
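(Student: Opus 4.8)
The plan is to prove the two inclusions separately, after converting the pairing into an integral against a product of two ``densities'', and then to feed everything through the maximal-function characterisations of Propositions \ref{prop2} and \ref{norm-equiv11} together with the conjugate-norm machinery of Lemma \ref{ball-daulity-norm}. Write $k_{P}=-\log_{2}l(P)$, and for a sequence $s$ put $a_{k,m}(x)=2^{kn/2}t_{k}(x)|s_{k,m}|\chi_{k,m}(x)$ while for $\lambda$ put $b_{k,m}(x)=2^{kn/2}t_{k}^{-1}(x)|\lambda_{k,m}|\chi_{k,m}(x)$, so that $\big\|s|\dot{f}_{1,q}(\mathbb{R}^{n},\{t_{k}\})\big\|=\big\|(\sum_{k,m}a_{k,m}^{q})^{1/q}|L_{1}(\mathbb{R}^{n})\big\|$ and $\big\|\lambda|\dot{f}_{\infty,q^{\prime}}(\mathbb{R}^{n},\{t_{k}^{-1}\})\big\|$ is the supremum over dyadic $P$ of the $P$-averages of $\sum_{k\geq k_{P},m}b_{k,m}^{q^{\prime}}$. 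The bookkeeping identity $\int_{\mathbb{R}^{n}}a_{k,m}b_{k,m}\,dx=|s_{k,m}||\lambda_{k,m}|$ (the weights $t_{k}$, $t_{k}^{-1}$ and the normalisations cancel) turns $l_{\lambda}(s)=\sum_{k,m}s_{k,m}\bar{\lambda}_{k,m}$ into $\int_{\mathbb{R}^{n}}\sum_{k,m}a_{k,m}b_{k,m}\,dx$, which is the starting point for both directions.

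For the inclusion $\dot{f}_{\infty,q^{\prime}}(\mathbb{R}^{n},\{t_{k}^{-1}\})\hookrightarrow(\dot{f}_{1,q}(\mathbb{R}^{n},\{t_{k}\}))^{\ast}$ I would estimate $|l_{\lambda}(s)|\leq\int_{\mathbb{R}^{n}}\sum_{k,m}a_{k,m}b_{k,m}\,dx$. A naive pointwise H\"{o}lder inequality in the index $(k,m)$ produces only the \emph{untruncated} supremum norm of $\lambda$, which is too strong; the truncation $k\geq k_{P}$ built into the $\dot{f}_{\infty,q^{\prime}}$-norm must be respected. The remedy is a Calder\'{o}n--Zygmund stopping-time decomposition adapted to the density $A=(\sum_{k,m}a_{k,m}^{q})^{1/q}$ of $s$: one extracts the maximal dyadic cubes of the super-level sets of (a dyadic maximal function of) $A$, assigns each $Q_{k,m}$ to the minimal stopping cube $P$ containing it (which forces $k\geq k_{P}$), applies H\"{o}lder with exponents $(q,q^{\prime})$ on each $P$, and sums. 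On a stopping cube the truncation is precisely $k\geq k_{P}$, so the local $b$-factor is controlled by $\big\|\lambda|\dot{f}_{\infty,q^{\prime}}(\mathbb{R}^{n},\{t_{k}^{-1}\})\big\|$ and the restricted $A$-factor by the stopping level; the outer sum then closes by the Carleson packing $\sum 2^{\nu}|P_{\nu,j}|\lesssim\|A|L_{1}(\mathbb{R}^{n})\|$, where the $m^{q}$-characterisation of Proposition \ref{prop2} and the weighted maximal inequalities of Lemmas \ref{key-estimate1} and \ref{FS-inequality} (valid under the hypothesis on $\{t_{k}\}$) are used.

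For the reverse inclusion I would start from $l\in(\dot{f}_{1,q}(\mathbb{R}^{n},\{t_{k}\}))^{\ast}$ and set $\bar{\lambda}_{k,m}=l(e_{k,m})$, where $e_{k,m}$ is the sequence equal to $1$ at $(k,m)$ and $0$ elsewhere. Since $q<\infty$, finitely supported sequences are dense in $\dot{f}_{1,q}(\mathbb{R}^{n},\{t_{k}\})$, whence $l=l_{\lambda}$, and it remains to show $\lambda\in\dot{f}_{\infty,q^{\prime}}(\mathbb{R}^{n},\{t_{k}^{-1}\})$ with norm $\lesssim\|l\|$. By Propositions \ref{prop2} and \ref{norm-equiv11} it suffices to bound, for each dyadic $P$, the quantity $N_{P}=\frac{1}{|P|}\int_{P}\sum_{k\geq k_{P}}\sum_{m}2^{knq^{\prime}/2}t_{k}^{-q^{\prime}}|\lambda_{k,m}|^{q^{\prime}}\chi_{k,m}$. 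Fixing $P$, I would test $l$ against the explicit sequence of Lemma \ref{ball-daulity-norm} truncated to $\{(k,m):Q_{k,m}\subseteq P,\ k\geq k_{P}\}$: the Muckenhoupt product estimate $|Q_{k,m}|^{-1}t_{k,m,q}\tilde{t}_{k,m,q^{\prime}}\leq c$, furnished by the hypotheses via the Tyulenev-class Remark and Lemma \ref{Ap-Property}, bounds its $\dot{f}_{1,q}(\mathbb{R}^{n},\{t_{k}\})$-norm by $\lesssim|P|\,N_{P}^{1/q}$, while the pairing equals $\approx|P|\,N_{P}$. Inserting these into $|l_{\lambda}(s)|\leq\|l\|\,\|s\|$ and dividing gives $N_{P}^{1/q^{\prime}}\lesssim\|l\|$ uniformly in $P$, which is the assertion.

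The main obstacle is the first (``easy'') inclusion: making the stopping-time decomposition and the Carleson packing quantitative in the present weighted, variable-smoothness setting, that is, the weighted analogue of \cite[Lemma~5.4]{FJ90}. Matching the scale truncation $k\geq k_{P}$ to the stopping cubes, controlling the \emph{restricted} density $A$ on each stopping cube, and absorbing the weights $t_{k}$ of different $k$ into a single weighted maximal inequality are the delicate points; by contrast the reverse inclusion reduces to the explicit test-sequence computation already carried out in Lemma \ref{ball-daulity-norm}.
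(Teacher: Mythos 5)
Your converse inclusion is sound and is in substance the paper's own argument: the paper routes it through Lemma \ref{ball-daulity-norm} and the averaged test sequences $D_{k,h,P}=\int_{P}\frac{|\kappa_{k,h}|}{|P|}\chi_{k,h}(x)dx$, while you pair $l$ directly with the truncated extremal sequence; both versions rest on the same Muckenhoupt product bound $|Q_{k,m}|^{-1}t_{k,m,q}\tilde{t}_{k,m,q^{\prime}}\leq c$ and one H\"older step over $P$, and your computation closes (modulo the routine remark that one must first truncate to finitely many $(k,m)$ so that $N_{P}<\infty$ before dividing by $N_{P}^{1/q}$).

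The genuine gap is in the first inclusion, exactly where you located the ``main obstacle'', and it is not a mere matter of bookkeeping: the packing inequality you propose is false. If the stopping cubes $P_{\nu,j}$ are the maximal dyadic cubes of $\Omega_{\nu}=\{M_{d}A>2^{\nu}\}$, where $M_{d}$ is the dyadic maximal operator, then $\sum_{\nu,j}2^{\nu}|P_{\nu,j}|=\sum_{\nu}2^{\nu}|\Omega_{\nu}|\approx\big\|M_{d}A|L_{1}(\mathbb{R}^{n})\big\|$, and $M_{d}$ is not bounded on $L_{1}$: for $A=\chi_{[0,1)^{n}}$ this sum diverges while $\big\|A|L_{1}(\mathbb{R}^{n})\big\|=1$. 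A stopping-time proof can be repaired in the tent-space style of Coifman--Meyer--Stein (take level sets of $A$ itself, pass to the dilated sets $\{M_{d}\chi_{\Omega_{\nu}}>1/2\}$, whose measures are controlled by the weak $(1,1)$ inequality, and assign $Q_{k,m}$ to $\nu$ when at least half of $Q_{k,m}$ meets $\Omega_{\nu}$ but not $\Omega_{\nu+1}$), but then, because $a_{k,m}^{q}$ is not constant on $Q_{k,m}$ in this weighted setting, both the $A$-factor bound by the stopping level and the passage from $\int_{Q_{k,m}\setminus\Omega_{\nu+1}}t_{k}^{q}$ to $\int_{Q_{k,m}}t_{k}^{q}$ require a uniform $A_{\infty}$-type comparability for $t_{k}^{q}$ (Lemma \ref{Ap-Property}/(ii)--(iii)), an ingredient absent from your sketch and not immediate from the stated hypotheses. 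The paper avoids all of this with the Frazier--Jawerth substantial-set trick, which you had available but did not deploy: define, from $\lambda$ (not from $s$), the sets $E_{Q_{k,h}}=\{x\in Q_{k,h}:G_{Q_{k,h}}^{q^{\prime}}(\lambda,\{t_{k}^{-1}\})(x)\leq m^{q^{\prime}}(\lambda,\{t_{k}^{-1}\})(x)\}$, so that $|E_{Q_{k,h}}|\geq\frac{3}{4}|Q_{k,h}|$ by \eqref{inf-set}. Writing $|s_{k,h}||\lambda_{k,h}|\leq\frac{4}{3|Q_{k,h}|}\int_{E_{Q_{k,h}}}|s_{k,h}||\lambda_{k,h}|dx$ and applying H\"older once, the $\lambda$-factor obeys the pointwise bound $\sum_{k,h}2^{knq^{\prime}/2}t_{k}^{-q^{\prime}}(x)|\lambda_{k,h}|^{q^{\prime}}\chi_{E_{Q_{k,h}}}(x)\leq\big(m^{q^{\prime}}(\lambda,\{t_{k}^{-1}\})(x)\big)^{q^{\prime}}$, since for each $x$ the surviving terms are dominated by $\big(G_{Q}^{q^{\prime}}(\lambda,\{t_{k}^{-1}\})(x)\big)^{q^{\prime}}$ for the coarsest cube $Q\ni x$ with $x\in E_{Q}$, and this is at most $m^{q^{\prime}}$ by the very definition of $E_{Q}$; then Proposition \ref{prop2}, applied to $\{t_{k}^{-1}\}$ with exponent $q^{\prime}$ (which is precisely what the second hypothesis guarantees), bounds its $L_{\infty}$-norm by $\big\|\lambda|\dot{f}_{\infty,q^{\prime}}(\mathbb{R}^{n},\{t_{k}^{-1}\})\big\|$, while the $s$-factor is trivially at most $\big\|s|\dot{f}_{1,q}(\mathbb{R}^{n},\{t_{k}\})\big\|$ because $\chi_{E_{Q}}\leq\chi_{Q}$. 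In other words, the scale truncation $k\geq k_{P}$ that you tried to capture with stopping cubes is already encoded in the sets $E_{Q}$ built from $\lambda$; no decomposition of $s$, no Carleson packing, and no $L_{1}$ maximal inequality is needed.
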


\begin{proof}
We will use the idea from \cite[Theorem 5.9]{FJ90}. Let $Q_{k,h}$ be a
dyadic cube, $k\in \mathbb{Z},h\in \mathbb{Z}^{n}$. We set 
\begin{equation*}
E_{Q_{k,h}}=\Big\{x\in Q_{k,h}:G_{Q_{k,h}}^{q^{\prime }}(\lambda
,\{t_{k}^{-1}\})(x)\leq m^{q^{\prime }}(\lambda ,\{t_{k}^{-1}\})(x)\Big\}.
\end{equation*}%
Then $|E_{Q_{k,h}}|\geq \frac{3|Q_{k,h}|}{4}$ and%
\begin{equation*}
|s_{k,m}||\lambda _{k,h}|=\frac{1}{|E_{Q_{k,h}}|}%
\int_{E_{Q_{k,h}}}|s_{k,h}||\lambda _{k,h}|dx\leq \frac{4}{3|Q_{k,h}|}%
\int_{E_{Q_{k,h}}}|s_{k,h}||\lambda _{k,h}|dx.
\end{equation*}%
Using the H\"{o}lder inequality, we obtain 
\begin{align*}
|l_{\lambda }(s)|\leq & \frac{4}{3}\int_{\mathbb{R}^{n}}\sum_{k=-\infty
}^{\infty }\sum_{h\in \mathbb{Z}^{n}}2^{\frac{kn}{2}%
}t_{k}(x)|s_{k,h}|t_{k}^{-1}(x)2^{\frac{kn}{2}}|\lambda _{k,h}|\chi
_{E_{Q_{k,h}}}(x)dx \\
\leq & \frac{4}{3}\int_{\mathbb{R}^{n}}\Big(\sum_{k=-\infty }^{\infty
}\sum_{h\in \mathbb{Z}^{n}}2^{\frac{kn}{2}q}t_{k}^{q}(x)|s_{k,h}|^{q}\chi
_{E_{Q_{k,h}}}(x)\Big)^{1/q} \\
& \Big(\sum_{k=-\infty }^{\infty }\sum_{h\in \mathbb{Z}^{n}}2^{\frac{kn}{2}%
q^{\prime }}t_{k}^{-q^{\prime }}(x)|\lambda _{k,h}|^{q^{\prime }}\chi
_{E_{Q_{k,h}}}(x)\Big)^{1/q^{\prime }}dx.
\end{align*}%
The last term is bounded by 
\begin{align*}
& c{\big\|}s|\dot{f}_{1,q}(\mathbb{R}^{n},\{t_{k}\}){\big\|}\Big\|\Big(%
\sum_{k=-\infty }^{\infty }\sum_{h\in \mathbb{Z}^{n}}2^{\frac{kn}{2}%
q^{\prime }}t_{k}^{-q^{\prime }}|\lambda _{k,h}|^{q^{\prime }}\chi
_{E_{Q_{k,h}}}\Big)^{1/q^{\prime }}|L_{\infty }(\mathbb{R}^{n})\Big\| \\
\lesssim & {\big\|}s|\dot{f}_{1,q}(\mathbb{R}^{n},\{t_{k}\}){\big\|\big\|}%
m^{q^{\prime }}(\lambda ,\{t_{k}^{-1}\}){\big\|}_{\infty } \\
\lesssim & {\big\|}s|\dot{f}_{1,q}(\mathbb{R}^{n},\{t_{k}\}){\big\|\big\|}%
\lambda |\dot{f}_{\infty ,q^{\prime }}(\mathbb{R}^{n},\{t_{k}^{-1}\}){\big\|}%
,
\end{align*}%
by Proposition \ref{prop2}. Therefore, 
\begin{equation*}
{\big\|}l_{\lambda }|\big(\dot{f}_{1,q}(\mathbb{R}^{n},\{t_{k}\}\big)^{\ast }%
{\big\|}\lesssim {\big\|}\lambda |\dot{f}_{\infty ,q^{\prime }}(\mathbb{R}%
^{n},\{t_{k}^{-1}\}){\big\|}.
\end{equation*}%
Clearly every $l\in \big(\dot{f}_{1,q}(\mathbb{R}^{n},\{t_{k}\})\big)^{\ast }
$ is of the form $s\mapsto \sum_{k=-\infty }^{\infty }s_{k,h}\bar{\lambda}%
_{k,h}$ for some $\lambda =\{\lambda _{k,h}\}_{k\in \mathbb{Z},h\in \mathbb{Z%
}^{n}}$. Now, the norm 
\begin{equation*}
{\big\|}\lambda |\dot{f}_{\infty ,q^{\prime }}(\mathbb{R}^{n},\{t_{k}^{-1}\})%
{\big\|,}
\end{equation*}%
is equivalent to%
\begin{equation}
\sup \Big|\int_{P}\frac{1}{|P|}\sum\limits_{k=-\log _{2}l(P)}^{\infty
}\sum\limits_{h\in \mathbb{Z}^{n}}\lambda _{k,h}\kappa _{k,h}\chi _{k,h}(x)dx%
\Big|,  \label{main-est}
\end{equation}%
where the supremum is taking all dyadic cube $P$ and over all sequence of $%
\kappa =\{\kappa _{k,h}\}_{k\in \mathbb{Z},h\in \mathbb{Z}^{n}}$ such that 
\begin{equation*}
{\big\|}\kappa |\dot{f}_{\infty ,q}(\mathbb{R}^{n},\{2^{-nk}t_{k}\}{\big\|}%
^{\ast }\leq 1,
\end{equation*}%
see Lemma \ref{ball-daulity-norm}. Let $D_{P}=\{D_{k,h,P}\}_{k\mathbb{\in Z}%
,h\in \mathbb{Z}^{n}}$ where 
\begin{equation*}
D_{k,h,P}=\left\{ 
\begin{array}{ccc}
0 & \text{if} & k<k_{P}, \\ 
0 & \text{if} & k\geq k_{P}\text{ and }Q_{k,h}\cap P=\emptyset , \\ 
\int_{P}\frac{|\kappa _{k,h}|}{\left\vert P\right\vert }\chi _{k,h}(x)dx & 
\text{if} & k\geq k_{P}\text{ and }Q_{k,h}\subset P,%
\end{array}%
\right. 
\end{equation*}%
and $k_{P}:=-\log _{2}l(P)$. The integral in $\mathrm{\eqref{main-est}}$ is
just%
\begin{equation*}
\sum\limits_{k=k_{P}}^{\infty }\sum\limits_{h\in \mathbb{Z}^{n}}|\lambda
_{k,h}|\int_{P}\frac{1}{|P|}|\kappa _{k,h}|\chi
_{k,h}(x)dx=\sum\limits_{k=k_{P}}^{\infty }\sum\limits_{h\in \mathbb{Z}%
^{n}}|\lambda _{k,h}|D_{k,h,P},
\end{equation*}%
which can be estimated by%
\begin{equation*}
{\big\|}l|\big(\dot{f}_{1,q}(\mathbb{R}^{n},\{t_{k}\})\big)^{\ast }{\big\|%
\big\|}D_{P}|\dot{f}_{1,q}(\mathbb{R}^{n},\{t_{k}\}){\big\|},
\end{equation*}%
provided that 
\begin{equation}
{\big\|}D_{P}|\dot{f}_{1,q}(\mathbb{R}^{n},\{t_{k}\}){\big\|}\lesssim 1.
\label{claim}
\end{equation}%
The claim $\mathrm{\eqref{claim}}$ can be reformulated as showing that%
\begin{equation}
\int_{P}\Big(\sum\limits_{k=k_{P}}^{\infty }\sum\limits_{h\in \mathbb{Z}%
^{n}}2^{knq(\frac{1}{2}+1)}t_{k,h,1}^{q}D_{k,h,P}^{q}\chi _{k,h}(x)\Big)%
^{1/q}dx\lesssim 1.  \label{equa-1}
\end{equation}%
Obviously, by H\"{o}lder's inequality we get%
\begin{equation*}
t_{k,h,1}\leq |Q_{k,h}|^{\frac{1}{q^{\prime }}}t_{k,h,q},\quad k\mathbb{\in Z%
},h\in \mathbb{Z}^{n},
\end{equation*}%
which implies that%
\begin{equation*}
2^{kn(\frac{1}{2}+1)}t_{k,h,1}D_{k,h,P}\leq 2^{kn(\frac{1}{q}-\frac{1}{2})}%
\frac{t_{k,h,q}|\kappa _{k,h}|}{\left\vert P\right\vert },\quad
Q_{k,h}\subset P.
\end{equation*}%
Therefore, the left-hand side of $\mathrm{\eqref{equa-1}}$ is bounded by%
\begin{align*}
& c\int_{P}\Big(\sum\limits_{k=k_{P}}^{\infty }\sum\limits_{h\in \mathbb{Z}%
^{n}}2^{kn(\frac{1}{q}-\frac{1}{2})q}\frac{t_{k,h,q}^{q}|\kappa _{k,h}|^{q}}{%
\left\vert P\right\vert ^{q}}\chi _{k,h}\Big)^{1/q}dy \\
\lesssim & \frac{1}{\left\vert P\right\vert ^{\frac{1}{q}}}\Big\|\Big(%
\sum\limits_{k=k_{P}}^{\infty }\sum\limits_{h\in \mathbb{Z}^{n}}2^{kn(\frac{1%
}{q}-\frac{1}{2})q}t_{k,h,q}^{q}|\kappa _{k,h}|^{q}\chi _{k,h}\Big)%
^{1/q}\chi _{P}|L_{q}(\mathbb{R}^{n})\Big\| \\
\lesssim & {\big\|}s|\dot{f}_{\infty ,q}(\mathbb{R}^{n},\{2^{-nk}t_{k}\}{%
\big\|}^{\ast } \\
\lesssim & 1,
\end{align*}%
where the first estimate follows by H\"{o}lder's inequality. Consequently, 
\begin{equation*}
{\big\|}\lambda |\dot{f}_{\infty ,q^{\prime }}(\mathbb{R}^{n},\{2^{-nk}t_{k}%
\}{\big\|}\lesssim {\big\|}l|\dot{f}_{1,q}(\mathbb{R}^{n},\{t_{k}\}){\big\|}
\end{equation*}%
and hence completes the proof of this theorem.
\end{proof}

Using the notation introduced above, we may now state the main result of
this section.

\begin{theorem}
\label{main-result-duality2}Let $0<\theta <1<q<\infty $. Let $\{t_{k}\}\in 
\dot{X}_{\alpha ,\sigma ,1}$ be a $1$-admissible weight sequence with $%
\sigma =(\sigma _{1}=\theta \left( \frac{1}{\theta }\right) ^{\prime
},\sigma _{2}\geq 1)$\textit{. Assume that }$\{t_{k}^{-1}\}\in \dot{X}%
_{\alpha ,\sigma ,q^{\prime }}$ is a $q^{\prime }$-admissible weight
sequence with $\sigma =(\sigma _{1}=\theta \left( \frac{q^{\prime }}{\theta }%
\right) ^{\prime },\sigma _{2}\geq q^{\prime })$. \textit{Then }%
\begin{equation*}
\big(\dot{F}_{1,q}(\mathbb{R}^{n},\{t_{k}\})\big)^{\ast }=\dot{F}_{\infty
,q^{\prime }}(\mathbb{R}^{n},\{t_{k}^{-1}\}).
\end{equation*}%
In particular, if $g\in \dot{F}_{\infty ,q^{\prime }}(\mathbb{R}%
^{n},\{t_{k}^{-1}\})$, then the map, given by $l_{g}(f)=\langle f,g\rangle ,$
defined initially for $f\in \mathcal{S}_{\infty }(\mathbb{R}^{n})$ extends
to a continuous linear functional on $\dot{F}_{1,q}(\mathbb{R}%
^{n},\{t_{k}\}) $ with 
\begin{equation*}
{\big\|}g|\dot{F}_{\infty ,q^{\prime }}(\mathbb{R}^{n},\{t_{k}^{-1}\}){\big\|%
}\approx {\big\|}l_{g}|(\dot{F}_{1,q}(\mathbb{R}^{n},\{t_{k}\}))^{\ast }{%
\big\|}
\end{equation*}%
and every $l\in \big(\dot{F}_{1,q}(\mathbb{R}^{n},\{t_{k}\})\big)^{\ast }$
satisfies $l=l_{g}$ for some $g\in \dot{F}_{\infty ,q^{\prime }}(\mathbb{R}%
^{n},\{t_{k}^{-1}\})$.
\end{theorem}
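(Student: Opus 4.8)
The plan is to transfer the identity to the level of the sequence spaces, where the corresponding statement has already been proved in Theorem~\ref{main-result-duality1}, and then to lift it back by means of the $\varphi$-transform machinery of Theorem~\ref{phi-tran1}. Throughout, for $f\in\mathcal S_\infty(\mathbb R^n)$ and $g\in\dot F_{\infty,q'}(\mathbb R^n,\{t_k^{-1}\})$ the pairing is read off from the Calderón reproducing formula \eqref{proc2}, namely
\[
\langle f,g\rangle=\sum_{k=-\infty}^\infty\sum_{m\in\mathbb Z^n}(S_\varphi f)_{k,m}\,\overline{(S_\psi g)_{k,m}},
\]
which is the natural bilinear bridge between the analysis operator $S_\varphi$ and the synthesis operator $T_\psi$. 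Since $\mathcal S_\infty(\mathbb R^n)$ is dense in $\dot F_{1,q}(\mathbb R^n,\{t_k\})$ by Theorem~\ref{embeddings-S-inf}, once the continuity of $l_g$ is established on this subspace it extends uniquely to all of $\dot F_{1,q}(\mathbb R^n,\{t_k\})$.

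For the first inclusion, fix $g\in\dot F_{\infty,q'}(\mathbb R^n,\{t_k^{-1}\})$. By Theorem~\ref{phi-tran1}, applied with $\tilde p=1$ to $\{t_k\}\in\dot X_{\alpha,\sigma,1}$ and, since $(\psi,\varphi)$ again satisfies \eqref{Ass1}--\eqref{Ass3}, with $\tilde p=q'$ to $\{t_k^{-1}\}\in\dot X_{\alpha,\sigma,q'}$, the sequences $S_\varphi f$ and $S_\psi g$ lie in $\dot f_{1,q}(\mathbb R^n,\{t_k\})$ and $\dot f_{\infty,q'}(\mathbb R^n,\{t_k^{-1}\})$ with norms controlled by $\|f\,|\,\dot F_{1,q}(\mathbb R^n,\{t_k\})\|$ and $\|g\,|\,\dot F_{\infty,q'}(\mathbb R^n,\{t_k^{-1}\})\|$ respectively. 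Theorem~\ref{main-result-duality1} then yields
\[
|\langle f,g\rangle|\lesssim\big\|S_\varphi f\,|\,\dot f_{1,q}(\mathbb R^n,\{t_k\})\big\|\,\big\|S_\psi g\,|\,\dot f_{\infty,q'}(\mathbb R^n,\{t_k^{-1}\})\big\|\lesssim\big\|f\,|\,\dot F_{1,q}(\mathbb R^n,\{t_k\})\big\|\,\big\|g\,|\,\dot F_{\infty,q'}(\mathbb R^n,\{t_k^{-1}\})\big\|,
\]
so that $l_g$ is bounded with $\|l_g\|\lesssim\|g\,|\,\dot F_{\infty,q'}(\mathbb R^n,\{t_k^{-1}\})\|$.

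For the converse, let $l\in\big(\dot F_{1,q}(\mathbb R^n,\{t_k\})\big)^\ast$. The composition $l\circ T_\varphi$ is a bounded functional on $\dot f_{1,q}(\mathbb R^n,\{t_k\})$, so Theorem~\ref{main-result-duality1} furnishes $\mu\in\dot f_{\infty,q'}(\mathbb R^n,\{t_k^{-1}\})$ with $l(T_\varphi\lambda)=\sum_{k,m}\lambda_{k,m}\bar\mu_{k,m}$ and $\|\mu\,|\,\dot f_{\infty,q'}\|\approx\|l\circ T_\varphi\|\lesssim\|l\|$. Setting $g:=T_\psi\mu$, Theorem~\ref{phi-tran1} in the $\tilde p=q'$ case, together with Lemma~\ref{convergence}, places $g$ in $\dot F_{\infty,q'}(\mathbb R^n,\{t_k^{-1}\})$ with $\|g\,|\,\dot F_{\infty,q'}\|\lesssim\|l\|$. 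Invoking the reproducing identity $T_\varphi\circ S_\psi=\mathrm{id}$ on the dense subspace and the definition of the pairing, for every $f\in\mathcal S_\infty(\mathbb R^n)$ one finds
\[
l(f)=l\big(T_\varphi S_\psi f\big)=\sum_{k,m}(S_\psi f)_{k,m}\bar\mu_{k,m}=\langle f,T_\psi\mu\rangle=\langle f,g\rangle,
\]
whence $l=l_g$; combined with the first part this gives the asserted norm equivalence and the identification.

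The hard part will be the rigorous justification of the pairing identity stated at the outset: one must verify that the double series converges absolutely, that its value is independent both of the admissible pair $\varphi,\psi$ and of the representative chosen for the class $g$ (recall that the elements of $\dot F_{\infty,q'}$ are equivalence classes of distributions in $\mathcal S_\infty^\prime$), and that the interchange $\langle T_\psi\lambda,g\rangle=\sum_{k,m}\lambda_{k,m}\langle\psi_{k,m},g\rangle$ is licit. Absolute convergence is precisely what Lemma~\ref{convergence} secures, while the independence of the representative and the careful tracking of which operator ($\varphi$ for analysis, $\psi$ for synthesis) occurs on each side — an asymmetry forced by the placement of the complex conjugate in \eqref{Ass3} — are the delicate bookkeeping points that make the two halves of the argument dovetail.
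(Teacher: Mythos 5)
Your proposal is correct and follows essentially the same route as the paper: both reduce the statement to the sequence-space duality of Theorem~\ref{main-result-duality1} via the $\varphi$-transform bounds of Theorem~\ref{phi-tran1}, use density of $\mathcal{S}_{\infty}(\mathbb{R}^{n})$ and Lemma~\ref{convergence}, and in the converse direction compose $l$ with $T_{\varphi}$, extract $\mu\in\dot{f}_{\infty,q^{\prime}}(\mathbb{R}^{n},\{t_{k}^{-1}\})$, and set $g=T_{\psi}\mu$. The only cosmetic difference is that the paper simplifies the bookkeeping by choosing $\psi=\varphi$, so the asymmetry between analysis and synthesis operators that you track explicitly (legitimately, since $(\psi,\varphi)$ again satisfies \eqref{Ass1}--\eqref{Ass3}) disappears there.
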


\begin{proof}
We follow the arguments of \cite[Theorem 5.13]{FJ90}. We may choose $\psi
=\varphi $ satisfies \eqref{Ass1}-\eqref{Ass3}. By Lemma \ref{DW-lemma1} and
Theorem \ref{main-result-duality1} we have that for any $f\in \mathcal{S}%
_{\infty }(\mathbb{R}^{n})$ and $g\in \dot{F}_{\infty ,q^{\prime }}(\mathbb{R%
}^{n},\{t_{k}^{-1}\}),$%
\begin{equation*}
|l_{g}(f)|=|\langle f,g\rangle |=|\langle S_{\varphi }f,S_{\varphi }g\rangle
|\leq {\big\|}S_{\varphi }g|\dot{f}_{\infty ,q^{\prime }}(\mathbb{R}%
^{n},\{t_{k}^{-1}\}){\big\|\big\|}S_{\varphi }f|\dot{f}_{1,q}(\mathbb{R}%
^{n},\{t_{k}\}){\big\|}.
\end{equation*}%
By Theorem \ref{phi-tran1} we have that%
\begin{equation*}
|l_{g}(f)|\lesssim {\big\|}g|\dot{F}_{\infty ,q^{\prime }}(\mathbb{R}%
^{n},\{t_{k}^{-1}\}){\big\|\big\|}f|\dot{F}_{1,q}(\mathbb{R}^{n},\{t_{k}\}){%
\big\|}.
\end{equation*}%
Conversely, suppose that $l\in \big(\dot{F}_{1,q}(\mathbb{R}^{n},\{t_{k}\})%
\big)^{\ast }$. Then%
\begin{equation*}
l_{1}=l\circ T_{\varphi }\in \big(\dot{f}_{1,q}(\mathbb{R}^{n},\{t_{k}\})%
\big)^{\ast }.
\end{equation*}%
Thanks to Theorem \ref{main-result-duality1} there exists $\lambda
=\{\lambda _{k,m}\}_{k\in \mathbb{Z},m\in \mathbb{Z}^{n}}\in \dot{f}_{\infty
,q^{\prime }}(\mathbb{R}^{n},\{t_{k}^{-1}\})$ such that%
\begin{equation*}
l_{1}(s)=\langle s,\lambda \rangle ,\quad s\in \dot{f}_{1,q}(\mathbb{R}%
^{n},\{t_{k}\})
\end{equation*}%
and%
\begin{equation*}
{\big\|}l_{1}|\big(\dot{f}_{1,q}(\mathbb{R}^{n},\{t_{k}\})\big)^{\ast }{%
\big\|}\approx {\big\|}\lambda |\dot{f}_{\infty ,q^{\prime }}(\mathbb{R}%
^{n},\{t_{k}^{-1}\}){\big\|}.
\end{equation*}%
By Theorem \ref{phi-tran1} we have that%
\begin{align*}
{\big\|}T_{\psi }\lambda |\dot{F}_{\infty ,q^{\prime }}(\mathbb{R}%
^{n},\{t_{k}^{-1}\}){\big\|}{\lesssim \big\|}& \lambda |\dot{f}_{\infty
,q^{\prime }}(\mathbb{R}^{n},\{t_{k}^{-1}\}){\big\|} \\
\lesssim & {\big\|}l_{1}|\big(\dot{f}_{1,q}(\mathbb{R}^{n},\{t_{k}\})\big)%
^{\ast }{\big\|.}
\end{align*}%
Finally, for any $f\in \mathcal{S}_{\infty }(\mathbb{R}^{n})$%
\begin{equation*}
l(f)=l(T_{\varphi }S_{\varphi }(f))=l\circ T_{\varphi }(S_{\varphi
}(f))=l_{1}(S_{\varphi }(f))=\langle S_{\varphi }(f),\lambda \rangle
=\langle f,T_{\psi }\lambda \rangle .
\end{equation*}%
This completes the proof.
\end{proof}

The goal of the rest of this section is to identify the duals of $\dot{F}%
_{p,q}(\mathbb{R}^{n},\{t_{k}\})$ spaces for $1<p<\infty $ and $1<q<\infty $%
. Again, This case was established by working on the sequence space $\dot{f}%
_{p,q}(\mathbb{R}^{n},\{t_{k}\})$.

\begin{theorem}
\label{main-result-duality3}Let $1<\theta \leq p<\infty $ and $1<q<\infty $.
Let\ $\{t_{k}\}$ be a $p$-admissible weight sequence satisfying $\mathrm{%
\eqref{Asum1}}$ with $\sigma _{1}=\theta \left( \frac{p}{\theta }\right)
^{\prime }$ and $j=k$\textit{. Then }%
\begin{equation*}
\big(\dot{f}_{p,q}(\mathbb{R}^{n},\{t_{k}\})\big)^{\ast }=\dot{f}_{p^{\prime
},q^{\prime }}(\mathbb{R}^{n},\{t_{k}^{-1}\}).
\end{equation*}%
In particular, if $\lambda =\{\lambda _{k,m}\}_{k\in \mathbb{Z},m\in \mathbb{%
Z}^{n}}\in \dot{f}_{p^{\prime },q^{\prime }}(\mathbb{R}^{n},\{t_{k}^{-1}\})$%
, then the map 
\begin{equation*}
s=\{s_{k,m}\}_{k\in \mathbb{Z},m\in \mathbb{Z}^{n}}\rightarrow l_{\lambda
}(s)=\sum_{k=-\infty }^{\infty }\sum_{m\in \mathbb{Z}^{n}}s_{k,m}\bar{\lambda%
}_{k,m}
\end{equation*}%
defined a continuos linear functional on $\dot{f}_{p,q}(\mathbb{R}%
^{n},\{t_{k}\})$ with 
\begin{equation*}
{\big\|}l_{\lambda }|\big(\dot{f}_{p,q}(\mathbb{R}^{n},\{t_{k}\})\big)^{\ast
}{\big\|}\approx {\big\|}\lambda |\dot{f}_{p^{\prime },q^{\prime }}(\mathbb{R%
}^{n},\{t_{k}^{-1}\}){\big\|},
\end{equation*}%
and every $l\in \big(\dot{f}_{p,q}(\mathbb{R}^{n},\{t_{k}\})\big)^{\ast }$
is of this form for some $\lambda \in \dot{f}_{p^{\prime },q^{\prime }}(%
\mathbb{R}^{n},\{t_{k}^{-1}\})$.
\end{theorem}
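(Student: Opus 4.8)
The plan is to transfer the duality to the vector-valued Lebesgue space $L_{p}(\ell _{q})$ and then to invoke the weighted Fefferman--Stein inequality, Lemma~\ref{FS-inequality}, this time applied to the reciprocal weight sequence $\{t_{k}^{-1}\}$. I would first settle the inclusion $\dot{f}_{p^{\prime },q^{\prime }}(\mathbb{R}^{n},\{t_{k}^{-1}\})\hookrightarrow (\dot{f}_{p,q}(\mathbb{R}^{n},\{t_{k}\}))^{\ast }$ by a double application of H\"older's inequality. Since $|Q_{k,m}|=2^{-kn}$ and the cubes $\{Q_{k,m}\}_{m}$ partition $\mathbb{R}^{n}$ at each fixed scale $k$, one writes
\[
l_{\lambda }(s)=\sum_{k,m}s_{k,m}\bar{\lambda}_{k,m}=\int_{\mathbb{R}^{n}}\sum_{k=-\infty }^{\infty }\sum_{m\in \mathbb{Z}^{n}}2^{kn}s_{k,m}\bar{\lambda}_{k,m}\chi _{k,m}(x)\,dx.
\]
Splitting $2^{kn}=2^{kn/2}2^{kn/2}$ and inserting $1=t_{k}(x)t_{k}^{-1}(x)$, H\"older in the index $(k,m)$ with exponents $q,q^{\prime }$ followed by H\"older in $x$ with exponents $p,p^{\prime }$ yields $|l_{\lambda }(s)|\le \big\| s|\dot{f}_{p,q}(\mathbb{R}^{n},\{t_{k}\})\big\| \,\big\| \lambda |\dot{f}_{p^{\prime },q^{\prime }}(\mathbb{R}^{n},\{t_{k}^{-1}\})\big\|$, giving continuity of $l_{\lambda }$ and the bound $\big\| l_{\lambda }|(\dot{f}_{p,q})^{\ast }\big\| \lesssim \big\| \lambda |\dot{f}_{p^{\prime },q^{\prime }}(\mathbb{R}^{n},\{t_{k}^{-1}\})\big\|$.

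For the converse I would embed the sequence space isometrically into $L_{p}(\ell _{q})$. Setting $F_{k}^{\lambda }(x):=\sum_{m}2^{kn/2}t_{k}(x)\lambda _{k,m}\chi _{k,m}(x)$, the partition property gives $\big\| \{F_{k}^{\lambda }\}|L_{p}(\ell _{q})\big\| =\big\| \lambda |\dot{f}_{p,q}(\mathbb{R}^{n},\{t_{k}\})\big\|$, so $J:\lambda \mapsto \{F_{k}^{\lambda }\}$ is an isometry onto a closed subspace $V\subset L_{p}(\ell _{q})$. Given $l\in (\dot{f}_{p,q})^{\ast }$, Hahn--Banach extends $l\circ J^{-1}$ from $V$ to a functional on $L_{p}(\ell _{q})$ of the same norm; since $1<p,q<\infty $, the standard duality $(L_{p}(\ell _{q}))^{\ast }=L_{p^{\prime }}(\ell _{q^{\prime }})$ furnishes $\{G_{k}\}\in L_{p^{\prime }}(\ell _{q^{\prime }})$ with $\big\| \{G_{k}\}|L_{p^{\prime }}(\ell _{q^{\prime }})\big\| =\big\| l\big\|$ representing it. Evaluating on $F^{\lambda }$ and using that $\lambda _{k,m}$ is constant on $Q_{k,m}$ identifies $l=l_{\mu }$ with $\mu _{k,m}:=2^{kn/2}\int_{Q_{k,m}}t_{k}G_{k}\,dx$.

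It then remains to show $\mu \in \dot{f}_{p^{\prime },q^{\prime }}(\mathbb{R}^{n},\{t_{k}^{-1}\})$ with norm controlled by $\| l\|$. From $|\mu _{k,m}|\le 2^{-kn/2}M_{Q_{k,m}}(t_{k}|G_{k}|)$ one gets, for $x\in Q_{k,m}$, the pointwise bound $2^{kn/2}t_{k}^{-1}(x)|\mu _{k,m}|\le t_{k}^{-1}(x)\,\mathcal{M}(t_{k}|G_{k}|)(x)$, whence
\[
\Big(\sum_{k,m}2^{knq^{\prime }/2}t_{k}^{-q^{\prime }}|\mu _{k,m}|^{q^{\prime }}\chi _{k,m}\Big)^{1/q^{\prime }}\le \Big(\sum_{k}t_{k}^{-q^{\prime }}\big(\mathcal{M}(t_{k}|G_{k}|)\big)^{q^{\prime }}\Big)^{1/q^{\prime }}.
\]
Taking the $L_{p^{\prime }}$ norm and applying Lemma~\ref{FS-inequality} to the weight sequence $\{t_{k}^{-1}\}$ with exponents $p^{\prime },q^{\prime }$ and inner functions $t_{k}|G_{k}|$ bounds the right-hand side by $\big\| \{G_{k}\}|L_{p^{\prime }}(\ell _{q^{\prime }})\big\| =\| l\|$. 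Combined with the easy direction applied to $\mu $, this gives the surjectivity together with $\big\| l_{\mu }|(\dot{f}_{p,q})^{\ast }\big\| \approx \big\| \mu |\dot{f}_{p^{\prime },q^{\prime }}(\mathbb{R}^{n},\{t_{k}^{-1}\})\big\|$.

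The main obstacle is verifying that Lemma~\ref{FS-inequality} is actually applicable to $\{t_{k}^{-1}\}$, i.e.\ that $t_{k}^{-p^{\prime }}$ lies in a Muckenhoupt class $A_{p^{\prime }/\theta ^{\ast }}(\mathbb{R}^{n})$ for some $1<\theta ^{\ast }\le p^{\prime }$, with constants uniform in $k$. The chain I would run is: by part~(i) of the Remark following Definition~\ref{Tyulenev-class}, hypothesis~\eqref{Asum1} with $\sigma _{1}=\theta (p/\theta )^{\prime }$ yields $t_{k}^{p}\in A_{p/\theta }(\mathbb{R}^{n})$ with uniformly bounded constants; since $\theta >1$, nesting gives $t_{k}^{p}\in A_{p}(\mathbb{R}^{n})$, and Lemma~\ref{Ap-Property}/(i) (via the identity $p(1-p^{\prime })=-p^{\prime }$) converts this to $t_{k}^{-p^{\prime }}\in A_{p^{\prime }}(\mathbb{R}^{n})$, again uniformly; finally Lemma~\ref{Ap-Property}/(iv) self-improves $A_{p^{\prime }}$ to some $A_{p_{1}}$ with $1<p_{1}<p^{\prime }$, so $\theta ^{\ast }:=p^{\prime }/p_{1}>1$ does the job. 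The accompanying remark to Lemma~\ref{FS-inequality} is what permits merely uniformly bounded (rather than equal) Muckenhoupt constants, and tracking this uniformity through the duality and self-improvement steps is the delicate point; everything else is the routine H\"older and maximal-function bookkeeping above.
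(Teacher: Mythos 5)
Your proof follows essentially the same route as the paper's: the easy direction by the double H\"older argument, and the converse via the isometric embedding into $L_{p}(\ell _{q})$, Hahn--Banach, the duality $(L_{p}(\ell _{q}))^{\ast }=L_{p^{\prime }}(\ell _{q^{\prime }})$, the maximal-function bound $|\lambda _{j,m}|\leq 2^{-jn/2}\mathcal{M}(t_{j}g_{j})$ on the induced coefficients, and Lemma \ref{FS-inequality} applied to $\{t_{k}^{-1}\}$ with exponents $p^{\prime },q^{\prime }$. Your explicit chain $t_{k}^{p}\in A_{p/\theta }\subset A_{p}$, then $t_{k}^{-p^{\prime }}\in A_{p^{\prime }}$ uniformly via Lemma \ref{Ap-Property}/(i), then self-improvement via Lemma \ref{Ap-Property}/(iv), is precisely what the paper handles (slightly more tersely) through Lemma \ref{Ap-Property} and the remark following Lemma \ref{FS-inequality}, so the two arguments coincide.
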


\begin{proof}
We follow the arguments of \cite[Remark 5.14]{FJ90} and \cite[Theorem 4.2]%
{M08}. Let $s\in \dot{f}_{p,q}(\mathbb{R}^{n},\{t_{k}\})$. We have%
\begin{equation*}
\sum_{k=-\infty }^{\infty }\sum_{m\in \mathbb{Z}^{n}}|s_{k,m}||\bar{\lambda}%
_{k,m}|=\sum_{k=-\infty }^{\infty }\sum_{m\in \mathbb{Z}^{n}}%
\int_{Q_{k,m}}2^{\frac{kn}{2}}t_{k}(x)|s_{k,m}|t_{k}^{-1}(x)2^{\frac{kn}{2}%
}|\lambda _{k,m}|dx.
\end{equation*}%
H\"{o}lder's inequality yields that $|l_{\lambda }(s)|$ can be estimated by%
\begin{align*}
& \int_{\mathbb{R}^{n}}\Big(\sum_{k=-\infty }^{\infty }\sum_{m\in \mathbb{Z}%
^{n}}2^{\frac{kn}{2}q}t_{k}^{q}(x)|s_{k,m}|^{q}\chi _{k,m}(x)\Big)^{1/q}\Big(%
\sum_{k=-\infty }^{\infty }\sum_{m\in \mathbb{Z}^{n}}2^{\frac{kn}{2}%
q^{\prime }}t_{k}^{-q^{\prime }}(x)|\lambda _{k,m}|^{q^{\prime }}\chi
_{k,m}(x)\Big)^{1/q^{\prime }}dx \\
& \leq {\big\|}s|\dot{f}_{p,q}(\mathbb{R}^{n},\{t_{k}\}){\big\|\big\|}s|\dot{%
f}_{p^{\prime },q^{\prime }}(\mathbb{R}^{n},\{t_{k}^{-1}\}){\big\|},
\end{align*}%
which yields 
\begin{equation*}
{\big\|}l_{\lambda }|\big(\dot{f}_{p,q}(\mathbb{R}^{n},\{t_{k}\})\big)^{\ast
}{\big\|}\leq {\big\|}\lambda |\dot{f}_{p^{\prime },q^{\prime }}(\mathbb{R}%
^{n},\{t_{k}^{-1}\}){\big\|}.
\end{equation*}%
Let $I:\dot{f}_{p,q}(\mathbb{R}^{n},\{t_{k}\})\rightarrow L_{p}(\ell _{q})$
be given by%
\begin{equation*}
I(s)=\Big\{\sum_{m\in \mathbb{Z}^{n}}2^{\frac{kn}{2}}t_{k}s_{k,m}\chi _{k,m}%
\Big\}_{k\in \mathbb{Z}}.
\end{equation*}%
Then $I$ is an isometry. Let $l\in \big(\dot{f}_{p,q}(\mathbb{R}%
^{n},\{t_{k}\})\big)^{\ast }$. By the Hahn-Banach Theorem, there exists $%
\tilde{l}\in \big(L_{p}(\ell _{q})\big)^{\ast }$ such $\tilde{l}\circ I=l$
and 
\begin{equation*}
{\big\|}\tilde{l}|\big(L_{p}(\ell _{q})\big)^{\ast }{\big\|=\big\|}l|\big(%
\dot{f}_{p,q}(\mathbb{R}^{n},\{t_{k}\})\big)^{\ast }{\big\|.}
\end{equation*}%
By Proposition 2.11.1 in \cite{T1}, 
\begin{equation*}
\tilde{l}(f)=\langle f,g\rangle =\int_{\mathbb{R}^{n}}\sum_{j=-\infty
}^{\infty }f_{j}(x)g_{j}(x)dx
\end{equation*}%
for some $g=\{g_{j}\}_{j\in \mathbb{Z}}\in L_{p^{\prime }}(\ell _{q^{\prime
}})$. Let $s\in \dot{f}_{p,q}(\mathbb{R}^{n},\{t_{k}\})$. Then%
\begin{align*}
l(s)=& \tilde{l}\circ I(s)=\tilde{l}(I(s)) \\
=& \langle I(s),g\rangle \\
=& \int_{\mathbb{R}^{n}}\sum_{j=-\infty }^{\infty }\sum_{m\in \mathbb{Z}%
^{n}}2^{\frac{jn}{2}}t_{j}(x)s_{j,m}\chi _{j,m}(x)g_{j}(x)dx \\
=& \sum_{j=-\infty }^{\infty }\sum_{m\in \mathbb{Z}^{n}}s_{j,m}\lambda _{j,m}
\end{align*}%
where%
\begin{equation*}
\lambda _{j,m}=2^{\frac{jn}{2}}\int_{Q_{j,m}}t_{j}(x)g_{j}(x)dx,\quad j\in 
\mathbb{Z},m\in \mathbb{Z}^{n}.
\end{equation*}%
We have%
\begin{equation*}
|\lambda _{j,m}|\leq 2^{-\frac{jn}{2}}\mathcal{M}(t_{j}g_{j}),\quad j\in 
\mathbb{Z},m\in \mathbb{Z}^{n}.
\end{equation*}%
Since $t_{k}^{p}\in A_{\frac{p}{\theta }}(\mathbb{R}^{n})\subset A_{p}(%
\mathbb{R}^{n}),k\in \mathbb{Z},$ it follows by Lemma \ref{Ap-Property}/(ii)
that $t_{k}^{-p^{\prime }}\in A_{p^{\prime }}(\mathbb{R}^{n}),k\in \mathbb{Z}
$. By Lemma \ref{FS-inequality}, we derive%
\begin{equation*}
{\big\|}\lambda |\dot{f}_{p^{\prime },q^{\prime }}(\mathbb{R}%
^{n},\{t_{k}^{-1}\}){\big\|\lesssim \big\|}g|L_{p^{\prime }}(\ell
_{q^{\prime }}){\big\|\lesssim \big\|}l|\big(\dot{f}_{p,q}(\mathbb{R}%
^{n},\{t_{k}\})\big)^{\ast }{\big\|,}
\end{equation*}%
which completes the proof of Theorem \ref{main-result-duality3}.
\end{proof}

Similarly as in Theorem \ref{main-result-duality2} we obtain.

\begin{theorem}
\label{main-result-duality2.1}Let $1<\theta \leq p<\infty $ and $1<q<\infty $%
. Let $\{t_{k}\}\in \dot{X}_{\alpha ,\sigma ,p}$ be a $p$-admissible weight
sequence with $\sigma =(\sigma _{1}=\theta \left( \frac{p}{\theta }\right)
^{\prime },\sigma _{2}\geq p)$.\textit{\ Then }%
\begin{equation*}
\big(\dot{F}_{p,q}(\mathbb{R}^{n},\{t_{k}\})\big)^{\ast }=\dot{F}_{p^{\prime
},q^{\prime }}(\mathbb{R}^{n},\{t_{k}^{-1}\}).
\end{equation*}%
In particular, if $g\in \dot{F}_{p^{\prime },q^{\prime }}(\mathbb{R}%
^{n},\{t_{k}^{-1}\})$, then the map, given by $l_{g}(f)=\langle f,g\rangle ,$
defined initially for $f\in \mathcal{S}_{\infty }(\mathbb{R}^{n})$ extends
to a continuous linear functional on $\dot{F}_{p,q}(\mathbb{R}%
^{n},\{t_{k}\}) $ with 
\begin{equation*}
{\big\|}g|\dot{F}_{p^{\prime },q^{\prime }}(\mathbb{R}^{n},\{t_{k}^{-1}\}){%
\big\|}\approx {\big\|}l_{g}|\big(\dot{F}_{p,q}(\mathbb{R}^{n},\{t_{k}\})%
\big)^{\ast }{\big\|}
\end{equation*}%
and every $l\in \big(\dot{F}_{p,q}(\mathbb{R}^{n},\{t_{k}\})\big)^{\ast }$
satisfies $l=l_{g}$ for some $g\in \dot{F}_{p^{\prime },q^{\prime }}(\mathbb{%
R}^{n},\{t_{k}^{-1}\})$.
\end{theorem}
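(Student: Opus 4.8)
The plan is to follow the scheme of the proof of Theorem \ref{main-result-duality2}, replacing the sequence-space duality of Theorem \ref{main-result-duality1} (the case $p=1$) by that of Theorem \ref{main-result-duality3}, and transferring everything to the function-space level through the $\varphi$-transform characterization of Theorem \ref{phi-tran1}. Throughout I choose $\psi=\varphi$ satisfying \eqref{Ass1}--\eqref{Ass3}, so that $T_{\varphi}\circ S_{\varphi}$ is the identity on $\dot{F}_{p,q}(\mathbb{R}^n,\{t_k\})$ by Theorem \ref{phi-tran1}, and I use that $\mathcal{S}_{\infty}(\mathbb{R}^n)$ is dense in $\dot{F}_{p,q}(\mathbb{R}^n,\{t_k\})$ (Theorem \ref{embeddings-S-inf}) to reduce all identities to test functions $f\in\mathcal{S}_{\infty}(\mathbb{R}^n)$.

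For the forward inclusion, given $g\in\dot{F}_{p',q'}(\mathbb{R}^n,\{t_k^{-1}\})$ and $f\in\mathcal{S}_{\infty}(\mathbb{R}^n)$, I would write $l_g(f)=\langle f,g\rangle=\langle S_{\varphi}f,S_{\varphi}g\rangle$ and apply the sequence-space pairing of Theorem \ref{main-result-duality3} to bound
\[
|l_g(f)|\le\big\|S_{\varphi}g\,|\,\dot{f}_{p',q'}(\mathbb{R}^n,\{t_k^{-1}\})\big\|\;\big\|S_{\varphi}f\,|\,\dot{f}_{p,q}(\mathbb{R}^n,\{t_k\})\big\|.
\]
Boundedness of $S_{\varphi}$ on both scales (Theorem \ref{phi-tran1}) then gives $|l_g(f)|\lesssim\|g\,|\,\dot{F}_{p',q'}(\mathbb{R}^n,\{t_k^{-1}\})\|\,\|f\,|\,\dot{F}_{p,q}(\mathbb{R}^n,\{t_k\})\|$, so $l_g$ extends by density to a continuous functional with $\|l_g\|\lesssim\|g\,|\,\dot{F}_{p',q'}(\mathbb{R}^n,\{t_k^{-1}\})\|$. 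For the converse I take $l\in\big(\dot{F}_{p,q}(\mathbb{R}^n,\{t_k\})\big)^{\ast}$ and set $l_1=l\circ T_{\varphi}\in\big(\dot{f}_{p,q}(\mathbb{R}^n,\{t_k\})\big)^{\ast}$, which by Theorem \ref{main-result-duality3} is represented by some $\lambda\in\dot{f}_{p',q'}(\mathbb{R}^n,\{t_k^{-1}\})$ with $\|\lambda\|\approx\|l_1\|\lesssim\|l\|$. Putting $g=T_{\psi}\lambda$, the boundedness of $T_{\psi}$ from Theorem \ref{phi-tran1} yields $g\in\dot{F}_{p',q'}(\mathbb{R}^n,\{t_k^{-1}\})$ with $\|g\|\lesssim\|\lambda\|$, and for $f\in\mathcal{S}_{\infty}(\mathbb{R}^n)$ the identity $l(f)=l(T_{\varphi}S_{\varphi}f)=l_1(S_{\varphi}f)=\langle S_{\varphi}f,\lambda\rangle=\langle f,T_{\psi}\lambda\rangle=l_g(f)$ identifies $l$ with $l_g$, and density finishes the matching of norms.

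The step I expect to be the genuine obstacle is the application of Theorem \ref{phi-tran1} to the conjugate space $\dot{F}_{p',q'}(\mathbb{R}^n,\{t_k^{-1}\})$, since that theorem requires $\{t_k^{-1}\}$ to lie in a Tyulenev class $\dot{X}_{\tilde{\alpha},\tilde{\sigma},p'}$ with $\tilde{\sigma}_1=\tilde{\theta}\left(\frac{p'}{\tilde{\theta}}\right)^{\prime}$ and $\tilde{\sigma}_2\ge p'$, and this membership must be derived from $\{t_k\}\in\dot{X}_{\alpha,\sigma,p}$. The mechanism is to pass, at each dyadic scale, between $L_p$-averages of $t_k$ and $L_{p'}$-averages of $t_k^{-1}$ via the $A_p$-duality of Lemma \ref{Ap-Property}/(i): indeed $\{t_k\}\in\dot{X}_{\alpha,\sigma,p}$ with $\sigma_1=\theta\left(\frac{p}{\theta}\right)^{\prime}$ forces $t_k^{p}\in A_{\frac{p}{\theta}}(\mathbb{R}^n)\subset A_p(\mathbb{R}^n)$, hence $t_k^{-p'}\in A_{p'}(\mathbb{R}^n)$ with a constant uniform in $k$. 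One then reindexes and combines the two cross-scale estimates \eqref{Asum1} and \eqref{Asum2} so that the roles of $t_k$ and $t_k^{-1}$, and correspondingly of $\alpha_1$ and $\alpha_2$, are interchanged (heuristically $\tilde{\alpha}_1=-\alpha_2$ and $\tilde{\alpha}_2=-\alpha_1$); the hypothesis $\sigma_2\ge p$ is what supplies the averaging exponent needed to secure $\tilde{\sigma}_2\ge p'$. Once $\{t_k^{-1}\}\in\dot{X}_{\tilde{\alpha},\tilde{\sigma},p'}$ is established, the remaining arguments are the routine transcription of the $p=1$ proof, and the equivalence of norms follows by combining the two displayed estimates.
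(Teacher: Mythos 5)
Your proof is correct and coincides with the paper's: the paper proves this theorem with the single remark that it follows ``similarly as in Theorem \ref{main-result-duality2}'', i.e., precisely your transcription in which the sequence-space duality of Theorem \ref{main-result-duality3} replaces that of Theorem \ref{main-result-duality1} and everything is transferred to the function spaces through Theorem \ref{phi-tran1} with $\psi=\varphi$, together with the density of $\mathcal{S}_{\infty}(\mathbb{R}^{n})$ from Theorem \ref{embeddings-S-inf}. Your closing paragraph --- deriving $\{t_{k}^{-1}\}\in\dot{X}_{\tilde{\alpha},\tilde{\sigma},p^{\prime}}$ from $\{t_{k}\}\in\dot{X}_{\alpha,\sigma,p}$ via the $A_{p}$-duality of Lemma \ref{Ap-Property}/(i), H\"{o}lder's bound $1\leq M_{Q,p}(t_{k})M_{Q,p^{\prime}}(t_{k}^{-1})$, and the interchange of \eqref{Asum1} and \eqref{Asum2} with $\tilde{\alpha}_{1}=-\alpha_{2}$, $\tilde{\alpha}_{2}=-\alpha_{1}$ --- addresses a point the paper leaves entirely implicit (in the $p=1$ case, Theorem \ref{main-result-duality2}, the corresponding membership of $\{t_{k}^{-1}\}$ is instead imposed as a separate hypothesis, while here the paper says nothing), so supplying it strengthens rather than deviates from the paper's argument.
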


\textbf{Acknowledgements. }This work was supported\ by the General Direction
of Higher Education and Training under\ Grant No. C00L03UN280120220004,
Algeria.

\end{document}